\newcommand{\eee}{{\rm e}}
\newcommand{\me}{\mathbb{E}}
\newcommand{\mn}{\mathbb{N}}
\newcommand{\mr}{\mathbb{R}}
\newcommand{\mz}{\mathbb{Z}}
\renewcommand{\P}{\mathbb{P}}
\DeclareMathOperator{\1}{\mathbbm{1}}
\newcommand{\E}{\mathbb{E}}
\newcommand{\Q}{\mathbb{Q}}
\newcommand{\var}{{\rm Var \,}}
\newcommand{\dd}{{\rm d}}
\newcommand{\mmp}{\mathbb{P}}
\newtheorem{thm}{Theorem}[section]
\newtheorem{lemma}[thm]{Lemma}
\newtheorem{cor}[thm]{Corollary}
\newtheorem{assertion}[thm]{Proposition}
\theoremstyle{definition}
\theoremstyle{remark}
\newtheorem{rem}[thm]{Remark}
\begin{document}
\title{Laws of the iterated and single logarithm for sums of independent indicators, with applications to the Ginibre point process and Karlin's occupancy scheme}\date{}
\author{Dariusz Buraczewski\footnote{Mathematical Institute, University of Wroclaw, Poland;\newline e-mail address: dariusz.buraczewski@math.uni.wroc.pl} \ \ Alexander Iksanov\footnote{Faculty of Computer Science and Cybernetics, Taras Shevchenko National University of Kyiv, Ukraine; e-mail address:
iksan@univ.kiev.ua} \ \ and \ \ Valeriya Kotelnikova\footnote{Faculty of Computer Science and Cybernetics, Taras Shevchenko National University of Kyiv, Ukraine and Mathematical Institute, University of Wroclaw, Poland; e-mail address: valeria.kotelnikova@unicyb.kiev.ua}}
\maketitle

\begin{abstract}
We prove a law of the iterated logarithm (LIL) for an infinite sum of independent indicators parameterized by $t$ and monotone in $t$ as $t\to\infty$. It is shown that if the expectation $b$ and the variance $a$ of the sum are comparable, then the normalization in the LIL includes the iterated logarithm of $a$. If the expectation grows faster than the variance, while the ratio $\log b/\log a$ remains bounded, then the normalization in the LIL includes the single logarithm of $a$ (so that the LIL becomes a law of the single logarithm). Applications of our result are given to the number of points of the infinite Ginibre point process in a disk and the number of occupied boxes and related quantities in Karlin's occupancy scheme.
\end{abstract}

\noindent Key words: Ginibre point process; independent indicators; infinite occupancy; law of the iterated logarithm

\noindent 2020 Mathematics Subject Classification: Primary:
60F15, 60G50
\hphantom{2020 Mathematics Subject Classification: } Secondary: 60C05, 60G55

\section{Main results}
The main purpose of the article is to prove a law of the iterated logarithm (LIL) for an infinite sum of independent indicators. Although this problem is interesting on its own, the present work has originally been motivated by an application of the aforementioned LIL to the number of points of the infinite Ginibre point process in a disk and the number of occupied boxes and related objects in Karlin's occupancy scheme.

Let $(A_1(t))_{t\geq 0}$, $(A_2(t))_{t\geq 0},\ldots$ be independent families of events defined on a common probability space $(\Omega, \mathcal{F}, \mmp)$. The events $A_k(t)$ and $A_k(s)$, with $t\neq s$, are dependent. Formally, this fact will be secured by assumption (A4) stated below. Put $X(t):=\sum_{k\geq 1}\1_{A_k(t)}$ and assume that, for each $t\geq 0$, $X(t)<\infty$ almost surely (a.s.), and moreover $b(t):=\me X(t)=\sum_{k\geq 1}\mmp (A_k(t))<\infty$. Then also $$a(t):={\rm Var}\,X(t)=\sum_{k\geq 1}\mmp(A_k(t))(1-\mmp(A_k(t)))\leq b(t)<\infty.$$ Actually, according to Lemma \ref{lem:ineq} below, $\me \eee^{\theta X(t)}<\infty$ for all $\theta>0$.

We start by formulating a central limit theorem for $X(t)$. The following assumption is of principal importance for all our results:

\begin{tcolorbox}[colback=green!5,colframe=green!40!black]
(A1) $\lim_{t\to\infty}a(t)=\infty$.
\end{tcolorbox}

\begin{assertion}\label{prop:clt}
Suppose (A1). Then $(X(t)-\me X(t))/({\rm Var}\,X(t))^{1/2}$ converges in distribution as $t\to\infty$ to a random variable with the standard normal distribution.
\end{assertion}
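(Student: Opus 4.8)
The plan is to work with characteristic functions: I will show that the characteristic function $\varphi_t$ of $(X(t)-\me X(t))/(\var X(t))^{1/2}$ converges pointwise to $u\mapsto\eee^{-u^2/2}$ as $t\to\infty$, and then invoke L\'evy's continuity theorem. Write $p_k(t):=\mmp(A_k(t))$, $\sigma(t):=a(t)^{1/2}$ and $\eta_k(t):=(\1_{A_k(t)}-p_k(t))/\sigma(t)$, so that $(X(t)-\me X(t))/\sigma(t)=\sum_{k\geq 1}\eta_k(t)$ is an a.s.\ convergent series of independent, centered random variables with $|\eta_k(t)|\leq 1/\sigma(t)$, $\me\eta_k(t)^2=p_k(t)(1-p_k(t))/a(t)$ and $\sum_{k\geq 1}\me\eta_k(t)^2=1$ (the last identity because $a(t)=\sum_{k\ge1}p_k(t)(1-p_k(t))$). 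Accordingly $\varphi_t(u)=\prod_{k\geq 1}\me\eee^{iu\eta_k(t)}$, a convergent infinite product for each fixed $t$.

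The main step is a one-term Taylor expansion of each factor. From $|\eee^{ix}-1-ix+x^2/2|\leq |x|^3/6$ and $\me\eta_k(t)=0$ one gets $\me\eee^{iu\eta_k(t)}=1-c_k(t)+\rho_k(t)$ with $c_k(t):=\tfrac{u^2}{2}\me\eta_k(t)^2$ and $|\rho_k(t)|\leq\tfrac{|u|^3}{6}\me|\eta_k(t)|^3\leq\tfrac{|u|^3}{6\sigma(t)}\me\eta_k(t)^2$; combining this with $0\leq\eee^{-c}-(1-c)\leq c^2/2$ for $c\ge0$ gives $|\me\eee^{iu\eta_k(t)}-\eee^{-c_k(t)}|\leq|\rho_k(t)|+\tfrac12 c_k(t)^2$ for each $k$. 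Since $0\leq c_k(t)\leq u^2/(8a(t))$, for all $t$ with $a(t)>u^2/8$ both $\me\eee^{iu\eta_k(t)}$ and $\eee^{-c_k(t)}$ lie in the closed unit disk, so the elementary bound $\bigl|\prod_{k=1}^N z_k-\prod_{k=1}^N w_k\bigr|\leq\sum_{k=1}^N|z_k-w_k|$ together with $\sum_{k\ge1}\me\eta_k(t)^2=1$ (hence $\sum_{k\ge1}c_k(t)=u^2/2$ and $\sum_{k\ge1}c_k(t)^2\le\tfrac{u^2}{8a(t)}\cdot\tfrac{u^2}{2}$) yields
$$\Bigl|\varphi_t(u)-\prod_{k\ge1}\eee^{-c_k(t)}\Bigr|\le\sum_{k\ge1}|\rho_k(t)|+\frac12\sum_{k\ge1}c_k(t)^2\le\frac{|u|^3}{6\sigma(t)}+\frac{u^4}{32a(t)}\xrightarrow[t\to\infty]{}0$$
by assumption (A1). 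As $\prod_{k\ge1}\eee^{-c_k(t)}=\exp(-\sum_{k\ge1}c_k(t))=\eee^{-u^2/2}$ for every such $t$, this gives $\varphi_t(u)\to\eee^{-u^2/2}$ for each fixed $u\in\mr$, and L\'evy's theorem completes the proof.

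The only delicate point is pure bookkeeping with the infinite products: each of the estimates above should be derived first for the finite partial products $\prod_{k=1}^N(\cdot)$ with bounds independent of $N$, and $N\to\infty$ taken afterwards; this is legitimate because, for fixed $t$, the product defining $\varphi_t(u)$ converges (it is the characteristic function of the a.s.\ finite $X(t)$) and so does $\prod_k\eee^{-c_k(t)}$ (as $\sum_k c_k(t)=u^2/2<\infty$). Apart from this, the argument is just the classical Lindeberg--Lyapunov scheme; I expect no genuine obstacle, since what makes it run cleanly — and what (A1) is precisely designed to provide — is that the summands $\1_{A_k(t)}-p_k(t)$ are bounded by $1$ while $\var X(t)\to\infty$, so the individual second moments $\me\eta_k(t)^2\le 1/(4a(t))$ are uniformly negligible. (Alternatively, one could truncate the series at a suitable level $N=N(t)$ and quote a Lindeberg central limit theorem for triangular arrays, but the route above is self-contained.)
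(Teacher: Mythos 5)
Your proof is correct. Where the paper simply invokes the Lindeberg--Feller theorem and observes that the Lindeberg condition holds trivially (the summands $\1_{A_k(t)}-\mmp(A_k(t))$ are bounded by $1$ while $a(t)\to\infty$, so the truncation indicators vanish for large $t$), you instead give a self-contained characteristic-function argument: a third-order Taylor expansion of each factor, the comparison $|\prod z_k-\prod w_k|\le\sum|z_k-w_k|$ for points of the closed unit disk, and the explicit error bound $\frac{|u|^3}{6(a(t))^{1/2}}+\frac{u^4}{32\,a(t)}\to 0$ under (A1). In substance this reproves the Lindeberg CLT in the special case at hand, and the mechanism is identical to the paper's — boundedness of the indicators against $\var X(t)\to\infty$ is exactly what kills both your remainder terms and the paper's Lindeberg integrals. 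What your route buys is that it handles the genuinely infinite sum and the continuous parameter $t$ directly and explicitly (the paper's one-line appeal to Lindeberg--Feller tacitly requires the standard extension of that theorem from finite triangular arrays to infinite rows, or a truncation in $k$), and it even yields a quantitative rate of convergence of the characteristic functions; the price is length, whereas the paper's citation-based proof is three lines. Your closing remark about passing from finite partial products to the infinite ones is handled correctly: the bounds are uniform in $N$ and both infinite products converge for fixed $t$, so the estimate survives the limit.
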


Our next result states that the central limit theorem is accompanied by the convergence of exponential moments of all orders.
\begin{assertion}\label{main:exp}
Suppose (A1). Then, for each $\theta \in\mr$,
\begin{equation*}\label{eq:main_statement}
\lim_{t\to\infty} \me \exp\Big(\theta \, \frac{X(t)-\me X(t)}{({\rm Var}\,X(t))^{1/2}}\Big)= \me \exp(\theta \, {\rm Normal}(0,1))=\exp(\theta^2/2),
\end{equation*}
where ${\rm Normal}(0,1)$ denotes a random variable with the standard normal distribution.

Also, for each $p>0$,
\begin{equation}\label{eq:sec_statement}
\lim_{t\to\infty} \me \Big(\frac{|X(t)-\me X(t)|}{({\rm Var}\,X(t))^{1/2}}\Big)^p= \me\, |{\rm Normal}(0,1)|^p=\frac{2^{p/2}\Gamma((p+1)/2)}{\pi^{1/2}},
\end{equation}
where $\Gamma$ is the Euler gamma function.
\end{assertion}

The proofs of Propositions \ref{prop:clt} and \ref{main:exp} will be given in Section \ref{sec:CLT}. These propositions are by no means new and only given as a precursor of the LIL to be stated below. Not intending to provide a detailed survey of results related to the two propositions, we only mention that a local central limit theorem for $X(t)$ and, more generally, the Edgeworth expansions for $\mmp\{X(t)=k\}$ as $t\to\infty$, which hold uniformly in $k\in\mn_0:=\{0,1,2,\ldots\}$, follow from Theorem 11.1 in \cite{Dolgopyat+Hafouta:2022};
a precise two-sided Berry-Esseen type estimate for $X(t)$ can be found in formula (17) of \cite{Mattner+Schulz:2018}; and an approximation of the distribution of $X(t)$ by two terms of the Edgeworth expansion is given in \cite{Volkova:1996}.

The central limit theorem given in Proposition \ref{prop:clt} suggests the form of a LIL to be stated next. To formulate it, we need a set of further assumptions (in addition to (A1)). As usual, $x(t)\sim y(t)$ as $t\to\infty$ will mean that $\lim_{t\to\infty}(x(t)/y(t))=1$.

\begin{tcolorbox}[colback=green!5,colframe=green!40!black]
	(A2) There exists a nondecreasing function $a_0$ such that $a(t)\sim a_0(t)$ as $t\to\infty$.
\end{tcolorbox}

\begin{tcolorbox}[colback=green!5,colframe=green!40!black]
	(A3) There exists $\mu^\ast\geq 1$ such that $b(t)=O((a(t))^{\mu^\ast})$ as $t\to\infty$.
\end{tcolorbox}
\noindent Necessarily, $\mu^\ast\geq 1$, for $a(t)\leq b(t)$ for $t>0$. Put
\begin{equation}\label{eq:infim}
	\mu:=\inf\{\mu^\ast\ge1: b(t)=O((a(t))^{\mu^\ast})\}.
\end{equation}
\begin{rem}
There are two situations: either the infimum in \eqref{eq:infim} is attained, that is, the infimum coincides with the minimum; or the infimum is not attained. The first situation occurs if, for instance, $b(t)~\sim~ c(a(t))^\beta$ for some $c>0$ and $\beta\geq 1$, in which case $\mu=\beta$. The second situation occurs if, for instance, $b(t)~\sim~ (a(t))^\beta f(t)$ for some $f$ diverging to $\infty$ and satisfying
$f(t)=o((a(t))^\varepsilon)$ for all $\varepsilon > 0$, in which case, again, $\mu=\beta$.
\end{rem}

We write that a function defined on $[0,\infty)$ has some property eventually, if it has the property on $[T,\infty)$ for some $T\geq 0$. Recall that a positive measurable function $\ell$, defined on some neighborhood of $\infty$, is called slowly varying at $\infty$, if $\lim_{t\to\infty}(\ell(\lambda t)/\ell(t))=1$ for all $\lambda>0$.

\begin{tcolorbox}[colback=green!5,colframe=green!40!black]
(A3 cont.) If $\mu=1$, we assume that either $b$ is eventually continuous
or $${\lim\inf}_{t\to\infty}(\log b(t-1)/\log b(t))>0$$ and that
\begin{equation}\label{eq:infim2}
	b(t)/a(t)=O(f_{q}(a(t))),\quad t\to\infty,
\end{equation}
where $f_{q}(t)=(\log t)^{q}\mathcal{L}(\log t)$ for some $q\geq 0$ with $\mathcal{L}$ slowly varying at $\infty$
and if $q>0$, $b(t)/a(t)\neq O(f_s(a(t)))$ for $s\in (0,q)$.
\end{tcolorbox}

\noindent In particular, if $q=0$ and $\mathcal{L}(t)$ converges as $t\to\infty$ to a positive constant, $b/a$ is a bounded function.

\begin{tcolorbox}[colback=green!5,colframe=green!40!black]
	(A4) For each $k\in\mn:=\{1,2,\ldots\}$ and $t>s>0$, $A_k(s)\subseteq A_k(t)$.
\end{tcolorbox}

\noindent In particular, $b$ is a nondecreasing function.

In view of (A1) and $a(t)\leq b(t)$ for $t>0$, we infer $\lim_{t\to\infty}b(t)=\infty$. For each $\varrho\in (0,1)$, put
\begin{equation}\label{eq:mutheta}
\mu_\varrho:=\mu+\varrho\quad \text{if}~\mu>1\quad\text{and}\quad q_\varrho:=q+\varrho \quad \text{if}~\mu=1.
\end{equation}
Assuming (A3), fix any $\kappa\in (0,1)$, any $\varrho\in (0,1)$ and put
\begin{equation}\label{tn}
t_n=t_n(\kappa, \mu):=\inf\{t>0: b(t)>v_n(\kappa,\mu)\}
\end{equation}
for $n\in\mn$, where $v_n(\kappa, 1)=v_n(\kappa, 1, q, \varrho)=\exp(n^{(1-\kappa)/(q_\varrho+1)})$ and $v_n(\kappa,\mu)=v_n(\kappa, \mu, \varrho)=n^{\mu_\varrho(1-\kappa)/(\mu_\varrho-1)}$ for $\mu>1$. Plainly, the sequence $(t_n)_{n\in\mn}$ is nondecreasing with $\lim_{n\to\infty} t_n=+\infty$.
\begin{rem}
In the situation that the infimum in \eqref{eq:infim} is attained, we could have defined $v_n(\kappa,\mu)$ and $t_n(\kappa, \mu)$ with $\mu$ replacing $\mu_\varrho$ if $\mu>1$ and $0$ replacing $q_\varrho$ if $\mu=1$. This possibility will become clear after an inspection of the proof. For the time being, this can be informally justified as follows. If the infimum is attained, then $b(t)\leq {\rm const}\,(a(t))^\mu$ for large\footnote{The phrase `for each $n$ ($t$) large enough' means that there exists $n_0\in\mn$ ($t_0>0$) and we take any $n\geq n_0$ ($t\geq t_0$). Analogously, `for each positive $\kappa$ sufficiently close to $0$' means that there exists $\kappa_0>0$ and we take any $\kappa\in (0,\kappa_0)$.} $t$,
whereas if the infimum is not attained and $\mu>1$ ($\mu=1$), then $b(t)\leq {\rm const}\,(a(t))^{\mu_\varrho}$ ($b(t)\leq (\log a(t))^{q_\varrho} a(t)$) for each $\varrho\in (0,1)$ and large $t$. Our formulation is intended to simplify the subsequent presentation and avoid dealing with multiple cases.
\end{rem}

\begin{tcolorbox}[colback=green!5,colframe=green!40!black]
(A5) For each positive $\kappa$ sufficiently close to $0$ and for each $n$ large enough, there exists $A>1$ and a partition $t_n=t_{0,\,n}<t_{1,\,n}<\ldots<t_{j,\,n}=t_{n+1}$ with $j=j_n$ satisfying $$1\leq b(t_{k,\,n})-b(t_{k-1,\,n})\leq A,\quad 1\leq k\leq j$$
and, for all $\varepsilon>0$, $\big(j_n\exp(-\varepsilon (a(t_n))^{1/2})\big)$ is a summable sequence.
\end{tcolorbox}

\begin{rem}\label{suff}
A sufficient condition for (A5) is that $b$ is eventually strictly increasing and eventually continuous. Indeed, one can then choose a partition that satisfies, for large $n$, $b(t_{k,\,n})-b(t_{k-1,\,n})=1$ for $k\in\mn$, $k\leq j-1$ and $b(t_{j,\,n})-b(t_{j-1,\,n})\in [1,2)$. As a consequence,
$$j_n=\lfloor v_{n+1}(\kappa, \mu)-v_n(\kappa,\mu)\rfloor,$$ and, by Lemma \ref{lem:aux1}(b) below, $j_n=o(a(t_n))$ as $n\to\infty$. The sequence $\big(j_n\exp(-\varepsilon (a(t_n))^{1/2})\big)$ is summable because $a(t_n)$ grows at least polynomially fast.
\end{rem}

Assuming (A1) and (A3), fix any $\gamma>0$ and put
\begin{equation}\label{eq:tau}
\tau_n=\tau_n(\gamma,\mu):=\inf\{t>0: a(t)>w_n(\gamma,\mu)\}
\end{equation}
for large $n\in\mn$ with $\mu$ as given in \eqref{eq:infim}. Here, with $q$ as given in \eqref{eq:infim2}, $w_n(\gamma, 1)=w_n(\gamma, 1, q)=\exp(n^{(1+\gamma)/(q+1)})$ if $\mu=1$ and $w_n(\gamma, \mu)=n^{(1+\gamma)/(\mu-1)}$ if $\mu>1$.

\ \smallskip Now we formulate our remaining assumptions.

\begin{tcolorbox}[colback=green!5,colframe=green!40!black]
	(B1) The function $a$ is eventually continuous or $\lim_{t\to\infty}(\log a(t-1)/\log a(t))=1$ if $\mu=1$ and $\lim_{t\to\infty}(a(t-1)/a(t))=1$ if $\mu>1$.
\end{tcolorbox}
\begin{tcolorbox}[colback=green!5,colframe=green!40!black]
	(B2.1)
	There exist $s_0>0$, $\varsigma_0>0$ and a family $(R_{\varsigma}(t))_{0<\varsigma < \varsigma_0, t>s_0}$ of sets of positive integers satisfying the following two conditions: for each $\gamma>0$ close to 0 and all $0<\varsigma < \varsigma_0$ there exists $n_0=n_0(\varsigma, \gamma)\in\mn$ such that the sets $R_\varsigma(\tau_{n_0}(\gamma,\mu))$, $R_\varsigma(\tau_{n_0+1}(\gamma,\mu)),\ldots$ are disjoint; and
	\begin{equation}\label{eq:onevar}
		\lim_{t\to\infty}\frac{{\rm Var}\Big(\sum_{k\in 	R_\varsigma(t)}\1_{A_k(t)}\Big)}{{\rm Var}\,X(t)}=1-\varsigma.
	\end{equation}
\end{tcolorbox}
\begin{tcolorbox}[colback=green!5,colframe=green!40!black]
	(B2.2)
	There exist $s_0>0$ and a family $(R_0(t))_{t>s_0}$ of sets of positive integers satisfying the following two conditions: for each $\gamma>0$ close to $0$ there exists $n_0=n_0(\gamma)\in\mn$ such that the sets $R_0(\tau_{n_0}(\gamma,\mu))$, $R_0(\tau_{n_0+1}(\gamma,\mu)),\ldots$ are disjoint; and
	\begin{equation}\label{eq:one}
		\lim_{t\to\infty}\frac{{\rm Var}\Big(\sum_{k\in R_0(t)}\1_{A_k(t)}\Big)}{{\rm Var}\,X(t)}=1.
	\end{equation}
\end{tcolorbox}

We are ready to state a LIL, which is the main result of the present article.
\begin{thm}\label{thm:main}
Suppose (A1)-(A5), (B1) and either (B2.1) or (B2.2). Then, with $\mu\geq 1$ and $q\geq 0$ as defined in \eqref{eq:infim} and \eqref{eq:infim2}, respectively, $$\limsup_{t\to\infty}\frac{X(t)-\me X(t)}{(2(q+1){\rm Var}\,X(t)\log\log
{\rm Var}\,X(t))^{1/2}}=1\quad\text{{\rm a.s.}}$$ if $\mu=1$ and $$\limsup_{t\to\infty}\frac{X(t)-\me X(t)}{(2(\mu-1){\rm Var}\,X(t)\log {\rm Var}\,X(t))^{1/2}}=1\quad\text{{\rm a.s.}}$$ if $\mu>1$.
\end{thm}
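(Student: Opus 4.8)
\emph{Plan.} The proof splits into the two inequalities $\limsup\le 1$ and $\limsup\ge 1$, each established for a fixed small $\varepsilon>0$ and then intersected over a sequence $\varepsilon_m\downarrow 0$; throughout, write $\psi(t)$ for the normalization in the statement, so that by (A1) and (A2) the function $\psi$ is, up to asymptotic equivalence, eventually nondecreasing and $\psi(t)\to\infty$. For the upper bound the basic tool is the Bennett-type estimate $\mathbb P(X(t)-b(t)\ge x)\le\exp(-a(t)h(x/a(t)))$ with $h(u)=(1+u)\log(1+u)-u$, valid for all $x\ge0$ (it follows, via Lemma~\ref{lem:ineq}, from the elementary inequality $\log(1+p(\eee^\theta-1))-\theta p\le p(1-p)(\eee^\theta-1-\theta)$ for a single indicator). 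First I would estimate $X-b$ at the milestones $t_n=t_n(\kappa,\mu)$: since $\psi(t_n)/a(t_n)\to0$, the estimate is in the Gaussian regime of $h$ and gives $\mathbb P(X(t_n)-b(t_n)\ge(1+\varepsilon)\psi(t_n))\le\exp\!\big(-(1+\varepsilon)^2(1+o(1))\psi(t_n)^2/(2a(t_n))\big)$, which equals $(\log a(t_n))^{-(1+\varepsilon)^2(q+1)(1+o(1))}$ if $\mu=1$ and $a(t_n)^{-(1+\varepsilon)^2(\mu-1)(1+o(1))}$ if $\mu>1$. Using (B1) together with the regularity provisions on $b$ to identify $\log a(t_n)$, resp. a power of $a(t_n)$, with $\log v_n$, these bounds become, up to an $o(1)$ in the exponent, $n^{-(1+\varepsilon)^2(q+1)(1-\kappa)/(q_\varrho+1)}$, resp. $n^{-(1+\varepsilon)^2(\mu-1)(1-\kappa)/(\mu_\varrho-1)}$; choosing $\kappa$ and $\varrho$ small enough (depending on $\varepsilon$) makes the exponent exceed $1$, so the series converges and the Borel--Cantelli lemma yields $X(t_n)-b(t_n)\le(1+\varepsilon)\psi(t_n)$ for all large $n$ a.s.

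Next I would control the oscillation of $X-b$ inside $[t_n,t_{n+1}]$. By (A4), for $t\in[t_{k-1,n},t_{k,n}]$ one has $X(t)-b(t)\le X(t_{k,n})-b(t_{k-1,n})\le (X(t_{k,n})-b(t_{k,n}))+A$, so with $S_k^{(n)}:=(X(t_{k,n})-b(t_{k,n}))-(X(t_n)-b(t_n))=\sum_{m=1}^k W_m^{(n)}$, where $W_m^{(n)}=\sum_i\big(\1_{A_i(t_{m,n})\setminus A_i(t_{m-1,n})}-\mathbb P(A_i(t_{m,n})\setminus A_i(t_{m-1,n}))\big)$ is a centred increment, one gets $\sup_{[t_n,t_{n+1}]}(X-b)\le(X(t_n)-b(t_n))+\max_{0\le k\le j_n}S_k^{(n)}+A$. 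The decisive point, forced by the choice of $v_n$ (and compatible with (A5)), is that ${\rm Var}\,S_{j_n}^{(n)}={\rm Var}(X(t_{n+1})-X(t_n))\le b(t_{n+1})-b(t_n)=o(\psi(t_n)^2)$: the oscillation is, in variance, negligible on the LIL scale. Since each $W_m^{(n)}$ is a nondecreasing function of the block $\{\1_{A_i(t_{m,n})\setminus A_i(t_{m-1,n})}\}_i$, the blocks for distinct $m$ are disjoint, each row $\{\1_{A_i(t_{m,n})\setminus A_i(t_{m-1,n})}\}_m$ is a one-hot (multinomial-type, hence negatively associated) vector, and the rows are independent in $i$, the sequence $(W_m^{(n)})_m$ is negatively associated. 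An Ottaviani/Lévy-type maximal inequality (the ``remainder is small'' step using Chebyshev and ${\rm Var}\,S_{j_n}^{(n)}=o(\psi(t_n)^2)$) then reduces $\mathbb P(\max_k S_k^{(n)}\ge\varepsilon\psi(t_n))$ to $2\,\mathbb P(S_{j_n}^{(n)}\ge\tfrac\varepsilon2\psi(t_n))$, and the Bennett estimate for the sum $S_{j_n}^{(n)}$, which is a sum of independent (in $i$) uniformly bounded summands of total variance $\le b(t_{n+1})-b(t_n)$, bounds the latter by $\exp(-c\,\varepsilon^2\psi(t_n)^2/(b(t_{n+1})-b(t_n)))$; since $\psi(t_n)^2/(b(t_{n+1})-b(t_n))$ grows like a positive power of $n$, this is super-summable. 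Borel--Cantelli gives $\max_k S_k^{(n)}\le\varepsilon\psi(t_n)$ eventually, whence $\sup_{[t_n,t_{n+1}]}(X-b)\le(1+2\varepsilon)\psi(t_n)+A\le(1+3\varepsilon)\psi(t_n)$ for large $n$; dividing by $\psi(t)\ge(1-o(1))\psi(t_n)$ on $[t_n,t_{n+1}]$ and letting $\varepsilon\downarrow0$ gives $\limsup\le1$.

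For the lower bound I would work along the sparse sequence $\tau_n=\tau_n(\gamma,\mu)$ and use (B21), with (B22) the formal specialization $\varsigma=0$. Put $\tilde Y_n:=\sum_{k\in R_\varsigma(\tau_n)}(\1_{A_k(\tau_n)}-\mathbb P(A_k(\tau_n)))$ and $Z_n:=(X(\tau_n)-b(\tau_n))-\tilde Y_n$; the $\tilde Y_n$ are independent (disjointness of the $R_\varsigma(\tau_n)$ plus independence of the families), with ${\rm Var}\,\tilde Y_n\sim(1-\varsigma)\,{\rm Var}\,X(\tau_n)\to\infty$ and ${\rm Var}\,Z_n\sim\varsigma\,{\rm Var}\,X(\tau_n)$. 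As $\tilde Y_n$ is a sum of independent uniformly bounded centred summands with variance tending to infinity, classical moderate deviations apply with $x_n:=(1-\varepsilon/2)\psi(\tau_n)/({\rm Var}\,\tilde Y_n)^{1/2}\to\infty$ (of order $(\log\log a(\tau_n))^{1/2}$ if $\mu=1$, $(\log a(\tau_n))^{1/2}$ if $\mu>1$, in particular $x_n=o(({\rm Var}\,\tilde Y_n)^{1/6})$), giving $\mathbb P(\tilde Y_n\ge(1-\varepsilon/2)\psi(\tau_n))=\bar\Phi(x_n)(1+o(1))\asymp x_n^{-1}\exp(-x_n^2/2)$; identifying $\log a(\tau_n)$, resp. a power of $a(\tau_n)$, with $\log w_n$ via (B1), this is, up to an $o(1)$ in the exponent and a $\log$-factor, $n^{-(1-\varepsilon/2)^2(1+\gamma)/(1-\varsigma)}$, and choosing $\gamma,\varsigma$ small makes the exponent $<1$, so $\sum_n\mathbb P(\tilde Y_n\ge(1-\varepsilon/2)\psi(\tau_n))=\infty$; by the second Borel--Cantelli lemma, $\tilde Y_n\ge(1-\varepsilon/2)\psi(\tau_n)$ infinitely often a.s. Meanwhile the Bennett estimate applied to $-Z_n$, whose variance is $\sim\varsigma\,{\rm Var}\,X(\tau_n)$, gives $\mathbb P(Z_n\le-\tfrac\varepsilon3\psi(\tau_n))\le n^{-c\varepsilon^2/\varsigma}$, summable for $\varsigma$ small, so $Z_n\ge-\tfrac\varepsilon3\psi(\tau_n)$ eventually a.s. Adding, $X(\tau_n)-b(\tau_n)\ge(1-\varepsilon)\psi(\tau_n)$ infinitely often, so $\limsup\ge1-\varepsilon$, and letting $\varepsilon\downarrow0$ gives $\limsup\ge1$.

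\emph{Main obstacle.} The delicate part is the oscillation control inside $[t_n,t_{n+1}]$ in the second paragraph: because $b$ may grow rapidly, the partition supplied by (A5) can have $j_n$ comparable to $b(t_{n+1})-b(t_n)$ pieces, so a crude union bound over sub-intervals is hopeless; one must pass to a genuine maximal inequality and exploit that the specific discretization $v_n$ makes ${\rm Var}\,S_{j_n}^{(n)}$ of strictly smaller order than the square of the LIL normalization. Setting up that maximal inequality correctly — verifying the negative association of the centred increments $W_m^{(n)}$ (or, equivalently, extracting the genuine independence hidden in the ``arrival pattern'' of the $A_i$), and then keeping track of the $o(1)$ terms in the exponents of all tail estimates so as not to spoil the borderline (non)summability of the relevant series — is where the bulk of the work lies; the remaining ingredients (Bennett's inequality, Borel--Cantelli, moderate deviations, and the bookkeeping of $\log a$ along $v_n$ and $w_n$ via (B1) and the regularity hypotheses) are standard.
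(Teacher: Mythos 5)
Your overall architecture coincides with the paper's: the upper half is proved along the milestones $t_n(\kappa,\mu)$ by an exponential (Bernstein/Bennett-type) bound plus Borel--Cantelli, followed by an oscillation estimate on $[t_n,t_{n+1}]$; the lower half is proved along $\tau_n(\gamma,\mu)$ by splitting $X^\ast(\tau_n)$ into the (B21)/(B22) block, which is independent across $n$ and handled by a sharp lower-tail estimate plus the second Borel--Cantelli lemma, and a complement whose downward fluctuations are controlled by a first Borel--Cantelli argument (the paper's Lemma \ref{lem:l1}). In the lower half your only real deviation is to import a Cram\'er--Petrov moderate-deviation asymptotic for sums of independent, uniformly bounded, non-identically distributed variables in the range $x_n=o(\sigma_n^{1/6})$, where the paper proves the needed bound $\P\{Z_{2,\,\varsigma}(t)>1-\delta\}\ge 3^{-1}\eee^{-(1-\delta^2/8)h_0(a(t))}$ by hand via exponential tilting (Lemma \ref{lem:new}); that substitution is legitimate, and your exponent bookkeeping (divergence of $\sum n^{-(1+\gamma)(1-\varepsilon/2)^2/(1-\varsigma)}$ for small $\gamma,\varsigma$, summability of the complement's $n^{-c\varepsilon^2/\varsigma}$) matches the paper's use of Lemma \ref{lem:equality} and (B1).

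The gap is in the upper half, exactly where you flag the main obstacle. Your one-sided reduction $X(t)-b(t)\le (X(t_{k,\,n})-b(t_{k,\,n}))+A$ is a nice simplification, but the passage from $\P(\max_{k\le j_n}S^{(n)}_k\ge\varepsilon\psi(t_n))$ to $2\,\P(S^{(n)}_{j_n}\ge\varepsilon\psi(t_n)/2)$ by an ``Ottaviani/L\'evy-type'' inequality is not justified: the classical Ottaviani--Skorokhod argument conditions on the first passage time and uses independence of the post-$k$ increments from that event, and your increments $W^{(n)}_m$ are not independent across $m$. Your negative-association claim for $(W^{(n)}_m)_m$ is itself correct (disjoint-event indicator rows are NA, independence across $k$ and monotone functions of disjoint blocks preserve NA), but NA yields correlation inequalities in the wrong direction for the Ottaviani argument (and the first-passage event is not monotone), so the decisive maximal inequality is asserted rather than proved. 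It could be repaired by invoking a genuine NA maximal/comparison result (e.g.\ Shao's comparison theorem, which dominates $\E f(\max_k S_k)$ by the independent case and yields Bernstein-type bounds for the maximum), but you neither cite nor prove one. Note that the paper avoids the issue entirely: Lemma \ref{bill} (Longnecker--Serfling) only requires the moment bound $\me(X^\ast(t_{m,\,n})-X^\ast(t_{l,\,n}))^{2r}\le C_r(b(t_{m,\,n})-b(t_{l,\,n}))^r$ of Lemma \ref{ineq1}, obtained from Rosenthal's inequality across the index of the indicators, where independence genuinely holds, with no structural claim about the increments in $m$; combined with Lemma \ref{lem:aux1}(b) this gives the required a.s.\ $o((a(t_n))^{1/2})$ oscillation. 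A minor further point: the identification of $\log a(t_n)$ (resp.\ a power of $a(t_n)$) with $\log v_n(\kappa,\mu)$ that you attribute to (B1) actually rests on (A3) and its continuation (the bound $b/a=O(f_q(a))$ and the clause on $\log b(t-1)/\log b(t)$); (B1) is needed only for the divergence of the lower-bound series along $\tau_n$.
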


Theorem \ref{thm:main} will be proved in Section \ref{sec:main}.

\begin{rem}
A perusal of the proof of Theorem \ref{thm:main} reveals that the corresponding lower limits in Theorem \ref{thm:main} are equal to $-1$ a.s.
\end{rem}

\section{A short overview of relevant literature}

For history up to 1985 of development of LILs for various stochastic models we refer to an excellent survey \cite{Bingham:1986}.

Now we provide an incomplete list of relatively recent articles, which are concerned with LILs for infinite sums of deterministically weighted independent and identically distributed random variables. Let $\xi_1$, $\xi_2,\ldots$ be independent copies of a random variable $\xi$ with $\me \xi=0$ and ${\rm Var}\,\xi\in (0,\infty)$. A LIL for a random geometric series $\sum_{k\geq 0}b^k\eta_{k+1}$ as $b\to 1-$ was proved in Theorem 1.1 of \cite{Bovier+Picco:1993} with the help of classical methods and in Theorem 2.1 of \cite{Zhang:1997} via a strong approximation argument. See also \cite{Fu+Huang:2016, Gao+Gao+Xia:2024, Stoica:2003} and Theorem 1(iii) in \cite{Kiesel:1996} for various extensions. A LIL for a random Dirichlet series $\sum_{k\geq 2}k^{-1/2-s}(\log k)^\alpha \xi_k$ as $s\to 0+$ was proved in \cite{Aymone+Frometa+Misturini:2020} in the case $\mmp\{\xi=\pm 1\}=1/2$ and $\alpha=0$ and in \cite{Buraczewski etal:2023} and \cite{Iksanov+Kostohryz:2025} in the cases $\alpha>-1/2$ and $\alpha=-1/2$, respectively. One may expect that the asymptotic behavior of the random Dirichlet series with $\alpha=-1/2$ mimics that of $\sum_p p^{-1/2-s}\xi_p$, where $\sum_p$ denotes the summation over the prime numbers.

The most relevant to our setting is the article \cite{Lai+Wei:1982}. It proves a LIL for $\sum_{i\in\mz}a_{n,i}\eta_i$ as $n\to\infty$, where $(a_{n,i})_{n\geq 1, i\in\mz}$ is a sequence of reals satisfying certain conditions, and $\ldots, \eta_{-1}$, $\eta_0$, $\eta_1,\ldots$ are independent random variables with $\me \eta_i=0$, ${\rm Var}\,\eta_i=\sigma^2\in (0,\infty)$ and $\sup_{i\in\mz}\,\me |\eta_i|^r<\infty$ for some $r>2$. We use in one fragment of our proof the argument worked out in that paper, see Remark \ref{rem:laiwei} for more details. Four LILs for $\sum_{i\geq 0}a_{n,i}\eta_i$ as $n\to\infty$, where the weights $(a_{n,i})_{n\geq 1, i\geq 0}$ relate to the four particular summability methods, can be found in Theorem 2 of~\cite{Bingham+Stadtmueller:1990}.

After the present manuscript has been submitted for publication, Iksanov and Kotelnikova published the article \cite{Iksanov+Kotelnikova:2024}, in which Theorem \ref{thm:main} was proved under less restrictive assumptions. Namely, the requirement of monotonicity in assumptions (A2) and (A4) was removed and replaced by a weaker condition. Although the argument given in \cite{Iksanov+Kotelnikova:2024} followed, for the most part, that of the present paper, the technical details were more complicated at places.

\section{An application to the Ginibre point processes}

Let $\mathbb{C}$ and ${\rm Leb}$ denote the set of complex numbers and the Lebesgue measure on $\mathbb{C}$, respectively. As usual, we write $\bar z$ for the complex conjugate of $z\in\mathbb{C}$. Let $\Xi$ be the infinite Ginibre point process on $\mathbb{C}$, that is, a determinantal point process with kernel $C(z,w)=\eee^{z\bar w}$ for $z,w\in \mathbb{C}$ with respect to the measure $\mu$ defined by $\mu({\rm d}z):=\pi^{-1}\eee^{-|z|^2}{\rm Leb}({\rm d}z)$ for $z\in \mathbb{C}$. This means that $\Xi$ is a simple point process such that, for any $k\in\mn$ and any mutually disjoint Borel subsets $B_1,\ldots, B_k$ of $\mathbb{C}$ $$\me \prod_{i=1}^k \Xi (B_i)=\int_{B_1\times\ldots\times B_k} {\rm det}(C(z_i, z_j))_{1\leq i,j\leq k}\,\mu({\rm d}z_1)\ldots\mu({\rm d}z_k).$$ See \cite{Hough+Krishnapur+Peres+Virag:2009} for detailed information on determinantal point processes and particularly Sections 4.3.7 and 4.7 for a discussion of the Ginibre point processes.

For each $t\geq 0$, let $\Xi(D_t)$ denote the number of points of $\Xi$ in the disk $D_t:=\{z\in \mathbb{C}: |z|<t^{1/2}\}$. According to an infinite version of Kostlan's result \cite{Kostlan:1992}, stated as Theorem 1.1 in \cite{Fenzl+Lambert:2021}, the process
\begin{equation}\label{eq:distr}
(\Xi(D_t))_{t\geq 0}~~\text{has the same distribution as}~~N=(N(t))_{t\geq 0}=\Big(\sum_{k\geq 1}\1_{\{\Gamma_k\leq t\}}\Big)_{t\geq 0},
\end{equation}
where $\Gamma_1$, $\Gamma_2,\ldots$ are independent random variables and $\Gamma_k$ has a gamma distribution with parameters $k$ and $1$, that is, $$\mmp\{\Gamma_k\in {\rm d}x\}=\frac{1}{(k-1)!}x^{k-1}\eee^{-x}\1_{(0,\infty)}(x){\rm d}x,\quad x\in\mr.$$

The process $N$ can be thought of as a decoupled version of a Poisson process $\pi$ on $[0,\infty)$ of unit intensity. The expectations of the two processes coincide $\me N(t)=\me \pi_t=t$ for $t\geq 0$, but their variances behave differently. It is shown in Proposition B.1 of \cite{Fenzl+Lambert:2021} that
\begin{equation}\label{eq:var11}
{\rm Var}\,N(t)\sim (t/\pi)^{1/2},\quad t\to\infty,
\end{equation}
whereas ${\rm Var}\,\pi_t=t$ for $t\geq 0$. In view of \eqref{eq:distr}, \eqref{eq:var11} and Proposition \ref{prop:clt}, $(t/\pi)^{-1/4}(\Xi(D_t)-t)$ converges in distribution to a random variable with the standard normal distribution. Weak convergence of finite-dimensional distributions of $(\Xi(D_{ut}))_{u\geq 0}$, properly normalized and centered, to those of a Gaussian white noise was proved in Proposition 1.3 of \cite{Fenzl+Lambert:2021}. A functional central limit theorem for $(\Xi(D_{u+t}))_{u\geq 0}$, properly normalized and centered, was obtained in Proposition 1.4 of the same paper.

An application of Theorem \ref{thm:main} gives, with some additional technical efforts, a LIL for $\Xi(D_t)$.
\begin{thm}\label{thm:Ginibre}
The number of points of $\Xi$ in the disk $D_t$ satisfies
$$\limsup_{t\to\infty}\frac{\Xi(D_t)-t}{t^{1/4}(\log t)^{1/2}}=\frac{1}{\pi^{1/4}}\quad\text{{\rm a.s.}}$$
\end{thm}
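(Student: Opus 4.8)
The plan is to derive Theorem~\ref{thm:Ginibre} from Theorem~\ref{thm:main}. By the distributional identity \eqref{eq:distr} it suffices to prove the claimed LIL for $N(t)=\sum_{k\geq 1}\1_{A_k(t)}$ with $A_k(t):=\{\Gamma_k\leq t\}$: the families $(A_k(t))_{t\geq 0}$ are independent (the $\Gamma_k$ being so), each $A_k(\cdot)$ is nondecreasing, $N(t)<\infty$ a.s.\ and $b(t)=\me N(t)=\me\pi_t=t$. By \eqref{eq:var11}, $a(t)={\rm Var}\,N(t)\sim(t/\pi)^{1/2}$ as $t\to\infty$, so $b(t)=O((a(t))^{\mu^\ast})$ holds precisely for $\mu^\ast\geq 2$; thus $\mu=2$ in \eqref{eq:infim} (the infimum being attained), we are in the regime $\mu>1$, and the parameter $q$ and condition \eqref{eq:infim2} are irrelevant. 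So I would (i) check (A1)--(A5), (B1) and (B22); (ii) invoke Theorem~\ref{thm:main}; (iii) rewrite its conclusion in terms of $t$.

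For step (i), most assumptions are immediate. (A1) holds since $a(t)\to\infty$; (A2) holds with the nondecreasing $a_0(t)=(t/\pi)^{1/2}$; (A3) was just verified with $\mu=2$; (A4) is clear. Since $b(t)=t$ is continuous and strictly increasing, Remark~\ref{suff} gives (A5). For (B1), note that $a(t-1)/a(t)\to 1$ because $a(s)\sim(s/\pi)^{1/2}$; alternatively $a$ is continuous, since each $t\mapsto\mmp(A_k(t))=\mmp\{\Gamma_k\leq t\}$ is continuous and the series $\sum_k\mmp(A_k(t))(1-\mmp(A_k(t)))$ converges locally uniformly (on $[0,T]$ its tail is bounded by $\sum_k\mmp\{\pi_T\geq k\}<\infty$).

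The real work is (B22). Put $p_k(t):=\mmp(A_k(t))=\mmp\{\pi_t\geq k\}$, so $p_k(t)(1-p_k(t))=\mmp\{\pi_t\geq k\}\mmp\{\pi_t\leq k-1\}$, and by independence ${\rm Var}\big(\sum_{k\in R}\1_{A_k(t)}\big)=\sum_{k\in R}p_k(t)(1-p_k(t))$ for every $R\subseteq\mn$, while ${\rm Var}\,N(t)=\sum_{k\geq 1}p_k(t)(1-p_k(t))$. I would take
$$R_0(t):=\{k\in\mn:\ |k-t|\leq t^{1/2}\log t\}.$$
Chernoff bounds for the Poisson law give, for large $t$ and some $c>0$, $p_k(t)\leq\exp(-c(\log t)^2)$ when $k\geq t+t^{1/2}\log t$ and $1-p_k(t)\leq\exp(-c(\log t)^2)$ when $k\leq t-t^{1/2}\log t$; summing these (and noting the extreme indices contribute even less) yields $\sum_{k\notin R_0(t)}p_k(t)(1-p_k(t))=o(t^{1/2})=o({\rm Var}\,N(t))$, which is \eqref{eq:one}. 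For the disjointness requirement, $\tau_n(\gamma,2)=\inf\{t:a(t)>n^{1+\gamma}\}\sim\pi n^{2(1+\gamma)}$ by \eqref{eq:var11}, so the gaps $\tau_{n+1}(\gamma,2)-\tau_n(\gamma,2)\asymp n^{1+2\gamma}$ dominate the diameter $\asymp n^{1+\gamma}\log n$ of the integer interval $R_0(\tau_n(\gamma,2))$; hence for each fixed $\gamma>0$ the sets $R_0(\tau_n(\gamma,2))$ are pairwise disjoint for all $n\geq n_0(\gamma)$. This is the step I expect to be the main obstacle: one must make the Poisson tail bounds quantitative enough to beat the order $t^{1/2}$ of the variance, and simultaneously keep track of the $\gamma$-dependent subsequence $(\tau_n(\gamma,2))$ so that the (widening) windows $R_0(\tau_n)$ separate.

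Having verified the hypotheses with $\mu=2$, Theorem~\ref{thm:main} gives $\limsup_{t\to\infty}(N(t)-t)/(2\,{\rm Var}\,N(t)\,\log{\rm Var}\,N(t))^{1/2}=1$ a.s. Since $2\,{\rm Var}\,N(t)\,\log{\rm Var}\,N(t)\sim(t/\pi)^{1/2}\log t$ by \eqref{eq:var11}, the normalising sequence is asymptotic to $\pi^{-1/4}t^{1/4}(\log t)^{1/2}$; as the two normalisations differ by a factor tending to the positive constant $\pi^{-1/4}$, the $\limsup$ rescales by that factor, giving $\limsup_{t\to\infty}(N(t)-t)/(t^{1/4}(\log t)^{1/2})=\pi^{-1/4}$ a.s. By \eqref{eq:distr} the same holds for $\Xi(D_t)$, completing the argument.
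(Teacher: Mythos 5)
Your overall route is sound and, at the variance step, genuinely lighter than the paper's. The paper verifies (B21): it takes windows $\{k:\lfloor t-xt^{1/2}\rfloor-1<k\le\lfloor t+xt^{1/2}\rfloor\}$ of width proportional to $t^{1/2}$ and computes the limiting variance ratio $1-\varsigma$ exactly, which requires the identity $\var N(t)=t\eee^{-2t}(I_0(2t)+I_1(2t))$, Stirling-type estimates and a CLT for the discrete Bessel distribution (Lemmas \ref{lem:cltY} and \ref{lem:B1}). You instead verify (B22) with the slightly wider windows $R_0(t)=\{k:|k-t|\le t^{1/2}\log t\}$ and kill the outside-the-window variance by Poisson (Chernoff) tail bounds via $\mmp(A_k(t))=\mmp\{\pi_t\ge k\}$; this part is correct (the lower range contributes at most $t\exp(-c(\log t)^2)$, and the upper range has a convergent tail sum), and it bypasses the Bessel machinery entirely. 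Your checks of (A1)--(A5), (B1), the identification $\mu=2$, and the final rescaling from $(2\var N(t)\log\var N(t))^{1/2}$ to $\pi^{-1/4}t^{1/4}(\log t)^{1/2}$ all agree with the paper.

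There is, however, one genuine gap, exactly at the step you yourself flagged: the disjointness of the sets $R_0(\tau_n(\gamma,2))$. You infer $\tau_{n+1}-\tau_n\asymp n^{1+2\gamma}$ from $\tau_n\sim\pi n^{2(1+\gamma)}$, which you obtain from \eqref{eq:var11} alone. But first-order asymptotics of a sequence do not control its increments: the error in $\tau_n\sim \pi n^{2(1+\gamma)}$ is only $o(n^{2+2\gamma})$, which can swamp the putative gap of order $n^{1+2\gamma}$, so from $\var N(t)\sim(t/\pi)^{1/2}$ and continuity alone the increments $\tau_{n+1}-\tau_n$ could fall below the window diameter $\asymp n^{1+\gamma}\log n$ infinitely often (and the crude route via $a(\tau_{n+1})-a(\tau_n)\sim(1+\gamma)n^{\gamma}$ together with a Lipschitz bound on $a$ only gives $\tau_{n+1}-\tau_n\gtrsim n^{\gamma}$, far too weak). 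This is precisely why the paper brings in the second-order expansion $\var N(t)=(t/\pi)^{1/2}-\tfrac{1}{16(\pi t)^{1/2}}+o(t^{-1/2})$ (from Proposition B.1 of Fenzl--Lambert and the Bessel asymptotics): it yields $\tau_n^{1/2}=\pi^{1/2}n^{1+\gamma}+O(1)$, hence $\tau_{n+1}-\tau_n\sim 2\pi(1+\gamma)n^{1+2\gamma}$, which comfortably dominates $\tau_n^{1/2}\log\tau_n+\tau_{n+1}^{1/2}\log\tau_{n+1}$. With such a refinement (or, alternatively, a bound of the type $\tfrac{\dd}{\dd t}\var N(t)=O(t^{-1/2})$) your disjointness claim holds and the whole argument closes; without it, the independence along $(\tau_n)$ needed for the lower half of the LIL is not established.
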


Theorem \ref{thm:Ginibre} will be deduced from Theorem \ref{thm:main} in Section \ref{sec:Ginibre}.

\section{An application to Karlin's occupancy scheme}

Let $(p_k)_{k\in\mn}$ be a discrete probability distribution with $p_k>0$ for infinitely many $k$. We are interested in an infinite balls-in-boxes scheme which is defined as follows. There are infinitely many balls and an infinite array of boxes $1$, $2,\ldots$. The balls are allocated, independently of each other, over the boxes with probability $p_k$ of hitting box $k$, $k\in\mn$. The scheme is commonly referred to as Karlin's occupancy scheme due to Karlin's article \cite{Karlin:1967}, which offered the first systematic investigation of the scheme. A survey of many results available for the scheme up to 2007 can be found in \cite{Gnedin+Hansen+Pitman:2007}.

In a {\it deterministic version} of Karlin's occupancy scheme the $n$th ball is thrown at time $n\in\mn$. For $j,n\in\mn$, denote by $\mathcal{K}_j(n)$ and $\mathcal{K}_j^\ast(n)$ the number of boxes containing at time $n$ at least $j$ balls and exactly $j$ balls, respectively. In particular, $\mathcal{K}_1(n)$ denotes the number of occupied boxes at time $n$.

Denote by $(S_k)_{k\in\mn}$ a standard random walk with independent increments having an exponential distribution of unit mean. Put $\pi(t):=\#\{k\in\mn: S_k\le t\}$ for $t\geq 0$, so that $\pi:=(\pi(t))_{t\ge 0}$ is a Poisson process on $[0,\infty)$ of unit intensity. It is assumed that $\pi$ is independent of random variables which represent the indices of boxes hit by the balls $1$, $2,\ldots$

In a {\it Poissonized version} of Karlin's occupancy scheme the $n$th ball is thrown at time $S_n$ for $n\in\mn$, so that there are $\pi(t)$ balls at time $t\geq 0$. For $j\in\mn$ and $t\ge 0$, denote by $K_j(t)$ and $K_j^*(t)$ the number of boxes at time $t$ in the Poissonized scheme containing at least $j$ balls and exactly $j$ balls, respectively.
In particular, $K_1(t)$ denotes the number of occupied boxes at time $t$. For $i\in\mn$, denote by $Y_i$ the index of a box that the $i$th ball falls into. Also, for $k\in\mn$ and $t\geq 0$, denote by $\pi_k(t)$ the number of balls which fall into the box $k$ in the Poissonized scheme at time $t$. Then $\pi_k(t)=\#\{i\le\pi(t):Y_i=k\}$ and $\pi(t)=\sum_{k\geq 1}\pi_k(t)$ for all $t\geq 0$. The reason behind introducing the {\it Poissonized version} is the thinning property of Poisson processes. Since the variables $Y_1$, $Y_2,\ldots$ are independent, and also independent of $\pi$, the property ensures that the processes $(\pi_1(t))_{t\geq 0}$, $(\pi_2(t))_{t\geq 0},\ldots$ are independent, and $(\pi_k(t))_{t\geq 0}$ is a Poisson process of intensity $p_k$. Thus, the numbers of balls which hit distinct boxes are independent, and
for each $j\in\mn$, the variable $K_j(t)$ is an infinite sum of independent indicators. This is in contrast with the deterministic version, in which the aforementioned independence is absent. This justifies a common approach which is exploited when analyzing the deterministic version. First, the scheme is Poissonized. Then the result in focus, or rather its counterpart, is proved for the Poissonized scheme. Finally, a transition, called de-Poissonization, is made from the Poissonized to the deterministic scheme. As far as the quantities introduced above are concerned, a relative simplicity of the approach is secured by the equalities $K_j(t)=\mathcal{K}_{j}(\pi(t))$ for $t\geq 0$ a.s. and $K_{j}(S_n)=\mathcal{K}_j(n)$ for $n\in\mn$ a.s. The latter equality is not as useful as the former, for the process $(K_j(t))_{t\geq 0}$ and the variable $S_n$ are dependent.

Put $$\rho(t):=\#\{k\in\mn: 1/p_k\le t\},\quad t>0,$$ so that $\rho$ is the counting function of the sequence $(1/p_k)_{k\in\mn}$. Observe that $\rho(t)=0$ for $t\in(0,1]$. Typically, assumptions on the distribution $(p_k)_{k\in\mn}$ are formulated in terms of $\rho$ rather than the distribution itself. Regular variation of $\rho$ at $\infty$ of index $\alpha\in [0,1]$ is a standard condition, sometimes referred to as Karlin's condition. This means that $\rho(t)\sim t^\alpha L(t)$ as $t\to\infty$ for some $L$ slowly varying at $\infty$. Comprehensive information about slowly and regularly varying functions can be found in Section 1 of \cite{Bingham+Goldie+Teugels:1989}. The Karlin scheme exhibits different behaviors for $\alpha=0$, $\alpha\in(0,1)$ and $\alpha=1$. In view of this the three cases are treated separately in Theorems \ref{thm:Karlin0}, \ref{thm:Karlin} and \ref{thm:Karlin1} (all the theorems will be proved by an application of Theorem \ref{thm:main}).

To be more precise, Theorem \ref{thm:Karlin0} is concerned with the situation in which $\rho$ belongs to a proper subclass of slowly varying functions. We say that $\rho$ belongs to de Haan’s class $\Pi$ with the auxiliary function $\ell$ (notation $\rho\in\Pi$ or $\rho\in\Pi_\ell$) if, for all $\lambda>0$,
\begin{equation}\label{eq:deHaan}
\lim_{t\to\infty}\frac{\rho(\lambda t)-\rho(t)}{\ell(t)}=\log \lambda.
\end{equation}
Detailed information about the class $\Pi$ can be found in Section 3 of \cite{Bingham+Goldie+Teugels:1989} and in \cite{Geluk+deHaan:1987}. In particular, according to Theorem 3.7.4 in \cite{Bingham+Goldie+Teugels:1989}, $\Pi$ is a proper subclass of the class of slowly varying functions. We note in passing that the definition of a general function belonging to the class $\Pi$ does not require the precise growth rate imposed on $\ell$, nor even divergence of $\ell$ to $\infty$. For instance, $t\mapsto \log t$ is a function of the class $\Pi$ with the auxiliary function $\ell\equiv 1$.
We shall use the notation $\rho\in \Pi_{\ell,\,\infty}$ if $\rho$ belongs to the de Haan class $\Pi$ with the auxiliary function $\ell$ satisfying $\lim_{t\to\infty}\ell(t)=\infty$.

\begin{thm}\label{thm:Karlin0}
Assume that $\rho\in\Pi_\ell$. If $\ell$ in \eqref{eq:deHaan} satisfies
\begin{equation}\label{eq:slowly}
\ell(t)~\sim~ (\log t)^\beta l(\log t),\quad t\to\infty
\end{equation}
for some $\beta>0$ and $l$ slowly varying at $\infty$, then, for each $j\in\mn$,
\begin{equation}\label{eq:LILkar}
\limsup_{t\to\infty}\frac{K_j(t)-\me K_j(t)}{({\rm Var}\,K_j(t)\log {\rm Var}\,K_j(t))^{1/2}}=\Big(\frac{2}{\beta}\Big)^{1/2}\quad\text{{\rm a.s.}}
\end{equation}
If $\ell$ in \eqref{eq:deHaan} satisfies
\begin{equation}\label{eq:slowly2}
\ell(t)~\sim~\exp(\sigma(\log t)^\lambda),\quad t\to\infty
\end{equation}
for some $\sigma>0$ and $\lambda\in (0,1)$, then, for each $j\in\mn$,
\begin{equation}\label{eq:LILkar1}
\limsup_{t\to\infty}\frac{K_j(t)-\me K_j(t)}{({\rm Var}\,K_j(t)\log\log {\rm Var}\,K_j(t))^{1/2}}=\Big(\frac{2}{\lambda}\Big)^{1/2}\quad\text{{\rm a.s.}}
\end{equation}
If $\rho\in \Pi_{\ell,\,\infty}$ (in particular, in both cases discussed above), then
\begin{equation*}
\me K_j(t)~\sim~ \rho(t),\quad t\to\infty
\end{equation*}
and
\begin{equation*}
{\rm Var}\,K_j(t)~\sim~ \Big(\log 2- \sum_{k=1}^{j-1}\frac{(2k-1)!}{(k!)^2 2^{2k}}\Big)\ell(t),\quad t\to\infty.
\end{equation*}
\end{thm}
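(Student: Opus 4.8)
The plan is to verify the hypotheses of Theorem \ref{thm:main} for $X(t)=K_j(t)=\sum_{k\geq 1}\1_{A_k(t)}$, where $A_k(t)$ is the event that box $k$ contains at least $j$ balls at time $t$ in the Poissonized scheme; since by the thinning property the numbers of balls in distinct boxes are independent Poisson$(tp_k)$ variables, $\mmp(A_k(t))=\sum_{i\geq j}\eee^{-tp_k}(tp_k)^i/i!=:f_j(tp_k)$. First I would compute the asymptotics of $b(t)=\me K_j(t)=\sum_k f_j(tp_k)$ and $a(t)={\rm Var}\,K_j(t)=\sum_k f_j(tp_k)(1-f_j(tp_k))$. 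Using $\rho\in\Pi_{\ell,\infty}$ and standard Abelian theorems for de Haan functions (Section 3 of \cite{Bingham+Goldie+Teugels:1989}), the sum $\sum_k g(tp_k)$ for suitable kernels $g$ is governed by integrals of the form $\int_0^\infty g(t/s)\,\dd\rho(s)$; choosing $g=f_j$ yields $b(t)\sim\rho(t)$ (since $f_j(x)\to 1$) and $g=f_j(1-f_j)$ yields $a(t)\sim(\log 2-\sum_{k=1}^{j-1}(2k-1)!/((k!)^2 2^{2k}))\ell(t)$, the constant being $\int_0^\infty f_j(x)(1-f_j(x))\,x^{-1}\,\dd x$ evaluated via the gamma-integral identity $\int_0^\infty\eee^{-2x}x^{2k-1}/((k-1)!)^2\,\dd x=(2k-1)!/((k-1)!)^2 2^{2k}$. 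This simultaneously establishes the two displayed asymptotic relations at the end of the theorem and shows (A1) holds, because $\ell(t)\to\infty$.

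Next I would identify $\mu$ and $q$. Since $b(t)\sim\rho(t)$ is slowly varying and $a(t)\sim c\,\ell(t)\to\infty$, the ratio $b(t)/a(t)$ is (asymptotically) a slowly varying function divided by $\ell(t)$, hence $b(t)=O((a(t))^{\mu^\ast})$ for every $\mu^\ast>1$ and we are in the case $\mu=1$. To pin down $q$ via \eqref{eq:infim2} I need to express $b(t)/a(t)$ as a function of $a(t)$: writing $u=a(t)$, we have (up to constants) $\rho(t)$ versus $\ell(t)$, and using the de Haan representation $\rho(t)=\rho(t_0)+\int_{t_0}^t\ell(s)s^{-1}\dd s$ one compares $\rho(t)$ with $\ell(t)$. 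In the first regime \eqref{eq:slowly}, $\ell(t)\sim(\log t)^\beta l(\log t)$ gives $\rho(t)\sim(\log t)^{\beta+1}l(\log t)/(\beta+1)$, so $b(t)/a(t)\sim\rho(t)/(c\ell(t))\sim(\log t)/(c(\beta+1))$, while $a(t)\sim c(\log t)^\beta l(\log t)$ forces $\log a(t)\sim\beta\log\log t$, i.e.\ $\log t\sim(\log a(t))^{1/\beta}$ up to slowly varying corrections; hence $b(t)/a(t)\asymp(\log a(t))^{1/\beta}\cdot(\text{slowly varying in }\log a(t))$, giving $q=1/\beta$ in \eqref{eq:infim2}. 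Then $2(q+1)=2(1/\beta+1)$, and since here ${\rm Var}\,K_j(t)=a(t)$, the normalization $(2(q+1)a(t)\log\log a(t))^{1/2}$ should be reconciled with the claimed $((2/\beta)\,a(t)\log a(t))^{1/2}$: because $\log\log a(t)\sim\log((\log t)^\beta l(\log t))\sim\beta\log\log t$ whereas $\log a(t)$ — wait, one must be careful here, and this reconciliation of the two forms of normalization is where the bulk of the technical work lies; the correct reading is that in Theorem \ref{thm:main} the argument of the double logarithm is ${\rm Var}\,X(t)=a(t)$, and $(q+1)\log\log a(t)$ must be shown equal, asymptotically, to $(1/\beta)\log a(t)$ up to a factor that gets absorbed — in fact $(1/\beta+1)\log\log a(t)$ is NOT asymptotic to $(1/\beta)\log a(t)$, so the honest route is: apply Theorem \ref{thm:main} to get $\limsup=(2(q+1))^{1/2}$ against $(a(t)\log\log a(t))^{1/2}$, then convert using $\log\log a(t)\sim\beta\log\log t$ and $\log a(t)$ relating to $\log t$; in the stated form \eqref{eq:LILkar} the denominator uses $\log{\rm Var}\,K_j(t)=\log a(t)$ rather than $\log\log$, which matches the $\mu>1$ branch of Theorem \ref{thm:main}, so I should re-examine whether this regime actually lands in $\mu>1$ — it does not, $b$ is slowly varying — meaning \eqref{eq:LILkar} must follow from the $\mu=1$ branch after rewriting $\log\log a(t)$ in terms of $\log a(t)$ is impossible, hence the resolution is that $a(t)=\ell(t)$ here is itself only logarithmically large relative to the \emph{real} time scale, and one applies Theorem \ref{thm:main} with the roles as stated, then translates via the explicit relation between $\ell(t)$, $\rho(t)$, $\log t$. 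For regime \eqref{eq:slowly2}, $\ell(t)\sim\exp(\sigma(\log t)^\lambda)$ gives $\log a(t)\sim\sigma(\log t)^\lambda$, so $\log\log a(t)\sim\lambda\log\log t$, and $\rho(t)\sim\int^t\ell/s\,\dd s$ is still $\asymp\ell(t)(\log t)^{1-\lambda}/\sigma\lambda$ by Laplace-type asymptotics, so $b/a\asymp(\log t)^{1-\lambda}$ which as a function of $a$ is $\asymp(\log a(t))^{(1-\lambda)/\lambda}$, giving $q=(1-\lambda)/\lambda$ and $q+1=1/\lambda$, hence $(2(q+1))^{1/2}=(2/\lambda)^{1/2}$, matching \eqref{eq:LILkar1} with the double-logarithm normalization exactly.

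The remaining assumptions are checked as follows. Assumption (A2) holds with $a_0$ a nondecreasing asymptotic version of $c\,\ell(t)$ (de Haan functions are asymptotic to nondecreasing ones). Assumption (A4) is immediate since $A_k(s)\subseteq A_k(t)$ for $s<t$ (the number of balls in a box is nondecreasing in time in the Poissonized scheme, as $\pi$ is). Assumption (B1) and the continuity part of (A3 cont.) hold because $b(t)=\sum_k f_j(tp_k)$ is continuous and strictly increasing, whence also (A5) holds via Remark \ref{suff}. For (B22) I would take $R_0(t)=\{k: p_k^{-1}\leq\varphi(t)\}$ for a slowly growing cutoff $\varphi(t)\to\infty$ with $\varphi(t)=o(t)$, chosen so that the truncated variance $\sum_{k\in R_0(t)}f_j(tp_k)(1-f_j(tp_k))$ still captures asymptotically all of $a(t)$ — this works because the dominant contribution to $a(t)$ comes from boxes with $tp_k$ of moderate size, i.e.\ $p_k^{-1}$ comparable to but smaller than $t$, and the de Haan property lets one show the tail $p_k^{-1}>\varphi(t)$ contributes $o(\ell(t))$ when $\varphi(t)/t\to 0$ slowly — while disjointness along the sparse subsequence $\tau_n(\gamma,\mu)$ follows since the cutoffs $\varphi(\tau_n)$ are eventually separated (one can also pass to $R_\varsigma$ of (B21) with an annular choice if exact disjointness is easier there). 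The main obstacle is precisely the bookkeeping in the previous paragraph: translating the LIL normalization of Theorem \ref{thm:main}, which is expressed intrinsically through ${\rm Var}\,K_j(t)$ and its iterated logarithm, into the externally stated form involving $\log{\rm Var}\,K_j(t)$ versus $\log\log{\rm Var}\,K_j(t)$, and correctly matching the constants $(2/\beta)^{1/2}$ and $(2/\lambda)^{1/2}$ — this requires carefully disentangling the three intertwined scales $t$, $\ell(t)\asymp a(t)$, and $\rho(t)\asymp b(t)$, and verifying that $q$ as defined by \eqref{eq:infim2} indeed equals $1/\beta$ (resp.\ $(1-\lambda)/\lambda$) including the optimality clause that $b/a\neq O(f_s(a))$ for $s<q$, which amounts to showing the slowly varying correction $l$ (resp.\ the absence of one) does not shift the polynomial exponent.
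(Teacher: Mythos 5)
Your overall strategy (apply Theorem \ref{thm:main} to $K_j(t)$ after computing $\me K_j(t)\sim\rho(t)$ and ${\rm Var}\,K_j(t)\sim c\,\ell(t)$, then identify $\mu$, $q$ and check (A1)--(A5), (B1), (B22)) is the paper's strategy, and your treatment of the regime \eqref{eq:slowly2} is correct ($\mu=1$, $q=1/\lambda-1$, so $2(q+1)=2/\lambda$). But the regime \eqref{eq:slowly} is broken in your write-up. Your inference ``$b(t)\sim\rho(t)$ is slowly varying, hence $b(t)=O((a(t))^{\mu^\ast})$ for every $\mu^\ast>1$ and $\mu=1$'' misreads the definition \eqref{eq:infim}: $\mu$ compares $b$ to powers of $a$, and $a$ is itself slowly varying in $t$ here. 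Under \eqref{eq:slowly} one has, by Lemma \ref{eq:thm21}(a), $b(t)\sim\rho(t)\sim(\beta+1)^{-1}(\log t)^{\beta+1}l(\log t)$ while $a(t)\asymp(\log t)^{\beta}l(\log t)$, so $b(t)\asymp (a(t))^{(\beta+1)/\beta}$ up to factors slowly varying in $\log t$, i.e.\ $\mu=1+1/\beta>1$. The theorem then follows from the $\mu>1$ branch of Theorem \ref{thm:main}, whose constant $(2(\mu-1))^{1/2}=(2/\beta)^{1/2}$ and single-logarithm normalization match \eqref{eq:LILkar} exactly. You noticed the mismatch between your $\mu=1$, $q=1/\beta$ computation (which would give $2(q+1)=2(1+1/\beta)$ against a $\log\log$ normalization) and the stated result, but your attempted resolution (``translate via the relation between $\ell$, $\rho$, $\log t$'') cannot work, since $\log\log a(t)=o(\log a(t))$; the only correct fix is the recomputation of $\mu$ just described, and without it the first half of the theorem is unproved.

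A second genuine defect is your choice of $R_0(t)=\{k: 1/p_k\le\varphi(t)\}$ with $\varphi(t)=o(t)$ for (B22). Boxes with $1/p_k\le\varphi(t)$ have $tp_k\ge t/\varphi(t)\to\infty$, hence are occupied by at least $j$ balls with probability exponentially close to $1$; their total variance contribution is $o(\ell(t))$, not $\sim{\rm Var}\,K_j(t)$, so \eqref{eq:one} fails — indeed your own remark that the dominant contribution comes from $1/p_k$ comparable to $t$ contradicts a cutoff that is $o(t)$. Moreover, one-sided cutoff sets evaluated along $\tau_{n_0},\tau_{n_0+1},\ldots$ are nested, never disjoint, so the first part of (B22) also fails; separation of the cutoffs $\varphi(\tau_n)$ does not help. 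The paper instead takes the annulus $R_0(t)=\{k: t/\log t<1/p_k\le t\log t\}$, proves via Lemma \ref{lem:karl} that the variance outside such an annulus is $o(\ell(t))$, and obtains disjointness from the superexponential growth of $\tau_n$ (here $\tau_n$ grows like $\exp(n^{1+\gamma})$ up to constants in the exponent) against the merely logarithmic width of the annuli. Your parenthetical about an ``annular choice'' points in the right direction, but as written the verification of (B22) does not go through. The moment asymptotics (first paragraph of your proposal) are fine in outline; the paper obtains them by citing Karlin and Iksanov--Kotelnikova rather than redoing the Abelian argument.
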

\begin{rem}\label{rem:deHaan}

For the time being, working in the present setting with slowly varying $\rho\notin \Pi$ is a challenging task, for even the large-time asymptotics of $t\mapsto\var K_1(t)$ is not known.
\end{rem}

\begin{thm}\label{thm:Karlin}
Assume that, for some $\alpha\in (0,1)$ and some $L$ slowly varying at $+\infty$,
\begin{equation*}
\rho(t)~\sim~ t^\alpha L(t),\quad t\to\infty.
\end{equation*}
Then, for each $j\in\mn$,
\begin{equation}\label{eq:LILkar2}
\limsup_{t\to\infty}\frac{K_j(t)-\me K_j(t)}{({\rm Var}\,K_j(t)\log\log {\rm Var}\,K_j(t))^{1/2}}=2^{1/2}\quad\text{{\rm a.s.}},
\end{equation}
\begin{equation*}
\me K_j(t)~\sim~ \frac{\Gamma(j-\alpha)}{(j-1)!}t^\alpha L(t)\quad\text{and}\quad {\rm Var}\,K_j(t)~\sim~ c_j t^\alpha L(t),\quad t\to\infty,
\end{equation*}
where $\Gamma$ is the Euler gamma function and
\begin{equation*}
c_j:=\frac{1}{(j-1)!}\Big(\frac{1}{2^{j+1-\alpha}}\sum_{i=0}^{j-1}\frac{\Gamma(i+j-\alpha)}{i!2^i}- \Gamma(j-\alpha)\Big)>0.
\end{equation*}
\end{thm}

\begin{thm}\label{thm:Karlin1}
Assume that, for some $L$ slowly varying at $+\infty$,
\begin{equation*}\label{eqassump1}
\rho(t)~\sim~ tL(t),\quad t\to\infty.
\end{equation*}
Then, for each $j\geq 2$, relation \eqref{eq:LILkar2} holds,
\begin{equation*}
\me K_j(t)~\sim~ \frac{1}{j-1}tL(t)\quad \text{and}\quad {\rm Var}\,K_j(t)~\sim~ \frac{(2j-3)!}{((j-1)!)^2 2^{2j-3}}tL(t),\quad t\to\infty.
\end{equation*}

\noindent If $j=1$ and, for each small enough $\gamma>0$,
\begin{equation}\label{eq:exotic}
\lim_{n\to\infty}\frac{\hat L(\exp((n+1)^{1+\gamma}))}{\hat L(\exp(n^{1+\gamma}))}=0,
\end{equation}
where $\hat L(t):=\int_t^\infty y^{-1}L(y){\rm d}y$ is well-defined for large $t$, then relation \eqref{eq:LILkar2} holds with $j=1$. If \eqref{eq:exotic} does not hold, then
\begin{equation}\label{eq:LILkar200}
\limsup_{t\to\infty}\frac{K_1(t)-\me K_1(t)}{({\rm Var}\,K_1(t)\log\log {\rm Var}\,K_1(t))^{1/2}}\leq 2^{1/2}\quad\text{{\rm a.s.}}
\end{equation}
In any event $${\rm Var}\,K_1(t)~\sim~ \me K_1(t)~\sim~ t \hat L(t),\quad t\to\infty.$$
\end{thm}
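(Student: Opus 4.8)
The plan is to apply Theorem~\ref{thm:main} to $X(t)=K_j(t)$ in the Poissonized scheme. By the thinning property of $\pi$ the number of balls having hit box $k$ by time $t$ is Poisson with mean $p_kt$, and these counts are independent over $k$, so $K_j(t)=\sum_{k\ge1}\1_{A_k(t)}$ with $A_k(t)$ the event that box $k$ contains at least $j$ balls at time $t$, whence $\mmp(A_k(t))=\mmp(\mathrm{Poisson}(p_kt)\ge j)$; (A4) holds because the number of balls in a box is nondecreasing in time. As a preliminary step I would derive the stated asymptotics of $b(t)=\me K_j(t)=\int_{(0,\infty)}\mmp(\mathrm{Poisson}(t/y)\ge j)\,{\rm d}\rho(y)$ and of $a(t)=\var K_j(t)=\sum_kg_j(p_kt)$, where $g_j(u):=\mmp(\mathrm{Poisson}(u)\ge j)\big(1-\mmp(\mathrm{Poisson}(u)\ge j)\big)$, by the Abelian estimates for $\rho$ regularly varying of index $1$ that are standard in the occupancy literature; this gives $b(t)\sim\tfrac1{j-1}tL(t)$, $a(t)\sim c_jtL(t)$ for $j\ge2$ and $b(t)\sim a(t)\sim t\hat L(t)$ for $j=1$, unconditionally. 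In every case $b(t)=O(a(t))$, so $\mu=1$ in \eqref{eq:infim}, and $b(t)/a(t)$ is bounded, so $q=0$ in \eqref{eq:infim2}; this is what produces the factor $(2(q+1))^{1/2}=2^{1/2}$ and the $\log\log\var K_j(t)$ in \eqref{eq:LILkar2}.

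Next I would verify the remaining hypotheses of Theorem~\ref{thm:main}, none of which depends on $j$ or on \eqref{eq:exotic}. Assumption (A1) holds since $t\mapsto tL(t)$ and $t\mapsto t\hat L(t)$ are regularly varying of index $1$, and (A2) holds because such a function is asymptotically equivalent to a nondecreasing one. For (A3) we already have $\mu=1$, $q=0$; moreover $b=\me K_j$ is continuous and strictly increasing (each summand is), so the continuity alternatives in (A3 cont.) and in (B1) apply once one checks that $a(t)=\sum_kg_j(p_kt)$ is continuous, which follows from locally uniform convergence of the series (use $g_j(u)\le u^j/j!$ and $\sum_kp_k=1$). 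Since $b$ is continuous and strictly increasing, (A5) follows from Remark~\ref{suff}, the required summability holding because $a(t_n)\asymp v_n(\kappa,1)=\exp(n^{(1-\kappa)/(1+\varrho)})$ grows faster than any polynomial in $n$ (as $q=0$ and $a\asymp b$ here) while $\log j_n=O(n^{(1-\kappa)/(1+\varrho)})$. Hence (A1)--(A5) and (B1) hold for every $j\ge1$.

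The crux is the variance-localization hypothesis. Write $a(t)=\int_{(0,\infty)}g_j(t/y)\,{\rm d}\rho(y)$. For $j\ge2$ we have $g_j(u)\sim u^j/j!$ as $u\downarrow0$, so $u\mapsto g_j(u)/u^2$ is integrable near $0$ and the variance concentrates on indices $k$ with $1/p_k$ within a bounded multiple of $t$. I would take $R_0(t):=\{k:1/p_k\in[t/\log t,\,t\log t]\}$; estimates based on $\rho(y)\sim yL(y)$ show it captures a $1-o(1)$ fraction of $\var K_j(t)$, while $\log\tau_n(\gamma,1)\sim n^{1+\gamma}$ makes $\tau_{n+1}/\tau_n\to\infty$ faster than $(\log\tau_n)(\log\tau_{n+1})$, so the sets $R_0(\tau_n(\gamma,1))$ are eventually disjoint; this verifies (B22) and, through Theorem~\ref{thm:main}, gives \eqref{eq:LILkar2} for $j\ge2$. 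For $j=1$, however, $g_1(u)\sim u$ as $u\downarrow0$, so $g_1(u)/u^2\sim1/u$ fails to be integrable near $0$: the contribution to $\var K_1(t)$ from $1/p_k\in[\eee^m t,\eee^{m+1}t]$ is of order $tL(\eee^m t)$ for every $m\ge0$, these sum to $t\hat L(t)$, and no single scale dominates. Capturing a $(1-\varsigma)$-fraction then forces $R_\varsigma(t)=\{k:1/p_k\in[\delta t,\psi(t)]\}$ with $\psi(t)$ chosen so that $\hat L(\psi(t))\sim\varsigma\hat L(t)$ (the lower-end loss, from $1/p_k<\delta t$, being $O(tL(t))=o(t\hat L(t))$ by the exponential decay of $g_1$), and disjointness of $R_\varsigma(\tau_n)$ and $R_\varsigma(\tau_{n+1})$ requires $\psi(\tau_n)\le\delta\tau_{n+1}$, that is, $\hat L(\tau_{n+1})\lesssim\varsigma\hat L(\tau_n)$. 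As the exact LIL constant is obtained by letting $\varsigma\downarrow0$, one needs $\hat L(\tau_{n+1}(\gamma,1))/\hat L(\tau_n(\gamma,1))\to0$, and — using $a(t)\sim t\hat L(t)$, $\log\tau_n(\gamma,1)\sim n^{1+\gamma}$ and the slow variation of $\hat L$ — this is equivalent to \eqref{eq:exotic}. Thus under \eqref{eq:exotic} one obtains (B22) (or (B21)) and hence \eqref{eq:LILkar2} with $j=1$; when \eqref{eq:exotic} fails, (B21) and (B22) are unavailable and one retains only the upper bound \eqref{eq:LILkar200}, which comes from the part of the proof of Theorem~\ref{thm:main} establishing the upper bound — a Borel--Cantelli argument over the grid of (A5) using the exponential moment estimate of Lemma~\ref{lem:ineq} — and uses only (A1)--(A5) and (B1).

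The hard part is the sharp two-sided variance analysis when $j=1$: one must pin down $\var K_1(t)\sim t\hat L(t)$ with the correct constant and, less routinely, show that a localizing family can be made disjoint along $\tau_n(\gamma,1)$ exactly under \eqref{eq:exotic} while still capturing almost all of the variance, and that no other choice rescues the lower bound when \eqref{eq:exotic} fails. The delicate point is the uniform control of the ratios $\hat L(\psi(t))/\hat L(t)$ and $\hat L(\delta t)/\hat L(t)$ as $t\to\infty$ with $\psi(t)/t\to\infty$ and $\delta$ a fixed small number, given that $\hat L$ is only slowly varying; in particular one must confirm that \eqref{eq:exotic}, written with $\exp(n^{1+\gamma})$, is really equivalent to $\hat L(\tau_{n+1}(\gamma,1))/\hat L(\tau_n(\gamma,1))\to0$. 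Isolating the inequality \eqref{eq:LILkar200} cleanly from the proof of Theorem~\ref{thm:main} is a further, mostly bookkeeping, task.
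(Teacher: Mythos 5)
Your proposal follows essentially the same route as the paper: Poissonization plus Theorem~\ref{thm:main} with $\mu=1$, $q=0$, condition (B22) via $R_0(t)=\{k: t/\log t<1/p_k\le t\log t\}$ for $j\ge 2$, condition (B21) via an interval whose upper endpoint is calibrated by $\hat L(d(t))/\hat L(t)\to\varsigma$ for $j=1$ with disjointness along $\tau_n(\gamma,1)$ coming from \eqref{eq:exotic}, and Proposition~\ref{lilhalf2}'s counterpart replaced by the upper half (Proposition~\ref{lilhalf1}) when \eqref{eq:exotic} fails — exactly the paper's scheme, with only cosmetic differences (your fixed lower cutoff $\delta t$ versus the paper's $c(t)$ with $c(t)/t\to\infty$, both workable, and your unnecessary aside about "no other choice rescuing the lower bound", which neither the theorem nor the paper claims). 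The main things you cite as "standard" (the $\alpha=1$, $j\ge2$ mean/variance asymptotics with the constant $(2j-3)!/(((j-1)!)^2 2^{2j-3})$, the $t\hat L(t)$ asymptotics via $\hat L$, and the two-sided variance-localization estimates) are precisely what the paper proves in its Lemmas on $\me K_j$, $\var K_j$ and the cut variances, so your plan is correct but leaves those computations to be filled in.
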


Theorems \ref{thm:Karlin0}, \ref{thm:Karlin} and \ref{thm:Karlin1} will be proved in Section \ref{sec:Karlin} by an application of Theorem \ref{thm:main}.

\begin{rem}
The convergence of the integral defining $\hat L$ which is not obvious will be justified in the paragraph preceding Lemma \ref{lem:alone}.

Slowly varying $\hat L$ satisfying \eqref{eq:exotic} do exist, for instance, $\hat L(t)\sim \exp(-\log t/\log\log t)$ as $t\to\infty$. In general, such functions should exhibit a very fast decay. In particular, for slowly varying $\hat L$ of moderate decay like $\hat L(t)\sim (\log t)^{-\beta}$ for some $\beta>0$ or fast decay like $\widehat L(t)\sim \exp(-\sigma (\log t)^\lambda)$ for some $\sigma>0$ and $\lambda\in (0,1)$, relation \eqref{eq:exotic} fails to hold.

We do not claim that relation \eqref{eq:LILkar2}, with $j=1$, fails to hold whenever so does \eqref{eq:exotic}.  As explained in Remark \ref{rem:counterex} the first parts of conditions (B2.1) and (B2.2) of Theorem \ref{thm:main}, with the sets $R_\varsigma(t)$ and $R_0(t)$ chosen in a natural way, do not hold, so that one half of Theorem \ref{thm:main} stated as Proposition \ref{lilhalf2} below is not applicable in this case.
\end{rem}

Our next purpose is to obtain versions of Theorems \ref{thm:Karlin0}, \ref{thm:Karlin} and \ref{thm:Karlin1} for the {\it deterministic version} of Karlin's occupancy scheme.
It is known (see Lemma 1 in \cite{Gnedin+Hansen+Pitman:2007}) that the means $\me \mathcal{K}_j(n)$ and $\me K_j(n)$ satisfy
\begin{equation}\label{eq:means}
\lim_{n\to\infty} |\me\mathcal{K}_j(n)-\me K_j(n)|=0.
\end{equation}
The validity of this relation does not require any specific conditions like regular variation of the counting function $\rho$. However, we are not aware of a counterpart of this result for variances. To fill this gap, we obtain in Proposition \ref{equivvar} formula \eqref{eq:ratio} which is crucial for de-Poissonization. Although \eqref{eq:ratio} is not a precise analogue of \eqref{eq:means}, it serves our purposes.
In the case $j=1$ relation \eqref{eq:ratio} was proved in Lemma 4 of \cite{Gnedin+Hansen+Pitman:2007} under the sole assumption that either $\lim_{n\to\infty} \var \mathcal{K}_1(n)=\infty$ or $\lim_{n\to\infty} \var K_1(n)=\infty$. The case $j\geq 2$ of Proposition \ref{equivvar} seems to be new.

\begin{assertion}\label{equivvar}
Assume that either $\rho\in \Pi_{\ell,\,\infty}$ or $\rho$ is regularly varying at $\infty$ of index $\alpha\in (0,1]$. Then, for $j\in\mn$,
\begin{equation}\label{eq:ratio}
\lim_{n\to\infty} \frac{\var \mathcal{K}_j(n)}{\var K_j(n)}=1.
\end{equation}
\end{assertion}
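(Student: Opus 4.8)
The plan is to relate the Poissonized and deterministic variances through the standard device of conditioning on the Poisson clock. Recall that $\mathcal{K}_j(\pi(t))=K_j(t)$ a.s., where $\pi$ is independent of the box allocations. I would first use the variance decomposition
\[
\var K_j(t)=\E\big[\var(\mathcal{K}_j(\pi(t))\mid \pi(t))\big]+\var\big(\E[\mathcal{K}_j(\pi(t))\mid \pi(t)]\big)
=\E\big[\var \mathcal{K}_j(\pi(t))\big|_{\pi(t)}\big]+\var\big((\me\mathcal{K}_j)(\pi(t))\big),
\]
so that, writing $g_j(n):=\var\mathcal{K}_j(n)$ and $h_j(n):=\me\mathcal{K}_j(n)$, we have $\var K_j(t)=\E g_j(\pi(t))+\var h_j(\pi(t))$. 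The task then splits into: (i) showing $\E g_j(\pi(t))\sim g_j(t)$ as $t\to\infty$ (after replacing $t$ by an integer $n$, using the near-continuity/monotonicity of $g_j$), which would give $\var\mathcal{K}_j(n)\sim \var K_j(n)$ provided the second term $\var h_j(\pi(t))$ is negligible; and (ii) controlling $\var h_j(\pi(t))$. For step (ii), since $h_j(n)=\me\mathcal{K}_j(n)$ grows like $\rho(n)$ up to the appropriate constant and is smooth in $n$ (one can interpolate using $\me\mathcal{K}_j(\pi(s))$), a first-order Taylor/Poisson-fluctuation estimate gives $\var h_j(\pi(t))\lesssim (h_j'(t))^2\var\pi(t)=O(t\,(h_j'(t))^2)$; for $\rho$ regularly varying of index $\alpha\in(0,1]$ or in $\Pi_{\ell,\infty}$ one checks $h_j'(t)=O(\rho(t)/t \cdot (\text{slowly varying}))$ so that $t(h_j'(t))^2=O(\rho(t)^2/t)$, which is $o(\var K_j(t))$ precisely because $\var K_j(t)$ is of order $\rho(t)$ (Karlin regime) or $\ell(t)$ (de Haan regime) and in either case dominates $\rho(t)^2/t$ (here one uses $\alpha<1$, or for $\alpha=1$ the fact that $\var K_1(t)\sim t\hat L(t)$ with $\hat L$ the tail integral, together with $\rho(t)^2/t\sim tL(t)^2$ — this borderline comparison needs $L(t)^2=o(\hat L(t))$, which may require a short separate argument or the known $j=1$ result from \cite{Gnedin+Hansen+Pitman:2007}).

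The more delicate half is step (i): transferring asymptotics of $\E g_j(\pi(t))$ back to $g_j(t)$. Here I would proceed via the two-sided bounds that come from concentration of $\pi(t)$ around $t$: for any $\delta\in(0,1)$,
\[
\E g_j(\pi(t))\ \geq\ \E\big[g_j(\pi(t))\1_{\{|\pi(t)-t|\leq \delta t\}}\big]\ \geq\ \Big(\min_{|n-t|\leq\delta t} g_j(n)\Big)\,\P\{|\pi(t)-t|\leq\delta t\},
\]
and symmetrically an upper bound using $\max_{|n-t|\leq\delta t} g_j(n)$ plus a tail term $\E[g_j(\pi(t))\1_{\{|\pi(t)-t|>\delta t\}}]$. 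The tail term is controlled by the trivial bound $g_j(\pi(t))\leq \me\mathcal{K}_j(\pi(t))\leq \pi(t)$ together with exponential Poisson tail estimates, giving something like $\E[\pi(t)\1_{\{|\pi(t)-t|>\delta t\}}]=O(t\,\eee^{-ct})$, which is utterly negligible. Then, since $g_j$ is — in all the regimes considered — asymptotically equivalent to a regularly varying or $\Pi$-varying function (this is exactly what Theorems \ref{thm:Karlin0}–\ref{thm:Karlin1} establish for $\var K_j$, and one transfers it to $\var\mathcal{K}_j$ via the very relation being proved, so some care with circularity is needed — better to invoke directly that $g_j(n)/g_j(t)\to 1$ uniformly for $|n-t|\leq\delta t$, which holds for slowly or regularly varying $g_j$ and is what must be verified from the structure of $\mathcal{K}_j$), we get $\min$ and $\max$ over the window both $\sim g_j(t)$, and letting $\delta\downarrow 0$ after $t\to\infty$ yields $\E g_j(\pi(t))\sim g_j(t)$.

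I expect the main obstacle to be establishing the uniform-ratio property of $g_j(n)=\var\mathcal{K}_j(n)$ over windows $|n-t|\leq\delta t$ \emph{without} presupposing the Poissonized asymptotics — i.e. proving directly that $\var\mathcal{K}_j$ is regularly varying (resp. in $\Pi$) under the stated hypotheses on $\rho$. One route around this is to run the argument in the other direction: use Theorems \ref{thm:Karlin0}–\ref{thm:Karlin1} to get the asymptotics of $\var K_j(t)=\E g_j(\pi(t))+\var h_j(\pi(t))$, show $\var h_j(\pi(t))$ is negligible as above, conclude $\E g_j(\pi(t))\sim \var K_j(t)$, and then apply an Abelian/Tauberian-type inversion: from $\E g_j(\pi(t))$ having a regularly varying (or $\Pi$-varying) asymptotic and $g_j$ being monotone — which $\var\mathcal{K}_j$ need not literally be, though $n\mapsto\me\mathcal{K}_j(n)$ is monotone for $j=1$ and one can control the increments of $\var\mathcal{K}_j$ directly from the combinatorics of adding one ball — deduce the same asymptotic for $g_j(n)$ itself. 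For $j\geq 2$ the lack of monotonicity of $\var\mathcal{K}_j(n)$ is the genuinely new difficulty; I would handle it by bounding the one-step increment $|g_j(n+1)-g_j(n)|$ by $O(1)$ times a slowly varying factor (each new ball can change "exactly $j$" / "at least $j$" status of at most boundedly many boxes in expectation), which is $o(g_j(n))$ and suffices to make the concentration argument go through with $g_j$ in place of a monotone function.
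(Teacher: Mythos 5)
Your conditional-variance route ($\var K_j(t)=\E\,g_j(\pi(t))+\var h_j(\pi(t))$ with $g_j(n)=\var\mathcal{K}_j(n)$, $h_j(n)=\me\mathcal{K}_j(n)$, followed by de-Poissonization of $g_j$) is genuinely different from the paper's proof, but it has a gap at exactly the point you yourself flag as the main difficulty: the inversion from $\E\,g_j(\pi(t))\sim\var K_j(t)$ to $g_j(n)\sim\var K_j(n)$. That inversion requires quantitative control of the oscillation of $n\mapsto g_j(n)$ over the window where $\pi(t)$ concentrates, and your proposed justification --- that $|g_j(n+1)-g_j(n)|$ is $O(1)$ up to a slowly varying factor because ``each new ball changes the status of boundedly many boxes in expectation'' --- controls the increment of the \emph{mean}, not of the variance. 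Writing $\mathcal{K}_j(n+1)=\mathcal{K}_j(n)+\xi_{n+1}$ with $\xi_{n+1}\in\{0,1\}$, one has $g_j(n+1)-g_j(n)=2\cov(\mathcal{K}_j(n),\xi_{n+1})+\var\,\xi_{n+1}$, and the covariance term is not bounded by the expected number of boxes that change status; the obvious Cauchy--Schwarz bound gives only $O((g_j(n))^{1/2})$ per step. Summed over a window of width $\delta t$, or even over the Poisson fluctuation scale $t^{1/2}$, this exceeds $g_j(n)$ itself, because $\var K_j(n)=o(n)$ in every regime considered, so the concentration argument does not close. To do better one has to condition on the first $n$ balls and estimate the variance of the weighted count $\sum_k p_k\1_{\{\text{box }k\text{ contains exactly }j-1\text{ balls}\}}$, together with an a priori bound of the type $\var\mathcal{K}_j(n)=O(\rho(n))$ (obtainable, e.g., from negative association of the occupancy indicators); that is second-moment work of essentially the same nature as the pairwise-covariance estimates that constitute the bulk of the paper's proof, so it cannot be waved through, and the circularity you worry about is not resolved by the increment heuristic as stated.

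Two further points. Your step (ii) is sound in outline, but in the de Haan regime $\var K_j(t)\sim c\,\ell(t)$ is of strictly smaller order than $\rho(t)$, so a window of width $\delta t$ is too coarse both there and in step (i); you must work at the Poisson fluctuation scale (say $O((t\log t)^{1/2})$) and kill the complementary event with the exponential Poisson tail plus the crude bounds $h_j(m)\le m$, $g_j(m)\le m^2$; also, for $\alpha=1$, $j=1$ you can (and should) simply quote Lemma 4 of \cite{Gnedin+Hansen+Pitman:2007}, as the paper does. For contrast, the paper never de-Poissonizes the variance function: it expands $\var\mathcal{K}_j(n)$ as the diagonal sum $\sum_k\mmp(A_k(j,n))(1-\mmp(A_k(j,n)))$ plus the sum of pairwise covariances, matches the diagonal with $\var K_j(n)$ via termwise binomial-versus-Poisson estimates, and shows the covariance sum is $o(\var K_j(n))$ by an explicit expansion using Lemma \ref{cutsum} and $\var K_j(n)=o(n)$. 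If you supply the missing increment/covariance control (negative association plus the conditional second-moment estimate above), your scheme could be completed, but as it stands the key step is unproved.
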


Given next are LILs for $\mathcal{K}_j(n)$, $j\in\mn$ a deterministic scheme version of $K_j(t)$, $j\in\mn$. The results will be derived from Theorems \ref{thm:Karlin0}, \ref{thm:Karlin} and \ref{thm:Karlin1} with the help of a de-Poissonization technique.
\begin{thm}\label{thm:depoiss}
Under the assumptions of Theorems \ref{thm:Karlin0}, \ref{thm:Karlin} or \ref{thm:Karlin1}, for $j\in\mn$, limit relations \eqref{eq:LILkar}, \eqref{eq:LILkar1} and \eqref{eq:LILkar2} hold true with $\mathcal{K}_j(n)$, $\me \mathcal{K}_j(n)$ and $\var \mathcal{K}_j(n)$ replacing $K_j(t)$, $\me K_j(t)$ and $\var K_j(t)$, and $n\to\infty$ replacing $t\to\infty$.
\end{thm}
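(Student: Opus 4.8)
The plan is to transfer the LILs for the Poissonized quantities $K_j(t)$ (Theorems~\ref{thm:Karlin0}, \ref{thm:Karlin}, \ref{thm:Karlin1}) to the deterministic quantities $\mathcal{K}_j(n)$ via de-Poissonization, using the pathwise identity $K_j(t)=\mathcal{K}_j(\pi(t))$ a.s.\ together with the LIL for the Poisson process $\pi$. First I would record the relevant scaling: in all three regimes $\var K_j(t)$ is (up to slowly varying factors) of order $t^\alpha$ for some $\alpha\in[0,1]$, with $\alpha=0$ in the $\Pi_{\ell,\infty}$ case and $\alpha\in(0,1]$ otherwise, and by Proposition~\ref{equivvar} we have $\var\mathcal{K}_j(n)\sim\var K_j(n)$, while by \eqref{eq:means} $\me\mathcal{K}_j(n)$ and $\me K_j(n)$ differ by $o(1)$; in particular the normalizing sequences for $\mathcal{K}_j(n)$ and $K_j(n)$ are asymptotically equivalent, so it suffices to prove the LIL with $\me K_j(n)$ and $\var K_j(n)$ in place of their deterministic analogues.

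The core step is to control the fluctuation of $\pi(t)$ around $t$. By the classical LIL for the Poisson process, $|\pi(t)-t|=O((t\log\log t)^{1/2})$ a.s., and more crudely, for every $\varepsilon>0$, $|\pi(t)-t|\le \varepsilon t$ for all large $t$ a.s. I would first use the monotonicity $\mathcal{K}_j$ is nondecreasing (more balls can only increase the count of boxes with at least $j$ balls — so for the "$\ge j$" quantities this is immediate; for the "$=j$" quantities one would instead sandwich via $K_j=K_{j}^{\ge}-K_{j+1}^{\ge}$, but the theorem as stated concerns $\mathcal{K}_j=\mathcal{K}_j^{\ge j}$, so monotonicity applies directly). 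Thus on the event $\{t-\varepsilon t\le\pi(t)\le t+\varepsilon t\}$ we get $\mathcal{K}_j(\lfloor t-\varepsilon t\rfloor)\le K_j(t)\le\mathcal{K}_j(\lceil t+\varepsilon t\rceil)$, which would let me bound $\mathcal{K}_j(n)$ fluctuations by $K_j$ fluctuations at nearby times plus the increments $\mathcal{K}_j(\lceil(1+\varepsilon)n\rceil)-\mathcal{K}_j(n)$. The key analytic input is that these increments, once centered, are negligible relative to the LIL normalization: since $\me\mathcal{K}_j$ is regularly varying of index $\alpha$ (up to slowly varying corrections) with $\alpha\le 1$, one has $\me\mathcal{K}_j(\lceil(1+\varepsilon)n\rceil)-\me\mathcal{K}_j(n)=O(\varepsilon\,\me\mathcal{K}_j(n))$ when $\alpha>0$ and $o(\ell(n))$-controlled bounds when $\alpha=0$; combined with a maximal inequality or a second-moment bound on the increment of the centered process (using that $\mathcal{K}_j(\lceil(1+\varepsilon)n\rceil)-\mathcal{K}_j(n)$ has mean and variance both $O(\varepsilon)$ times the respective quantities for $\mathcal{K}_j(n)$, which follows from subadditivity-type estimates analogous to those already in the paper), one shows the centered increment is $o$ of the normalization as $\varepsilon\downarrow 0$. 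Letting $\varepsilon\downarrow 0$ along a sequence then pins down both the $\limsup$ upper and lower bounds.

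For the matching lower bound on the $\limsup$ (i.e.\ that the deterministic $\limsup$ is at least the claimed constant), I would go the other direction: along the deterministic subsequence $n=\lfloor S_m\rfloor$ where $S_m$ is the $m$-th Poisson arrival time, one has $\mathcal{K}_j(\lfloor S_m\rfloor)$ close to $K_j(S_m)=\mathcal{K}_j(m)$, but more directly it is cleaner to note that the lower bound in the LIL for $K_j(t)$ is realized along a deterministic sequence $\tau_n$ of times, and at such times $\pi(\tau_n)$ concentrates around $\tau_n$ within $O((\tau_n\log\log\tau_n)^{1/2})=o(\sqrt{\var K_j(\tau_n)\,\log\log\var K_j(\tau_n)})$ — wait, this requires checking that $\tau_n\ll(\var K_j(\tau_n))^{?}$, which holds exactly because the variance is regularly varying of positive index or because the extra logarithmic factors dominate in the $\alpha=0$ case; here I would invoke assumption (A5)-type bookkeeping already used in the proof of Theorem~\ref{thm:main}. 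The main obstacle I anticipate is the $\alpha=1$ endpoint case (Theorems~\ref{thm:Karlin} with $\alpha=1$ and \ref{thm:Karlin1}), where $\me\mathcal{K}_j(n)$ grows linearly, so the Poisson fluctuation $\pi(t)-t\asymp(t\log\log t)^{1/2}$ is of the same order as $(\var K_j(t)\log\log\var K_j(t))^{1/2}\asymp(tL(t)\log\log(tL(t)))^{1/2}$ only up to the slowly varying factor $L$ — one must verify that $L(t)\to\infty$ (or at least that the de-Poissonization error, which involves $L'$-type corrections, is genuinely lower order), and for $j=1$ in Theorem~\ref{thm:Karlin1} the even more delicate $\hat L$ asymptotics must be carried through; this is where the bulk of the technical work lies, and I would handle it by a careful comparison of $\me K_j$ at times $t$ and $t+O(\sqrt{t\log\log t})$ using the integral representation of $\me K_j$ in terms of $\rho$, showing the difference is $o(\sqrt{\var K_j(t)\log\log\var K_j(t)})$ precisely under the stated slowly varying hypotheses.
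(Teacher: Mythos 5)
Your overall strategy---writing $\mathcal{K}_j(n)=K_j(S_n)$, controlling $S_n-n$ (equivalently $\pi(t)-t$) by its LIL, and matching means and variances via \eqref{eq:means} and Proposition \ref{equivvar}---is the same as the paper's. However, the execution as described has a genuine gap: the sandwich over the crude window $[(1-\varepsilon)n,(1+\varepsilon)n]$ cannot work, and the obstruction is the deterministic centering, not the centered fluctuation. In all three regimes the mean shift $\me K_j(\lceil(1+\varepsilon)n\rceil)-\me K_j(n)$ is of order $\varepsilon\,\var K_j(n)$ (about $\varepsilon\,{\rm const}\,\rho(n)$ when $\rho$ is regularly varying of index $\alpha\in(0,1]$, and about $\varepsilon\,\ell(n)$ in the $\Pi_{\ell,\infty}$ case), which grows strictly faster than the LIL normalization $(\var K_j(n)\,f(\var K_j(n)))^{1/2}$ (with $f=\log$ or $\log\log$) for every fixed $\varepsilon>0$. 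So for fixed $\varepsilon$ the centering error blows up as $n\to\infty$, and it cannot be rescued by sending $\varepsilon\downarrow0$ afterwards. The same miscalibration appears in your lower-bound step: the claim that $|\pi(\tau_n)-\tau_n|=O((\tau_n\log\log\tau_n)^{1/2})$ is negligible compared with $(\var K_j(\tau_n)\log\log\var K_j(\tau_n))^{1/2}$ is false whenever $\alpha<1$, since then $\tau_n\gg\var K_j(\tau_n)$; what must be shown small is not the time displacement itself but the change of $K_j$ across it.

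The repair (and the paper's actual argument) is to use windows of LIL width $d_n\asymp(n\log\log n)^{1/2}$ from the outset: a.s.\ eventually $K_j(n-d_n)\le \mathcal{K}_j(n)\le K_j(n+d_n)$, and the mean increment obeys $b_j(n+d_n)-b_j(n)\sim d_n\,j\,\me K_j^\ast(n)/n$, which is $o\big((\var K_j(n)f(\var K_j(n)))^{1/2}\big)$ precisely because $\rho(t)=o(t)$; this computation goes through in every regime, including $\alpha=1$, $j=1$, where the relevant ratio is of order $(\hat L(n))^{1/2}\to0$, so the ``hard endpoint case'' you flag is no harder than the others once the window has the right width. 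A further point: your proposed second-moment or generic maximal-inequality bound on the centered increments does not yield a summable series for Borel--Cantelli (Chebyshev gives terms of order $1/\log\log n$ at best, and even exponential bounds over an $\varepsilon n$ window give only $(\log n)^{-c/\varepsilon}$); the paper instead applies the exponential inequality \eqref{eq:ss1} to the increments $K_j(n+d_n)-K_j(n)$ with a parameter $\theta$ growing like a power of $\log n$ (respectively a small power of $n$), which makes the relevant probabilities summable. Finally, for the matching lower bound one cannot simply quote the Poissonized LIL along $\tau_n$: the paper reruns the argument of Lemma \ref{lem:new} along the integer times $\lfloor\tau_n\rfloor$ and then combines it with the de-Poissonization estimate and Proposition \ref{equivvar}; some step of this kind is needed in your write-up as well.
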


\section{Proofs of the main results}

\subsection{Auxiliary results}

For $k\in\mn$ and $t\geq 0$, put $X^\ast(t):=X(t)-\me X(t)$ and $\eta_k(t):=\1_{A_k(t)}-\mmp(A_k(t))$. Note that
\begin{equation}\label{eq:reprX}
X^\ast(t)=\sum_{k\geq 1} \eta_k(t),\quad t\geq 0
\end{equation}
and that $\eta_1(t)$, $\eta_2(t),\ldots$ are independent centered random variables.
\begin{lemma}\label{lem:ineq}
For any $\vartheta\in\mr$ and any $t\geq 0$,
\begin{equation}\label{eq:ss1}
\me \exp(\vartheta X^\ast(t))\leq \exp(2^{-1}\vartheta^2\exp(|\vartheta|)a(t))
\end{equation}
and
\begin{equation}\label{eq:ss100}
\me \exp(\vartheta X(t))\leq \exp((\exp(\vartheta)-1)b(t)).
\end{equation}
\end{lemma}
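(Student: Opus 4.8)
The plan is to prove both bounds by exploiting independence of the indicators, which turns the exponential moment of a sum into a product of one-dimensional exponential moments, and then controlling each factor by an elementary inequality.

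For \eqref{eq:ss100}, first observe that by independence $\me\exp(\vartheta X(t))=\prod_{k\geq 1}\me\exp(\vartheta\1_{A_k(t)})=\prod_{k\geq 1}\bigl(1+(\eee^\vartheta-1)\mmp(A_k(t))\bigr)$. Using the elementary inequality $1+x\leq\eee^x$ with $x=(\eee^\vartheta-1)\mmp(A_k(t))$, each factor is at most $\exp\bigl((\eee^\vartheta-1)\mmp(A_k(t))\bigr)$, and multiplying over $k$ gives $\exp\bigl((\eee^\vartheta-1)\sum_{k\geq 1}\mmp(A_k(t))\bigr)=\exp\bigl((\eee^\vartheta-1)b(t)\bigr)$. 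Since $b(t)<\infty$ by hypothesis, the product converges and the bound is legitimate; this also retroactively justifies the claim $\me\eee^{\theta X(t)}<\infty$ for all $\theta$.

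For \eqref{eq:ss1}, I would again use independence to write $\me\exp(\vartheta X^\ast(t))=\prod_{k\geq 1}\me\exp(\vartheta\eta_k(t))$, where $\eta_k(t)=\1_{A_k(t)}-p_k$ with $p_k:=\mmp(A_k(t))$. Writing $\me\exp(\vartheta\eta_k(t))=\eee^{-\vartheta p_k}\bigl(1+p_k(\eee^\vartheta-1)\bigr)$, the goal is the per-factor estimate $\me\exp(\vartheta\eta_k(t))\leq\exp\bigl(2^{-1}\vartheta^2\eee^{|\vartheta|}p_k(1-p_k)\bigr)$, since then the product over $k$ yields exactly $\exp\bigl(2^{-1}\vartheta^2\eee^{|\vartheta|}a(t)\bigr)$ because $a(t)=\sum_k p_k(1-p_k)$. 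To get the per-factor bound, I would use the standard fact that a centered random variable $Y$ bounded in absolute value by $M$ satisfies $\me\eee^{\vartheta Y}\leq\exp\bigl(2^{-1}\vartheta^2\eee^{|\vartheta|M}\var Y\bigr)$ — which follows from expanding $\eee^{\vartheta Y}=1+\vartheta Y+\sum_{m\geq 2}(\vartheta Y)^m/m!$, taking expectations to kill the linear term, bounding $\me|Y|^m\leq M^{m-2}\me Y^2$ for $m\geq 2$, summing the series to obtain $1+\me Y^2\vartheta^2(\eee^{|\vartheta|M}-1-|\vartheta|M)/M^2\le 1+2^{-1}\vartheta^2\eee^{|\vartheta|M}\me Y^2$ (using $\eee^x-1-x\leq 2^{-1}x^2\eee^x$), and finally $1+z\leq\eee^z$. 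Applying this with $Y=\eta_k(t)$ and $M=1$ (since $|\eta_k(t)|\leq\max(p_k,1-p_k)\leq 1$) gives the desired factor bound.

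The only mild subtlety — not really an obstacle — is the passage from the finite-product estimates to the infinite product: one should truncate at $k\le K$, apply the bounds to the partial products, and let $K\to\infty$, using $a(t)\le b(t)<\infty$ to ensure the limiting exponents are finite and that the (monotone, by positivity of the factors in the $1+x\le\eee^x$ step) partial products converge. Everything else is the elementary calculus inequalities above.
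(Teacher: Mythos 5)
Your proposal is correct and follows essentially the same route as the paper: factor the exponential moment over the independent indicators, kill the linear term by centering, bound the per-factor contribution using $|\eta_k(t)|\le 1$ to get $1+2^{-1}\vartheta^2\eee^{|\vartheta|}\me\eta_k^2(t)$, and finish with $1+x\le\eee^x$; the paper simply applies the pointwise inequality $\eee^x\le 1+x+2^{-1}x^2\eee^{|x|}$ where you redo the equivalent term-by-term series estimate. (Only a cosmetic slip: the stray factor $\vartheta^2$ in your intermediate expression $1+\me Y^2\vartheta^2(\eee^{|\vartheta|M}-1-|\vartheta|M)/M^2$ should not be there, as your own final bound confirms; your explicit truncation-and-limit remark is a harmless extra the paper leaves implicit.)
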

\begin{proof}
Using \eqref{eq:reprX}, $\me \eta_k(t)=0$ and $\eee^x\leq 1+x+(x^2/2)\eee^{|x|}$ for $x\in\mr$ we infer, for $\vartheta\in\mr$ and $t\geq 0$,
\begin{equation*}
\me \exp(\vartheta X^\ast(t))=\prod_{k\geq 1}\me \exp (\vartheta \eta_k(t))\leq \prod_{k\geq 1} \big(1+2^{-1}\vartheta^2 \me \eta_k^2(t) \exp(|\vartheta| |\eta_k(t)|)\big).
\end{equation*}
In view of $|\eta_k(t)|\leq 1$ a.s., we conclude that $\exp(|\vartheta| |\eta_k(t)|)\leq \exp(|\vartheta|)$. This in
combination with $\sum_{k\geq 1}\me \eta^2_k(t)=a(t)$ and $\eee^x\geq 1+x$ for $x\in\mr$ yields, for $\vartheta\in\mr$ and $t\geq 0$,
\begin{equation*}
\me \exp(\vartheta X^\ast(t))
\leq \prod_{k\geq 1}(1+2^{-1}\vartheta^2 \exp(|\vartheta|) \me \eta_k^2(t))\leq \exp(2^{-1}\vartheta^2\exp(|\vartheta|)a(t)).
\end{equation*}
Thus, inequality \eqref{eq:ss1} does indeed hold.

Inequality \eqref{eq:ss100} follows from
\begin{multline*}
\me \exp(\vartheta X(t))=\prod_{k\geq 1}\me \exp (\vartheta \1_{A_k(t)})= \prod_{k\geq 1}(1+(\eee^\vartheta-1)\mmp(A_k(t)))\leq \exp\Big((\eee^\vartheta-1)\sum_{k\geq 1}\mmp(A_k(t))\Big)\\=
\exp((\eee^\vartheta-1)b(t)).
\end{multline*}
\end{proof}

Corollary \ref{crude} is an immediate consequence of \eqref{eq:ss100}.
\begin{cor}\label{crude}
Suppose (A4). Then, for $\vartheta \in\mr$ and $t\ge s>0$,
$$\me \exp\big(\vartheta (X(t)-X(s))\big)\le \exp((\eee^{\vartheta}-1)(b(t)-b(s))).$$
\end{cor}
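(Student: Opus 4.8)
The plan is to reduce the claim directly to inequality \eqref{eq:ss100}, the point being that under (A4) the increment $X(t)-X(s)$ is itself an infinite sum of independent indicators with finite mean $b(t)-b(s)$.

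First I would record the relevant pointwise identity. Fix $t\ge s>0$. By (A4), $A_k(s)\subseteq A_k(t)$ for every $k\in\mn$, hence $\1_{A_k(t)}-\1_{A_k(s)}=\1_{A_k(t)\setminus A_k(s)}=\1_{B_k}$, where $B_k:=A_k(t)\setminus A_k(s)$, and moreover $\mmp(B_k)=\mmp(A_k(t))-\mmp(A_k(s))$. Since $B_k$ is measurable with respect to the $\sigma$-algebra generated by the $k$th family $(A_k(r))_{r\ge 0}$, the events $B_1,B_2,\ldots$ are independent. Summing over $k$ gives $X(t)-X(s)=\sum_{k\ge 1}\1_{B_k}$, and, by monotone convergence, $\me(X(t)-X(s))=\sum_{k\ge 1}\mmp(B_k)=b(t)-b(s)\le b(t)<\infty$.

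Next I would apply inequality \eqref{eq:ss100} with the sum $\sum_{k\ge 1}\1_{B_k}$ playing the role of $X(t)$. A glance at the proof of Lemma \ref{lem:ineq} shows that the only features of $X(t)$ used there are the independence of the summand indicators and the summability of their success probabilities, and both hold for $(\1_{B_k})_{k\ge 1}$. Running the same computation, for every $\vartheta\in\mr$,
\[
\me\exp\big(\vartheta(X(t)-X(s))\big)=\prod_{k\ge 1}\big(1+(\eee^{\vartheta}-1)\mmp(B_k)\big)\le\exp\Big((\eee^{\vartheta}-1)\sum_{k\ge 1}\mmp(B_k)\Big)=\exp\big((\eee^{\vartheta}-1)(b(t)-b(s))\big),
\]
where the middle inequality is $1+x\le\eee^{x}$, $x\in\mr$, applied to each (positive) factor $1+(\eee^{\vartheta}-1)\mmp(B_k)$. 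This is precisely the asserted bound.

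There is no genuine obstacle here: the statement is immediate once one observes that (A4) turns the increment $X(t)-X(s)$ into an object of the same type as $X(t)$. The only point worth a remark is the legitimacy of invoking \eqref{eq:ss100} for a sum of indicators other than $X(t)$ itself, which is settled by noting that the proof of that inequality uses nothing about $X(t)$ beyond independence and summability of the success probabilities.
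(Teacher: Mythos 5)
Your proof is correct and follows exactly the route the paper intends: the paper simply declares the corollary an immediate consequence of \eqref{eq:ss100}, the point being, as you spell out, that under (A4) the increment $X(t)-X(s)=\sum_{k\ge 1}\1_{A_k(t)\setminus A_k(s)}$ is again a sum of independent indicators with total mean $b(t)-b(s)$, so the computation in the proof of Lemma \ref{lem:ineq} applies verbatim. Your explicit verification of the independence of the events $B_k$ and of the applicability of the argument to this new sum is a faithful unpacking of what the paper leaves implicit.
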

For each $B\geq 0$ and each $D>1$, put
\begin{equation}\label{eq:defg}
g_{1,\,B}(t):=(B+1)\log\log t,\quad t>\eee\quad\text{and}\quad g_D(t):=(D-1)\log t,\quad t>1.
\end{equation}

\begin{lemma}\label{lem:aux1}
Fix any $\varrho\in (0,1)$ and any $\kappa\in (0,1)$,
suppose (A1) and (A3) and let $q_\varrho$ and $\mu_\varrho$ be as defined in \eqref{eq:mutheta}.

\noindent (a) If $\mu$ in \eqref{eq:infim} is equal to $1$, then $\exp(-g_{1,\,q_\varrho}(a(t_n(\kappa, 1))))=O(n^{-(1-\kappa)})$ as $n\to\infty$, and if $\mu>1$, then $\exp(-g_{\mu_\varrho}(a(t_n(\kappa, \mu))))=O(n^{-(1-\kappa)})$.

\noindent (b) There exists an integer $r\geq 2$ such that $\big(\big((v_{n+1}(\kappa,\mu)-v_n(\kappa, \mu))/a(t_n)\big)^r\big)_{n\in\mn}$ is a summable sequence.
\end{lemma}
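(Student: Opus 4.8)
\textbf{Proof plan for Lemma \ref{lem:aux1}.}

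\medskip

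\emph{Part (a).} The plan is to unwind the definition of $t_n = t_n(\kappa,\mu)$ and of $g_{1,q_\varrho}$, $g_{\mu_\varrho}$ directly. Recall $b(t_n) \geq v_n(\kappa,\mu)$ and, since $b$ is (by (A4), or by eventual continuity) not too large just to the left of $t_n$, also $b(t_n^-) \le v_n(\kappa,\mu)$; in the continuous case $b(t_n) = v_n(\kappa,\mu)$ exactly. The key input is assumption (A3): in the non-attained case one has $b(t) \le C (a(t))^{\mu_\varrho}$ for large $t$ when $\mu>1$, and $b(t) \le (\log t)^{q_\varrho} a(t)$ for large $t$ when $\mu=1$ (this is precisely what the remark after (A3 cont.) records, using $\varrho>0$). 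In the case $\mu>1$: from $v_n(\kappa,\mu) = n^{\mu_\varrho(1-\kappa)/(\mu_\varrho-1)} \le b(t_n) \le C(a(t_n))^{\mu_\varrho}$ we get $a(t_n) \ge (C^{-1} n^{\mu_\varrho(1-\kappa)/(\mu_\varrho-1)})^{1/\mu_\varrho} = C^{-1/\mu_\varrho} n^{(1-\kappa)/(\mu_\varrho-1)}$, hence $\log a(t_n) \ge \frac{1-\kappa}{\mu_\varrho-1}\log n - O(1)$, and therefore
\[
\exp(-g_{\mu_\varrho}(a(t_n))) = (a(t_n))^{-(\mu_\varrho-1)} \le C\, n^{-(1-\kappa)} = O(n^{-(1-\kappa)}).
\]
In the case $\mu=1$: from $\exp(n^{(1-\kappa)/(q_\varrho+1)}) = v_n(\kappa,1) \le b(t_n) \le (\log t_n)^{q_\varrho} a(t_n)$ and $\log t_n \le \log a(t_n) + O(1)$ (valid because $a(t)\le b(t)$ and, by (A1), both diverge; one needs a mild control here — see the obstacle paragraph), one obtains $n^{(1-\kappa)/(q_\varrho+1)} \le q_\varrho \log\log t_n + \log a(t_n) \le (q_\varrho+1)\log a(t_n) + O(\log\log a(t_n))$, so $\log a(t_n) \ge \frac{1}{q_\varrho+1} n^{(1-\kappa)/(q_\varrho+1)} (1+o(1))$, whence $\log\log a(t_n) \ge \frac{1-\kappa}{q_\varrho+1}\log n - O(1)$ and
\[
\exp(-g_{1,q_\varrho}(a(t_n))) = (\log a(t_n))^{-(q_\varrho+1)} \le C\, n^{-(1-\kappa)} = O(n^{-(1-\kappa)}).
\]

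\medskip

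\emph{Part (b).} Here I would bound the increments $v_{n+1}(\kappa,\mu) - v_n(\kappa,\mu)$ from above and $a(t_n)$ from below, and then choose $r$ large. When $\mu>1$, $v_n = n^{\mu_\varrho(1-\kappa)/(\mu_\varrho-1)}$ is a power of $n$ with exponent $p := \mu_\varrho(1-\kappa)/(\mu_\varrho-1) > 1$, so $v_{n+1}-v_n = O(n^{p-1})$ by the mean value theorem; combined with the lower bound $a(t_n) \ge c\, n^{(1-\kappa)/(\mu_\varrho-1)}$ from Part (a), the ratio $(v_{n+1}-v_n)/a(t_n)$ is $O(n^{p-1-(1-\kappa)/(\mu_\varrho-1)}) = O(n^{(1-\kappa)(\mu_\varrho-1)/(\mu_\varrho-1)}\cdot n^{-(1-\kappa)/(\mu_\varrho-1)})$; a short computation shows the exponent is $(1-\kappa) - (1-\kappa)/(\mu_\varrho-1)$ plus lower order, which is strictly less than $1$, so choosing $r$ with $r\cdot(\text{exponent}) < -1$ makes the sequence summable. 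When $\mu=1$, $v_n = \exp(n^{(1-\kappa)/(q_\varrho+1)})$; writing $\alpha := (1-\kappa)/(q_\varrho+1) \in (0,1)$, we have $v_{n+1}-v_n = v_{n+1}(1 - \exp(-(n^\alpha - (n-1)^\alpha)\cdot(1+o(1)))) \le v_{n+1}\cdot C n^{\alpha-1}$, while from Part (a), $\log a(t_n) \ge c\, n^\alpha(1+o(1))$, i.e. $a(t_n) \ge \exp(c\, n^\alpha(1+o(1)))$. Since $v_{n+1} = \exp((n+1)^\alpha) \le \exp(n^\alpha + \alpha n^{\alpha-1})$ and $a(t_n) \ge \exp(c'\, n^\alpha)$ with $c' > 1$ obtainable (because in fact $\log a(t_n) \ge \frac{1}{q_\varrho+1}\cdot n^\alpha(1+o(1))$ and $q_\varrho+1 = q+\varrho+1$; the relevant comparison is against the exponent $1$ coming from $v_{n+1}$ — here one uses that $v_n \le b(t_n) \le (\log t_n)^{q_\varrho} a(t_n)$ forces $a(t_n)$ to carry almost all of the exponential growth of $v_n$), the ratio $v_{n+1}/a(t_n)$ decays sub-exponentially but the polynomial factor $n^{\alpha-1}$ is not enough on its own; the correct bookkeeping is $(v_{n+1}-v_n)/a(t_n) \le C n^{\alpha-1}\exp(n^\alpha - \log a(t_n))$, and since $\log a(t_n) - n^\alpha \to$ a quantity that is eventually $\ge \varepsilon n^\alpha$ for some $\varepsilon>0$ is \emph{not} guaranteed, one instead argues that $(v_{n+1}-v_n)/a(t_n) = O(n^{\alpha-1}(\log t_n)^{q_\varrho})/(\text{something} \to\infty)$ via the sandwich $v_n \le (\log t_n)^{q_\varrho} a(t_n)$ — from which $a(t_n) \ge v_n(\log t_n)^{-q_\varrho}$ — and then $(v_{n+1}-v_n)/a(t_n) \le (v_{n+1}/v_n - 1)(\log t_n)^{q_\varrho} = O(n^{\alpha-1})(\log t_n)^{q_\varrho}$; finally $\log t_n = O(\log a(t_n)) = O(n^\alpha)$, so the ratio is $O(n^{\alpha-1}\cdot n^{q_\varrho\alpha})$. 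This is a power of $n$ of exponent $\alpha(q_\varrho+1) - 1 = (1-\kappa) - 1 = -\kappa < 0$, so again pick $r > 1/\kappa$ to make $\big(((v_{n+1}-v_n)/a(t_n))^r\big)$ summable.

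\medskip

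\emph{Main obstacle.} The delicate point in both parts is controlling $\log t_n$ in terms of $\log a(t_n)$ — i.e., ruling out that $t_n$ grows so much faster than $a(t_n)$ or $b(t_n)$ that the logarithmic factors $(\log t_n)^{q_\varrho}$ spoil the estimates. In the $\mu>1$ case this never enters, so Part (b) there is routine. In the $\mu=1$ case one must exploit the structure of (A3 cont.) carefully: condition \eqref{eq:infim2}, $b(t)/a(t) = O(f_q(a(t))) = O((\log a(t))^q \mathcal{L}(\log a(t)))$, expresses the excess of $b$ over $a$ in terms of $\log a$, \emph{not} $\log t$; so the clean route is to avoid $\log t_n$ altogether and run the argument purely through $b(t_n) \le f_{q_\varrho}(a(t_n)) a(t_n)$ (with $q_\varrho$ in place of $q$, valid for large $t$ by the remark), which gives $v_n(\kappa,1) \le (\log a(t_n))^{q_\varrho}(1+o(1)) a(t_n)$ and hence directly a lower bound on $a(t_n)$ with no $t_n$ appearing. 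I expect the paper's proof does exactly this, and I would follow suit, using (B1)/eventual continuity of $a$ and assumption (A3 cont.) only to the extent needed to pass from the strict inequality at $t_n$ to a two-sided sandwich.
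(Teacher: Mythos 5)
Your overall strategy is the one the paper uses: combine the defining inequality $v_n(\kappa,\mu)\le b(t_n)$ with (A3)/(A3 cont.) to bound $a(t_n)$ from below, estimate the increments $v_{n+1}-v_n$, show the ratio is $O(n^{-\kappa})$, and pick $r$ with $r\kappa>1$. Part (a) is essentially correct once you replace the $\log t_n$ detour by the bound from \eqref{eq:infim2}, which controls $b(t)/a(t)$ by a function of $\log a(t)$ (not $\log t$); your obstacle paragraph proposes exactly this fix, and it is what the paper does.

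In part (b), however, there are two genuine problems. For $\mu>1$ the exponent arithmetic is wrong: with $p=\mu_\varrho(1-\kappa)/(\mu_\varrho-1)$, $v_{n+1}-v_n=O(n^{p-1})$ and $a(t_n)\ge c\,n^{(1-\kappa)/(\mu_\varrho-1)}$ give the exponent $p-1-(1-\kappa)/(\mu_\varrho-1)=-\kappa$, not ``$(1-\kappa)-(1-\kappa)/(\mu_\varrho-1)$'', and the criterion you invoke (``strictly less than $1$'') cannot yield summability: your claimed exponent is nonnegative whenever $\mu_\varrho\ge 2$, and then no $r$ with $r\cdot(\text{exponent})<-1$ exists. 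The correct value $-\kappa$ is precisely what makes $r\kappa>1$ work, as in the paper. For $\mu=1$, writing $\alpha:=(1-\kappa)/(q_\varrho+1)$, your chain $(v_{n+1}-v_n)/a(t_n)\le(v_{n+1}/v_n-1)(\log t_n)^{q_\varrho}=O(n^{\alpha-1})(\log t_n)^{q_\varrho}$ rests on two unproved assertions: the bound $b(t_n)\le(\log t_n)^{q_\varrho}a(t_n)$ (again, \eqref{eq:infim2} gives $(\log a(t))^{q_\varrho}$, not $(\log t)^{q_\varrho}$), and, more importantly, $\log t_n=O(\log a(t_n))=O(n^\alpha)$. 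What is actually needed is the upper bound $\log a(t_n)=O(n^{\alpha})$, which is the paper's \eqref{eq:ineqlog}, and its proof is exactly where the extra hypothesis in (A3 cont.) enters: if $b$ is eventually continuous then $b(t_n)=v_n$, and otherwise one combines $\log b(t_n-1)\le n^{\alpha}$ with ${\lim\inf}_{t\to\infty}(\log b(t-1)/\log b(t))>0$. You never invoke this hypothesis, so the bound hangs in the air. (One could instead bypass \eqref{eq:ineqlog} by a dichotomy: if $a(t_n)\le\exp(2n^\alpha)$ the polylogarithmic factor is $O(n^{q_\varrho\alpha})$ and the ratio is $O(n^{-\kappa})$, while if $a(t_n)>\exp(2n^\alpha)$ the ratio is already superexponentially small; but some such argument must be supplied — as written the step is a gap.)
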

\begin{proof}

\noindent (a) By (A3) and the definition of $t_n$, for $\mu=1$,
\begin{equation}\label{eq:ineq10}
\exp(n^{(1-\kappa)/(q_\varrho+1)})\leq b(t_n(\kappa, 1))=O(a(t_n(\kappa, 1))f_q(a(t_n(\kappa, 1)))),\quad n\to\infty
\end{equation}
and, for $\mu>1$,
\begin{equation}\label{eq:ineq11}
n^{\mu_\varrho(1-\kappa)/(\mu_\varrho -1)}\leq b(t_n(\kappa, \mu))=O((a(t_n(\kappa, \mu)))^{\mu_\varrho}),\quad n\to\infty.
\end{equation}
Since $\lim_{t\to\infty}(\log f_q(t)/\log t)=0$ we infer $$\exp(-g_{1,\,q_\varrho}(a(t_n(\kappa, 1))))=(\log a(t_n(\kappa, 1)))^{-(q_\varrho+1)}=O(n^{-(1-\kappa)}),\quad n\to\infty.$$ Also, for $\mu>1$, $$\exp(-g_{\mu_\varrho}(a(t_n(\kappa, \mu))))=(a(t_n(\kappa, \mu)))^{-(\mu_\varrho-1)}=O(n^{-(1-\kappa)}),\quad n\to\infty.$$

\noindent (b) We first show that if (A3) holds with $\mu=1$, then
\begin{equation}\label{eq:ineqlog}
\log a(t_n(\kappa, 1))=O(n^{(1-\kappa)/(q_\varrho+1)}),\quad n\to\infty.
\end{equation}
Indeed, if $b$ is eventually continuous, then $b(t_n(\kappa, 1))=v_n(\kappa, 1)$ for large enough $n$, whence $\log a(t_n(\kappa, 1))\leq \log b(t_n(\kappa, 1))=n^{(1-\kappa)/(q_\varrho+1)}$ for large $n$. Assume now that ${\lim\inf}_{t\to\infty}(\log b(t-1)/\log b(t))>0$. This in combination with $\log b(t_n(\kappa, 1)-1) \leq n^{(1-\kappa)/(q_\varrho+1)}$ and $\log a(t_n(\kappa, 1))\leq \log b(t_n(\kappa, 1))$ yields \eqref{eq:ineqlog}.

Observe that, as $n\to\infty$,
\begin{multline*}
v_{n+1}(\kappa,1)-v_n(\kappa, 1)=\exp((n+1)^{(1-\kappa)/(q_\varrho+1)})-\exp(n^{(1-\kappa)/(q_\varrho+1)})\\~\sim~((1-\kappa)/(q_\varrho+1))n^{((1-\kappa)/(q_\varrho+1))-1}\exp(n^{(1-\kappa)/(q_\varrho+1)})
\end{multline*}
and, for $\mu>1$, $$v_{n+1}(\kappa,\mu)-v_n(\kappa, \mu)=(n+1)^{\mu_\varrho(1-\kappa)/(\mu_\varrho-1)}-n^{\mu_\varrho(1-\kappa)/(\mu_\varrho-1)}~\sim~ (\mu_\varrho(1-\kappa)/(\mu_\varrho-1))n^{(1-\mu_\varrho\kappa)/(\mu_\varrho-1)}.$$

Recall that $\lim_{t\to\infty}t^{-\rho_\ast}L_\ast(t)=0$ for any $\rho_\ast>0$ and any $L_\ast$ slowly varying at $\infty$. This implies that $f_q(t)=O((\log t)^{q_\varrho})$ as $t\to\infty$. Using the latter estimate together with \eqref{eq:ineq10} and \eqref{eq:ineqlog} we infer
\begin{multline}\label{eq:1a}
\frac{1}{a(t_n(\kappa, 1))}=O((\log a(t_n(\kappa, 1)))^{q_\varrho}\exp(-n^{(1-\kappa)/(q_\varrho+1)}))\\=O(n^{q_\varrho(1-\kappa)/(q_\varrho+1)}\exp(-n^{(1-\kappa)/(q_\varrho+1)})),\quad n\to\infty.
\end{multline}
If $\mu>1$, inequality \eqref{eq:ineq11} entails
\begin{equation}\label{eq:1a1}
\frac{1}{a(t_n(\kappa,\mu))}=O(n^{-(1-\kappa)/(\mu_\varrho-1)})),\quad n\to\infty.
\end{equation}
Summarizing, for $\mu\geq 1$, $$\frac{v_{n+1}(\kappa,\mu)-v_n(\kappa, \mu)}{a(t_n)}=O(n^{-\kappa}),\quad n\to\infty.$$ Now the claim of part (b) is justified by choosing any integer $r\geq 2$ satisfying $r\kappa>1$.
\end{proof}

\begin{lemma}\label{lem:equality}
Fix any $\gamma>0$ and suppose (A1) and (A3). If $\mu$ in \eqref{eq:infim} is equal to $1$, then $\exp(-g_{1,\,q}(a(\tau_n(\gamma, 1))))=O(n^{-(1+\gamma)})$ as $n\to\infty$, and if $\mu>1$, then $\exp(-g_\mu (a(\tau_n(\gamma,\mu))))=O(n^{-(1+\gamma)})$ as $n\to\infty$. Assuming additionally that (B1) holds, $$\exp(-g_{1,\,q}(a(\tau_n(\gamma, 1))))~\sim~n^{-(1+\gamma)}\quad \text{and}\quad \exp(-g_\mu(a(\tau_n(\gamma, \mu))))~\sim~ n^{-(1+\gamma)},\quad n\to\infty$$ in the cases $\mu=1$ and $\mu>1$, respectively.
\end{lemma}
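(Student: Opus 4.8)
My plan is to reduce the whole statement to one elementary fact about the first-passage time $\tau_n=\tau_n(\gamma,\mu)=\inf\{t>0:a(t)>w_n(\gamma,\mu)\}$: namely that $a(\tau_n)\ge w_n(\gamma,\mu)$ for all large $n$. The first thing I would record is that $a$ is right-continuous (each $p_k(t):=\mmp(A_k(t))$ is right-continuous, and, since $\sum_k p_k(t)=b(t)<\infty$ and $p_k(t)(1-p_k(t))\le p_k(t)$, the right-limit passes through the sum defining $a(t)$ by dominated convergence; this is automatic in all our applications, where $A_k(t)=\{\Gamma_k\le t\}$-type events). Given this, the bound $a(\tau_n)\ge w_n$ is immediate from the definition: were $a(\tau_n)<w_n$, right-continuity would give $a(t)<w_n$ on a whole right-neighbourhood of $\tau_n$, contradicting $\tau_n=\inf\{t:a(t)>w_n\}$. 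Also $\tau_n$ is finite and $\tau_n\to\infty$ by (A1).

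The first (``$O$'') half of the lemma then drops out by substituting the explicit forms of $w_n$ and of the functions in \eqref{eq:defg}. If $\mu>1$, then $w_n=n^{(1+\gamma)/(\mu-1)}$, whence $\exp(-g_\mu(a(\tau_n)))=a(\tau_n)^{-(\mu-1)}\le w_n^{-(\mu-1)}=n^{-(1+\gamma)}$. If $\mu=1$, then $\log w_n=n^{(1+\gamma)/(q+1)}$, so $\log a(\tau_n)\ge n^{(1+\gamma)/(q+1)}$ and $\exp(-g_{1,\,q}(a(\tau_n)))=(\log a(\tau_n))^{-(q+1)}\le n^{-(1+\gamma)}$. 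In both cases this is $O(n^{-(1+\gamma)})$, with constant $1$.

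For the refined ``$\sim$'' statement under (B1) I need the matching reverse bound on $a(\tau_n)$. If $a$ is eventually continuous, continuity together with $a(t)\le w_n$ for $t<\tau_n$ forces $a(\tau_n)\le w_n$ as well, so in fact $a(\tau_n)=w_n$ and $\exp(-g_\mu(a(\tau_n)))=w_n^{-(\mu-1)}=n^{-(1+\gamma)}$ (resp.\ $\exp(-g_{1,\,q}(a(\tau_n)))=(\log w_n)^{-(q+1)}=n^{-(1+\gamma)}$) exactly. Under the other alternative of (B1) I would instead use that $\tau_n-1<\tau_n$, hence $a(\tau_n-1)\le w_n$, together with $\tau_n\to\infty$: when $\mu>1$, (B1) gives $a(\tau_n-1)/a(\tau_n)\to1$, so $w_n\le a(\tau_n)=(1+o(1))a(\tau_n-1)\le(1+o(1))w_n$, i.e.\ $a(\tau_n)\sim w_n$, and hence $a(\tau_n)^{-(\mu-1)}\sim n^{-(1+\gamma)}$; when $\mu=1$, (B1) gives $\log a(\tau_n-1)/\log a(\tau_n)\to1$ (and $a(\tau_n-1)\to\infty$ by (A1)), so $\log w_n\le\log a(\tau_n)=(1+o(1))\log a(\tau_n-1)\le(1+o(1))\log w_n$, i.e.\ $\log a(\tau_n)\sim\log w_n=n^{(1+\gamma)/(q+1)}$, and hence $(\log a(\tau_n))^{-(q+1)}\sim n^{-(1+\gamma)}$. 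The one genuinely delicate point is the lower bound $a(\tau_n)\ge w_n$ — this is the only place where (mild) regularity of $a$ is used — and everything past it is routine substitution of the explicit expressions $w_n(\gamma,1)=\exp(n^{(1+\gamma)/(q+1)})$, $w_n(\gamma,\mu)=n^{(1+\gamma)/(\mu-1)}$, $g_{1,\,q}(t)=(q+1)\log\log t$, $g_\mu(t)=(\mu-1)\log t$.
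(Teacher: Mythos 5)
Your proof is correct and follows essentially the same route as the paper's (omitted) argument: the lower bound $a(\tau_n(\gamma,\mu))\ge w_n(\gamma,\mu)$ plus direct substitution of $g_{1,\,q}$, $g_\mu$ and $w_n$ gives the $O$-part (the analogue of the proof of Lemma~\ref{lem:aux1}(a)), while the dichotomy in (B1) together with $a(\tau_n-1)\le w_n(\gamma,\mu)$ and the same lower bound gives the asymptotic equivalences (the analogue of the proof of \eqref{eq:ineqlog}). Your explicit discussion of right-continuity of $a$ only makes precise a regularity step the paper leaves implicit (it uses the corresponding inequality $b(t_n)\ge v_n$ in Lemma~\ref{lem:aux1}(a) in the same way), so it is a harmless addition rather than a different approach.
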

\begin{proof}
The proof of the first part is similar to the proof of Lemma \ref{lem:aux1}(a). The proof of the second part is similar to the proof of \eqref{eq:ineqlog}. Hence, both are omitted.
\end{proof}

Lemma \ref{bill} can be found in Lemma 2 of \cite{Longnecker+Serfling:1977}.
\begin{lemma}\label{bill}
Let $\xi_1$, $\xi_2,\ldots$ be random variables. Fix any $N\in\mn$ and assume that
\begin{equation}\label{bill1}
\me |\xi_{l+1}+\ldots+ \xi_m|^{\lambda_1} \leq (u_{l+1}+\ldots+u_m)^{\lambda_2},\quad 0\leq l<m\leq N
\end{equation}
for some $\lambda_1>0$, some $\lambda_2>1$ and some nonnegative numbers $u_1,\ldots, u_N$. Then
\begin{equation*}\label{billconcl}
\me (\max_{1\leq m\leq N}\,|\xi_1+\ldots+\xi_m|)^{\lambda_1} \leq A_{\lambda_1,\,\lambda_2} (u_1+\ldots+u_N)^{\lambda_2}
\end{equation*}
for a positive constant $A_{\lambda_1,\,\lambda_2}$.
\end{lemma}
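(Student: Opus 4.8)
The plan is to recognize this as a form of Serfling's maximal moment inequality and to quote \cite{Longnecker+Serfling:1977} directly, as the authors do. For a self-contained argument I would run the dyadic chaining method. Write $S_0:=0$ and $S_m:=\xi_1+\dots+\xi_m$, and for $0\le a\le b\le N$ put $g[a,b]:=u_{a+1}+\dots+u_b$, so that \eqref{bill1} reads $\me|S_b-S_a|^{\lambda_1}\le g[a,b]^{\lambda_2}$ while the quantity to be controlled is $M:=\max_{1\le m\le N}|S_m|$. Two elementary facts are used throughout: $g$ is additive, hence (since $\lambda_2\ge1$) $g[a,b]^{\lambda_2}+g[b,c]^{\lambda_2}\le g[a,c]^{\lambda_2}$; and $|S_m-S_a|\le|S_b-S_a|+|S_m-S_b|$ for $a\le b\le m$.

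First I would build a binary decomposition of $\{0,1,\dots,N\}$ balanced with respect to the weights $(u_i)$. A block $[a,b]$ of total weight $w:=g[a,b]$ is split thus: if $b-a=1$, stop; if some increment $u_j$ with $a<j\le b$ has $u_j\ge w/2$, split off the singleton $\{j\}$ and recurse on $[a,j-1]$ and $[j,b]$, each of weight at most $w/2$; otherwise split $[a,b]=[a,n]\cup[n,b]$ at a point $n$ with $g[a,n]\in[w/4,3w/4)$, which exists because all increments are then $<w/2$. Writing $U:=g[0,N]$, since every split reduces the weight of a child that is itself split by a factor at most $3/4$ while the fat singletons are leaves, every block at depth $d\ge1$ of the resulting tree has weight at most $(3/4)^{d-1}U$, and the blocks at a given depth have pairwise disjoint index sets. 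The standard dyadic chaining bound then gives $M\le 2\sum_{d\ge1}D_d$, where $D_d$ is the maximum of $|S_b-S_a|$ over blocks $[a,b]$ at depth $d$ (the factor $2$ accounting for the singleton terms); applying \eqref{bill1} to the disjoint depth-$d$ blocks,
\begin{equation*}
\me D_d^{\lambda_1}\le\sum_v g[v]^{\lambda_2}\le\Big(\max_v g[v]\Big)^{\lambda_2-1}\sum_v g[v]\le\big((3/4)^{d-1}U\big)^{\lambda_2-1}U=(3/4)^{(d-1)(\lambda_2-1)}U^{\lambda_2},
\end{equation*}
where $v$ ranges over the depth-$d$ blocks and $g[v]$ is the weight of $v$. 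As $\lambda_2>1$ this decays geometrically in $d$, so Minkowski's inequality (if $\lambda_1\ge1$) or sub-additivity of $x\mapsto x^{\lambda_1}$ (if $\lambda_1\le1$) yields $\me M^{\lambda_1}\le A_{\lambda_1,\lambda_2}U^{\lambda_2}$ with a finite constant not depending on $N$ or on the $u_i$, which is the assertion.

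The hard part will be the passage from the depth-wise estimates to the bound on $M$, and this is exactly where $\lambda_2>1$ is genuinely used. A naive single bisection recursing $\|M_{[a,b]}\|_{\lambda_1}\le\|M_{[a,n]}\|_{\lambda_1}+\|M_{[n,b]}\|_{\lambda_1}$ does not close when $\lambda_1>\lambda_2$, since each of the roughly $\log N$ levels then costs a factor $2^{1-\lambda_2/\lambda_1}>1$; one must instead sum over all depths simultaneously so that the geometric factor $(3/4)^{(d-1)(\lambda_2-1)}$ is actually exploited. Accordingly the real care goes into arranging the decomposition so that the depth-$d$ blocks have geometrically shrinking maximal weight — in particular peeling off the ``fat'' increments, which cannot be subdivided — after which everything else is routine bookkeeping.
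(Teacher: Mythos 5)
Your proof is correct, but it is not the route taken in the paper: the paper offers no proof of Lemma \ref{bill} at all, it simply quotes Lemma 2 of \cite{Longnecker+Serfling:1977}, whose classical argument (in the Billingsley/M\'oricz--Serfling style) proceeds by induction on $N$ -- bisect at a point where the weight $u_1+\ldots+u_N$ is split roughly in half, bound the maximum by the maxima over the two halves plus a single increment across the cut, and solve the resulting recursion for the constant, with $\lambda_2>1$ entering through the factor $(1/2)^{\lambda_2-1}<1$. Your weight-balanced tree plus chaining is a genuine alternative: it is self-contained, makes the role of $\lambda_2>1$ completely transparent (geometric decay of $\max_v g[v]^{\lambda_2-1}$ along the depths), and correctly identifies why the naive bisection recursion does not close. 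Two small repairs are needed, neither affecting the result. First, your splits are ternary in the ``fat increment'' case, so a prefix $[0,m]$ decomposes into at most two full siblings per depth plus one terminal block, giving $M\le 3\sum_{d\ge 1}D_d$ rather than $2\sum_{d\ge 1}D_d$; since $A_{\lambda_1,\lambda_2}$ is unspecified this is immaterial. Second, blocks of zero weight must be discarded at the outset (on such a block all partial sums coincide a.s.\ by \eqref{bill1}), since otherwise your splitting rule need not terminate; with this proviso the induction ``internal nodes at depth $d$ have weight at most $(3/4)^dU$, hence every node at depth $d\ge1$ has weight at most $(3/4)^{d-1}U$'' is exactly right, the depth-$d$ blocks are indeed pairwise disjoint, and the Minkowski/subadditivity step closes the argument with a constant depending only on $\lambda_1,\lambda_2$.
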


\subsection{Proofs of Propositions \ref{prop:clt} and \ref{main:exp}}\label{sec:CLT}

\begin{proof}[Proof of Proposition \ref{prop:clt}]
This follows by a standard application of the Lindeberg-Feller theorem. One of the conditions that has to be checked is
\begin{equation}\label{eq:lindeb}
\lim_{t\to\infty}\sum_{k\geq 1}\me \frac{(\eta_k(t))^2}{a(t)}\1_{\{|\eta_k(t)|>\varepsilon (a(t))^{1/2}\}}=0
\end{equation}
for all $\varepsilon>0$. As $|\eta_k(t)|=|\1_{A_k(t)}-\mmp(A_k(t))|\leq 1$ a.s.\ and $a$ diverges to infinity, the indicator \newline $1_{\{|\eta_k(t)|>\varepsilon (a(t))^{1/2}\}}$ is equal to $0$ for large $t$, which entails \eqref{eq:lindeb}.
\end{proof}
\begin{proof}[Proof of Proposition \ref{main:exp}]
Fix any $\theta \in\mr$. By Proposition \ref{prop:clt} together with Fatou's lemma,
$$
\liminf_{t\to\infty} \me\exp\Big(\frac{\theta \, X^\ast(t)}{(a(t))^{1/2}}\Big) \geq \me \exp(\theta \, {\rm Normal}(0,1)).
$$
Hence, we are left with proving that
\begin{equation}\label{eq:limsup}
\limsup_{t\to\infty} \me \exp\Big(\frac{\theta \, X^\ast(t)}{(a(t))^{1/2}}\Big)\le \me \exp(\theta \, {\rm Normal}\,(0,1))=\exp(\theta^2/2).
\end{equation}
Using inequality \eqref{eq:ss1} with $\vartheta=\theta/(a(t))^{1/2}$ we obtain
\begin{equation*}
\me \exp\Big(\frac{\theta \, X^\ast(t)}{(a(t))^{1/2}}\Big)\leq \exp\Big(\frac{\theta^2}{2}\exp\Big(\frac{|\theta|}{(a(t))^{1/2}}\Big)\Big).
\end{equation*}
Recalling (A1) and letting $t$ tend to $\infty$ yields \eqref{eq:limsup}.

Relation \eqref{eq:sec_statement} is a consequence of uniform integrability of $(|X^\ast(t)|/(a(t))^{1/2}))_{t\geq s_1}$ for some $s_1
>0$, which is implied by \eqref{eq:limsup}, and Proposition \ref{prop:clt}.
\end{proof}

\subsection{Proof of Theorem \ref{thm:main}}\label{sec:main}

Theorem \ref{thm:main} follows from the two propositions given next.
\begin{assertion}\label{lilhalf1}
Suppose (A1)-(A5). Then, with $\mu\geq 1$ and $q\geq 0$ as defined in \eqref{eq:infim} and \eqref{eq:infim2}, respectively, 	
\begin{equation*}\label{lil12}
\limsup_{t\to\infty}\frac{X^\ast(t)}{(2(q+1)a(t) \log\log
a(t))^{1/2}}\leq 1\quad\text{and}\quad \limsup_{t\to\infty}\frac{X^\ast(t)}{(2(\mu-1)a(t)\log a(t))^{1/2}}\leq 1\quad \text{{\rm a.s.}}
\end{equation*}
in the cases $\mu=1$ and $\mu>1$, respectively.
\end{assertion}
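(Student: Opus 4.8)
The plan is to prove the upper bound via the classical Borel--Cantelli route: first establish the bound along the deterministic subsequence $(t_n)=(t_n(\kappa,\mu))$ defined in \eqref{tn}, and then control the fluctuations of $X^\ast$ over each block $[t_n,t_{n+1}]$ using the maximal inequality of Lemma \ref{bill} together with the partition supplied by (A5). Throughout I treat the two cases $\mu=1$ and $\mu>1$ in parallel, writing $g(t)$ for $g_{1,\,q_\varrho}(a(t))$ when $\mu=1$ and for $g_{\mu_\varrho}(a(t))$ when $\mu>1$, so that in both cases the target normalization is $(2a(t)g(t))^{1/2}$ up to the harmless replacement of $q$ by $q_\varrho$ (resp.\ $\mu-1$ by $\mu_\varrho-1$), which is absorbed at the end by letting $\varrho\downarrow 0$.

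First I would handle the subsequence. Fix $\varepsilon>0$. By the exponential Chebyshev inequality applied with $\vartheta=(1+\varepsilon)(g(t_n)/a(t_n))^{1/2}$ and Lemma \ref{lem:ineq}, namely \eqref{eq:ss1},
\[
\mmp\Bigl\{X^\ast(t_n)>(1+\varepsilon)(2a(t_n)g(t_n))^{1/2}\Bigr\}\le \exp\Bigl(-(1+\varepsilon)^2 g(t_n)+\tfrac12\vartheta^2 a(t_n)(\eee^{|\vartheta|}-1)\Bigr),
\]
and since $\vartheta\to 0$ (because $g(t_n)/a(t_n)\to 0$ under (A1), (A3)), the exponent is at most $-(1+\varepsilon)g(t_n)$ for all large $n$. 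By Lemma \ref{lem:aux1}(a), $\exp(-g(t_n))=O(n^{-(1-\kappa)})$, so the right-hand side is $O(n^{-(1+\varepsilon)(1-\kappa)})$, which is summable once $\kappa$ is chosen small enough that $(1+\varepsilon)(1-\kappa)>1$. Borel--Cantelli then gives $\limsup_n X^\ast(t_n)/(2a(t_n)g(t_n))^{1/2}\le 1+\varepsilon$ a.s.

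Next comes the block estimate, which I expect to be the main obstacle. For a general $t\in[t_n,t_{n+1}]$ write $X^\ast(t)=X^\ast(t_n)+(X^\ast(t)-X^\ast(t_n))$; the first term is handled above, and for the increment I use (A4) (monotonicity of the events) and (A5). Along the partition $t_n=t_{0,n}<\cdots<t_{j_n,n}=t_{n+1}$ from (A5), the centered increments $\xi_k:=(X(t_{k,n})-X(t_{k-1,n}))-(b(t_{k,n})-b(t_{k-1,n}))$ are independent, and by Corollary \ref{crude} each satisfies a sub-exponential bound with parameter $u_k:=b(t_{k,n})-b(t_{k-1,n})\in[1,A]$; a Marcinkiewicz-type moment bound then yields \eqref{bill1} for a suitable even integer $\lambda_1$ and $\lambda_2=\lambda_1/2>1$. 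Lemma \ref{bill} converts this into a bound on the expected $\lambda_1$-th power of $\max_{m\le j_n}|\xi_1+\cdots+\xi_m|$, of order $(u_1+\cdots+u_{j_n})^{\lambda_1/2}=O(j_n^{\lambda_1/2})$. Combining with a union bound (or directly via the exponential version of the maximal inequality) and the summability of $(j_n\exp(-\varepsilon(a(t_n))^{1/2}))$ from (A5), one gets that $\sup_{t\in[t_n,t_{n+1}]}(X^\ast(t)-X^\ast(t_n))=o((a(t_n)g(t_n))^{1/2})$ a.s.; here one also needs $b(t_{n+1})-b(t_n)=o(a(t_n)g(t_n))$, which follows from Lemma \ref{lem:aux1}(b) (the increments $v_{n+1}-v_n$ are $o(a(t_n))$) together with (A4). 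Hence the subsequential $\limsup$ transfers to the full limit. Finally, invoking (B1) to pass between $a(t_n)$, $a(t_{n+1})$ and $a(t)$ for $t$ in the block (so that $\log\log a(t)\sim\log\log a(t_n)$ when $\mu=1$, resp.\ $\log a(t)\sim\log a(t_n)$ when $\mu>1$) and then letting $\varepsilon\downarrow 0$ and $\varrho\downarrow 0$ yields
\[
\limsup_{t\to\infty}\frac{X^\ast(t)}{(2(q+1)a(t)\log\log a(t))^{1/2}}\le 1\quad(\mu=1),\qquad \limsup_{t\to\infty}\frac{X^\ast(t)}{(2(\mu-1)a(t)\log a(t))^{1/2}}\le 1\quad(\mu>1)
\]
a.s., as claimed. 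The delicate points are the calibration of $\kappa$ and $\varrho$ so that all the relevant series converge, and making the block-maximum estimate uniform enough that the error is genuinely negligible against $(a(t_n)g(t_n))^{1/2}$; the monotonicity hypothesis (A4) is what makes the reduction to the finite partition legitimate.
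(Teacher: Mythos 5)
Your proposal follows essentially the same two\-/step route as the paper's proof: a Chernoff bound along the subsequence $(t_n)$ from \eqref{tn} using \eqref{eq:ss1}, Lemma \ref{lem:aux1}(a) and Borel--Cantelli, and then control of the oscillation over each block through the (A5) partition, a Rosenthal/Marcinkiewicz moment bound feeding Lemma \ref{bill}, Lemma \ref{lem:aux1}(b) for summability, and a union bound with Corollary \ref{crude} exploiting the summability of $j_n\exp(-\varepsilon(a(t_n))^{1/2})$. Two concrete points need repair, though. First, your tilt $\vartheta=(1+\varepsilon)(g(t_n)/a(t_n))^{1/2}$ is off by a factor $2^{1/2}$: with it the Chernoff exponent is only about $-(2^{1/2}-\tfrac12)(1+\varepsilon)^2 g(t_n)\approx -0.91(1+\varepsilon)^2g(t_n)$, which for small $\varepsilon$ is not $\le-(1+\varepsilon)g(t_n)$ and, combined with $\exp(-g(t_n))=O(n^{-(1-\kappa)})$, need not give a summable series in exactly the regime ($\varepsilon,\kappa$ small) you need. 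Taking $\vartheta=(1+\varepsilon)(2g(t_n)/a(t_n))^{1/2}$ (the paper's choice $\theta=2(1+\kappa)h_\varrho(a(t_n))$ in its normalization) makes your displayed bound correct, with the $\eee^{|\vartheta|}$ correction being $1+o(1)$.

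Second, the final passage from the subsequence to all $t$ cannot invoke (B1): it is not among the hypotheses of Proposition \ref{lilhalf1}, and in any case it only compares $a(t-1)$ with $a(t)$, not $a(t)$ with $a(t_n)$ across a whole block; for $\mu>1$ the equivalence $\log a(t)\sim\log a(t_n)$ on $[t_n,t_{n+1}]$ can genuinely fail, since $a(t)$ may grow to roughly $(a(t_n))^{\mu_\varrho}$ within a block. No such equivalence is needed: as in the paper, use (A2) to replace $a$ by an eventually nondecreasing equivalent and bound the denominator at $t$ from below by its value at $t_n$, i.e.\ a one\-/sided comparison
\begin{equation*}
\frac{X^\ast(t)}{(2a(t)h_\varrho(a(t)))^{1/2}}\le \frac{\sup_{u\in[t_n,t_{n+1}]}X^\ast(u)}{(2a(t_n)h_\varrho(a(t_n)))^{1/2}},\qquad t\in[t_n,t_{n+1}],
\end{equation*}
which is the only place (A2) is used. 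With these two fixes (and the $\varrho\downarrow 0$, $\kappa\downarrow 0$ limits you already indicate), your argument coincides with the paper's proof.
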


\begin{assertion}\label{lilhalf2}
Suppose (A1), (A3), (B1) and either (B2.1) or (B2.2). Then, with $\mu\geq 1$ and $q\geq 0$ as defined in \eqref{eq:infim} and \eqref{eq:infim2}, respectively,
\begin{equation}\label{lil11}
\limsup_{t\to\infty}\frac{X^\ast(t)}{(2(q+1)a(t) \log\log
a(t))^{1/2}}\geq 1\quad\text{and}\quad \limsup_{t\to\infty}\frac{X^\ast(t)}{(2(\mu-1)a(t)\log a(t))^{1/2}}\geq 1\quad \text{{\rm a.s.}}
\end{equation}
in the cases $\mu=1$ and $\mu>1$, respectively.
\end{assertion}
\begin{proof}[Proof of Proposition \ref{lilhalf1}]
In view of (A2) we can and do assume that $a$ is eventually nondecreasing. It suffices to show that, for each $\varrho\in (0,1)$ and each positive $\kappa$ sufficiently close to $0$,
\begin{equation}\label{princip}
\limsup_{n\to\infty}\frac{\sup_{u\in [t_n,\,t_{n+1}]}\,X^\ast(u)}{(2 a(t_n) h_\varrho(a(t_n)))^{1/2}}\leq 1+\kappa\quad\text{a.s.},
\end{equation}
where $t_n=t_n(\kappa,\mu)$ is as defined in \eqref{tn}, $h_\varrho=g_{1,\,q_\varrho}$ if $\mu$ in \eqref{eq:infim} is equal to $1$ and $h_\varrho=g_{\mu_\varrho}$ if $\mu>1$ (see \eqref{eq:defg} for the definitions of $g_{1,\,q_\varrho}$ and $g_{\mu_\varrho}$). Indeed, if this is true, then, for large enough $n$, $$\frac{X^\ast(t)}{(2 a(t) h_\varrho (a(t)))^{1/2}}\leq \frac{\sup_{u\in [t_n,\,t_{n+1}]}\,X^\ast(u)}{(2 a(t_n)h_\varrho (a(t_n)))^{1/2}}\leq 1+\kappa\quad \text{a.s.},$$ whenever $t\in [t_n,\,t_{n+1}]$ (the first inequality is the only place of our proof in which condition (A2) is used).

Relation \eqref{princip} will be obtained in two steps. First, we shall prove in Lemma \ref{inter1} that
\begin{equation}\label{princip1}
{\limsup}_{n\to\infty}\frac{X^\ast(t_n)}{(2 a(t_n)h_\varrho (a(t_n)))^{1/2}}\leq 1+\kappa\quad \text{{\rm a.s.}}
\end{equation}
Second, we shall show that
\begin{equation}\label{princip2}
	\lim_{n\to\infty}\frac{\sup_{u\in [t_n,\, t_{n+1}]}|X^\ast(u)-X^\ast(t_n)|}{(a(t_n)h_\varrho(a(t_n)))^{1/2}}=0\quad\text{a.s.}
\end{equation}

\begin{lemma}\label{inter1}
Suppose (A1) and (A3). Then relation \eqref{princip1} holds for any $\kappa\in (0, (\sqrt{5}-1)/2)$.
\end{lemma}
\begin{proof}
Fix any $\kappa\in (0,(\sqrt{5}-1)/2)$.
We claim that there exists a positive $\rho=\rho(\kappa)$ satisfying
\begin{equation}\label{eq:choiceofrho}
(1-\kappa)(1+\kappa)^2(2-\exp(2(1+\kappa)\rho))>1.
\end{equation}
To prove this, observe that $(1-\kappa)(1+\kappa)^2>1$. Choosing positive $\rho$ sufficiently close to $0$, we can make $2-\exp(2(1+\kappa)\rho)$ as close to $1$ as we wish and particularly ensure that $2-\exp(2(1+\kappa)\rho)>(1-\kappa)^{-1}(1+\kappa)^{-2}$.

Fix any $\theta\in\mr$. Inequality \eqref{eq:ss1} with $\vartheta=\theta/(2a(t)h_\varrho(a(t)))^{1/2}$ reads
\begin{equation*}
\me \exp\Big(\frac{\theta X^\ast(t)}{(2a(t)h_\varrho(a(t)))^{1/2}}\Big)\leq \exp\Big(\frac{\theta^2}{4h_\varrho(a(t))}\exp\Big(\frac{|\theta|}{(2a(t)h_\varrho(a(t)))^{1/2}}\Big)\Big).
\end{equation*}
Since $h_\varrho(a(t))/a(t)\leq 2 \rho^2$ for large enough $t$ and $\rho$ as in \eqref{eq:choiceofrho}, we infer, for such $t$,
\begin{equation}\label{eq:7}
\me \exp\Big(\frac{\theta X^\ast(t)}{(2a(t)h_\varrho(a(t)))^{1/2}}\Big)\leq \exp\Big(\frac{\theta^2}{4h_\varrho(a(t))}\exp\Big(\frac{\rho |\theta|}{h_\varrho(a(t))}\Big)\Big).
\end{equation}
By Markov's inequality, for large $n$,
\begin{multline*}
\mmp\Big\{\frac{X^\ast(t_n)}{(2a(t_n)h_\varrho(a(t_n)))^{1/2}}>1+\kappa\Big\}\leq \eee^{-(1+\kappa)\theta} \me \exp\Big(\theta \frac{X^\ast(t_n)}{(2a(t_n)h_\varrho(a(t_n)))^{1/2}}\Big)\\\leq \exp\Big(-(1+\kappa)\theta+\frac{\theta ^2}{4h_\varrho(a(t_n))}\exp\Big(\frac{\rho |\theta|}{h_\varrho(a(t_n))}\Big)\Big).
\end{multline*}
Putting $\theta=2(1+\kappa)h_\varrho(a(t_n))$ we obtain
\begin{multline*}
\mmp\Big\{\frac{X^\ast(t_n)}{(2a(t_n)h_\varrho(a(t_n)))^{1/2}}>1+\kappa\Big\}\leq \exp(-(1+\kappa)^2(2-\exp(2(1+\kappa)\rho))h_\varrho(a(t_n)))\\=O\Big(\frac{1}{n^{(1-\kappa)(1+\kappa)^2(2-\exp(2(1+\kappa)\rho))}}\Big).
\end{multline*}
Here, we have used $\exp(-h_\varrho(a(t_n)))=O(n^{-(1-\kappa)})$ as $n\to\infty$, see Lemma \ref{lem:aux1}(a).

Hence, by \eqref{eq:choiceofrho}, $$\sum_{n\geq n_1}\mmp\Big\{\frac{X^\ast(t_n)}{(2a(t_n)h_\varrho(a(t_n)))^{1/2}}>1+\kappa\Big\}<\infty$$ for some $n_1\in\mn$ large enough, and an appeal to the Borel-Cantelli lemma completes the proof of Lemma \ref{inter1}.
\end{proof}

Next, we shall prove a result which is stronger than \eqref{princip2}:
\begin{equation}\label{eq:3}
	\lim_{n\to\infty}\frac{\sup_{u\in [t_n,\, t_{n+1}]}|X^\ast(u)-X^\ast(t_n)|}{(a(t_n))^{1/2}}=0\quad\text{a.s.}
\end{equation}

\noindent {\sc Proof of \eqref{eq:3}.} Using a partition $t_n=t_{0,\,n}<\ldots<t_{j,\,n}=t_{n+1}$ defined in (A5) we obtain
\begin{multline*}
\sup_{u\in [t_n,\, t_{n+1}]}|X^\ast(u)-X^\ast(t_n)|\\=\max_{1\leq k\leq j}\sup_{v\in
[0,\,t_{k,\,n}-t_{k-1,\,n}]}|(X^\ast(t_{k-1,\,n})-X^\ast(t_n))+(X^\ast(t_{k-1,\,n}+v)-X^\ast(t_{k-1,\,n}))|\\\leq \max_{1\leq k\leq j}|X^\ast(t_{k-1,\,n})-X^\ast(t_n)|\\+\max_{1\leq k\leq j}\sup_{v\in
[0,\,t_{k,\,n}-t_{k-1,\,n}]}|(X^\ast(t_{k-1,\,n}+v)-X^\ast(t_{k-1,\,n}))|\quad\text{a.s.}
\end{multline*}
We first show that
\begin{equation}\label{eq:315}
\lim_{n\to\infty}\frac{\max_{1\leq k\leq j}|X^\ast(t_{k-1,\,n})-X^\ast(t_n)|}{(a(t_n))^{1/2}}=0\quad\text{a.s.}
\end{equation}
To this end, we need two additional auxiliary results.
\begin{lemma}\label{ineq1}
Suppose (A4). Let $r\in\mn$ and $t,s\geq 0$. Then
\begin{equation}\label{eq:5}
\me (X^\ast(t)-X^\ast(s))^{2r}\leq C_r \max(|b(t)- b(s)|^r, |b(t)-b(s)|)
\end{equation}
for a positive constant $C_r$ which does not depend on $t$ and $s$.
\end{lemma}
\begin{proof}
In view of the representation
\begin{equation*}
X^\ast(t)-X^\ast(s)=\sum_{k\geq 1}(\1_{A_k(t)}-\mmp(A_k(t))-\1_{A_k(s)}+\mmp(A_k(s)))=:\sum_{k\geq 1}\eta_k(s,t),
\end{equation*}
the variable $X^\ast(t)-X^\ast(s)$ is an infinite sum of independent centered random variables with finite $(2r)$th moment. If $r\geq 2$, then, by Rosenthal's inequality (Theorem 3 in \cite{Rosenthal:1970}),
$$\me (X^\ast(t)-X^\ast(s))^{2r}\leq C_r \max\Big(\Big(\sum_{k\geq 1}\me (\eta_k(s,t))^2\Big)^r, \sum_{k\geq 1}\me (\eta_k(s,t))^{2r}\Big).$$ If $r=1$, then $$\me (X^\ast(t)-X^\ast(s))^2= \sum_{k\geq 1}\me (\eta_k(s,t))^2,$$ so that the preceding inequality trivially holds with $C_1=1$.
For any event $A$,
\begin{equation*}
\me (\1_A-\mmp(A))^{2r}=(1-\mmp(A))^{2r}\mmp(A)+(\mmp(A))^{2r}(1-\mmp(A))\leq (1-\mmp(A))\mmp(A)\leq \mmp(A).
\end{equation*}
Since, under (A4), $\eta_k(t,s)=\1_{A_k(t\vee s)\backslash A_k(t\wedge s)}-\mmp(A_k(t\vee s)\backslash A_k(t\wedge s))$ we conclude that in this case, for $k\in\mn$, $$\me (\eta_k(t,s))^{2r}\leq \mmp(A_k(t\vee s)\backslash A_k(t\wedge s))=|\mmp(A_k(t))-\mmp(A_k(s))|.$$ This proves \eqref{eq:5}.
\end{proof}

\begin{lemma}\label{billappl}
Suppose (A4) and (A5). Then, for any integer $r\geq 2$, there exists a positive constant $A_r$ such that
\begin{equation}\label{ineq4}
\me (\max_{1\leq k\leq j}\,|X^\ast(t_{k-1,\,n})-X^\ast(t_n)|)^{2r}\leq A_r (v_{n+1}(\kappa,\mu)-v_n(\kappa,\mu))^r.
\end{equation}
Here, $j$ and $(t_{k,\,n})_{0\leq k\leq j}$ are as defined in (A5), and $v_n(\kappa,\mu)$ is as defined in \eqref{tn}.
\end{lemma}
\begin{proof}
We intend to apply Lemma \ref{bill} with $\lambda_1=2r$, $\lambda_2=r$, $N=j-1$ and $\xi_k:=X^\ast(t_{k,\,n})-X^\ast(t_{k-1,\,n})$ for $k\in\mn$. Let $0\leq l<m\leq j-1$. Recall that (A5) ensures that $b(t_{m,\,n})-b(t_{l,\,n})=\sum_{k=l+1}^m (b(t_{k,\,n})-b(t_{k-1,\,n}))\geq 1$. Using this and Lemma \ref{ineq1} we obtain
\begin{multline}
\me |\xi_{l+1}+\ldots+\xi_m|^{2r}=\me(X^\ast(t_{m,\,n})-X^\ast(t_{l,\,n}))^{2r}\leq C_r \max((b(t_{m,\,n})-b(t_{l,\,n}))^r, b(t_{m,\,n})-b(t_{l,\,n}))\\=C_r (b(t_{m,\,n})-b(t_{l,\,n}))^r=\Big(\sum_{k=l+1}^m u_k\Big)^r,\label{ineq3}
\end{multline}
that is, inequality \eqref{bill1} holds with $u_k:=C_r^{1/r}(b(t_{k,\,n})-b(t_{k-1,\,n}))$, $k\in\mn$, $k\leq j-1$, where $C_r$ is the constant defined in Lemma \ref{ineq1}. Now \eqref{ineq4} is an immediate consequence of Lemma \ref{bill} and the definition of $t_n$:
\begin{multline*}
\me (\max_{1\leq k\leq j}\,|X^\ast(t_{k-1,\,n})-X^\ast(t_n)|)^{2r}=\me (\max_{1\leq k\leq j-1}|\xi_1+\ldots+\xi_k|)^{2r}\leq A_{2r,\,r}\Big(\sum_{k=1}^{j-1} u_k\Big)^r\\=A_{2r,\,r} C_r (b(t_{j-1,\,n})-b(t_n))^r\leq A_{2r,\,r} C_r (v_{n+1}(\kappa,\mu)-v_n(\kappa,\mu))^r.
\end{multline*}
\end{proof}
\begin{rem}\label{rem:laiwei}
We have learned the idea of using Lemma \ref{bill} in the present context from the proof of Theorem 4 (i) in \cite{Lai+Wei:1982}. The cited result is concerned with an upper half of a LIL for a rather general discrete-time model and as such cannot be used to prove \eqref{eq:3}.
\end{rem}

An application of Markov's inequality yields, for any $r>0$ and all $\varepsilon>0$,
\begin{multline*}
\mmp\Big\{\max_{1\leq k\leq j}|X^\ast(t_{k-1,\,n})-X^\ast(t_n)|>\varepsilon (a(t_n))^{1/2}\Big\}\leq \frac{\me (\max_{1\leq k\leq j}\,|X^\ast(t_{k-1,\,n})-X^\ast(t_n)|)^{2r}}{\varepsilon^{2r}(a(t_n))^r}.
\end{multline*}
In view of Lemma \ref{billappl} and Lemma \ref{lem:aux1}(b), there exists an integer $r\geq 2$ such that the right-hand side forms a summable sequence. This in combination with the Borel-Cantelli lemma ensures \eqref{eq:315}.

To complete the proof of \eqref{eq:3} it is enough to show that
$$\lim_{n\to\infty}\frac{\max_{1\leq k\leq j}\sup_{v\in [0,\, t_{k,\,n}-t_{k-1,\,n}]}|X^\ast(t_{k-1,\,n}+v)-X^\ast(t_{k-1,\,n})|}{(a(t_n))^{1/2}}=0\quad \text{a.s.}$$ Condition (A4) ensures that the process $(X(t))_{t\geq 0}$ is a.s.\ nondecreasing, and the function $b$ is nondecreasing.
Hence,
\begin{multline*}
\sup_{v\in [0,\,t_{k,\,n}-t_{k-1,\,n}]}|X^\ast(t_{k-1,\,n}+v)-X^\ast(t_{k-1,\,n})|\leq \sup_{v\in [0,\,t_{k,\,n}-t_{k-1,\,n}]}(X(t_{k-1,\,n}+v)-X(t_{k-1,\,n}))\\+\sup_{v\in [0,\,t_{k,\,n}-t_{k-1,\,n}]}(b(t_{k-1,\,n}+v)-b(t_{k-1,\,n}))\leq X(t_{k,\,n})-X(t_{k-1,\,n})+b(t_{k,\,n})-b(t_{k-1,\,n})\quad\text{a.s.}
\end{multline*}
According to (A1) and (A5)
\begin{equation*}
\frac{\max_{1\leq k\leq j}\,(b(t_{k,\,n})-b(t_{k-1,\,n}))}{(a(t_n))^{1/2}}\leq \frac{A}{(a(t_n))^{1/2}}~\to 0,\quad n\to\infty.
\end{equation*}
Finally, for all $\varepsilon>0$,
\begin{multline*}
\mmp\big\{\max_{1\leq k\leq j}(X(t_{k,\,n})-X(t_{k-1,\,n}))>\varepsilon (a(t_n))^{1/2}\big\}\le \sum_{k=1}^{j} \mmp\big\{X(t_{k,\,n})-X(t_{k-1,\,n})>\varepsilon (a(t_n))^{1/2}\big\}\\
\le \eee^{-\varepsilon (a(t_n))^{1/2}} \sum_{k=1}^{j}  \me \eee^{X(t_{k,\,n})-X(t_{k-1,\,n})}\le \eee^{-\varepsilon (a(t_n))^{1/2}} \sum_{k=1}^{j} \exp((\eee-1)(b(t_{k,\,n})-b(t_{k-1,\,n})))\\\leq \exp(A(\eee-1))j \eee^{-\varepsilon (a(t_n))^{1/2}}.
\end{multline*}
We have used Markov's inequality for the second inequality, Corollary \ref{crude} for the third and (A5) for the fourth. The right-hand side is summable in $n$ by another appeal to (A5) and thereupon
$$\lim_{n\to\infty}\frac{\max_{1\leq k\leq j}(X(t_{k,\,n})-X(t_{k-1,\,n}))}{(a(t_n))^{1/2}}=0\quad\text{a.s.}$$
by the Borel-Cantelli lemma. This completes the proof of \eqref{eq:3} and that of Proposition \ref{lilhalf1}.
\end{proof}

To unify the subsequent proofs for the cases $\mu=1$ and $\mu>1$, we introduce an additional notation. Put $h_0:=g_{1,\,q}$ if $\mu$ defined by \eqref{eq:infim} is equal to $1$ (here, $q$ is as defined in \eqref{eq:infim2}) and $h_0:=g_\mu$ if $\mu>1$.
\begin{lemma}\label{lem:l1}
Suppose (A1) and (A3) and let $\mu\geq 1$ be as given in \eqref{eq:infim}. For large $t>0$ and some $\varsigma\in [0,1)$, let $R_\varsigma(t)$ denote a set of positive integers satisfying \eqref{eq:onevar} if $\varsigma\in (0,1)$ and \eqref{eq:one} and $R_\varsigma(t)\neq \mn$ if $\varsigma=0$. Then
\begin{equation*}\label{eq:inter}
{\lim\inf}_{n\to \infty} \frac 1{(2a(\tau_n)h_0(a(\tau_n)))^{1/2}} \sum_{k\in (R_\varsigma(\tau_n))^c} \eta_k(\tau_n)\geq -(\varsigma(1-\gamma^2))^{1/2} \quad\text{{\rm a.s.}},
\end{equation*}
where $\tau_n=\tau_n(\gamma,\mu)$, with any $\gamma\in (0, (\sqrt{5}-1)/2)$, is as defined in \eqref{eq:tau}, and $(R_\varsigma(\tau_n))^c:=\mn\backslash R_\varsigma(\tau_n)$ denotes the complement of $R_\varsigma(\tau_n)$ in $\mn$.
\end{lemma}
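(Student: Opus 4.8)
The plan is to establish the equivalent statement for $Y_n:=\sum_{k\in(R_\varsigma(\tau_n))^c}\eta_k(\tau_n)$ and $D_n:=(2a(\tau_n)h_0(a(\tau_n)))^{1/2}$, namely $\liminf_{n\to\infty}Y_n/D_n\geq-(\varsigma(1-\gamma^2))^{1/2}$ a.s. This is the ``convergent'' half of an iterated-logarithm estimate and I would obtain it by a Borel--Cantelli argument built on the exponential bound of Lemma~\ref{lem:ineq}. The first step is to compute $\var Y_n$: since $Y_n$ and $\sum_{k\in R_\varsigma(\tau_n)}\eta_k(\tau_n)$ are sums over disjoint index sets, they are independent, and their sum is $X^\ast(\tau_n)$, so $\sigma_n^2:=\var Y_n=a(\tau_n)-\var\big(\sum_{k\in R_\varsigma(\tau_n)}\1_{A_k(\tau_n)}\big)$. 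Applying \eqref{eq:onevar} (if $\varsigma\in(0,1)$) or \eqref{eq:one} (if $\varsigma=0$) along the sequence $\tau_n\to\infty$ gives
\begin{equation*}
\lim_{n\to\infty}\frac{\sigma_n^2}{a(\tau_n)}=\varsigma .
\end{equation*}

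Next, repeating verbatim the computation in the proof of Lemma~\ref{lem:ineq} for the subfamily indexed by $(R_\varsigma(\tau_n))^c$ yields $\me\exp(\vartheta Y_n)\leq\exp(2^{-1}\vartheta^2\exp(|\vartheta|)\sigma_n^2)$ for all $\vartheta\in\mr$. Fix $c>0$ and consider, for those $n$ with $\sigma_n^2>0$ (for the rest $Y_n=0$ a.s.\ and nothing is to prove), Markov's inequality applied to $\exp(-\vartheta Y_n)$ with $\vartheta=\vartheta_n:=cD_n/\sigma_n^2$:
\begin{equation*}
\mmp\{Y_n\leq-cD_n\}\leq\exp\Big(-\frac{c^2D_n^2}{\sigma_n^2}\big(1-\tfrac12\exp(\vartheta_n)\big)\Big).
\end{equation*}
Assume first $\varsigma>0$. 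Then $\vartheta_n\to0$, because $\sigma_n^2\sim\varsigma a(\tau_n)$, $a(\tau_n)\to\infty$ and $h_0(t)/t\to0$ as $t\to\infty$ force $\vartheta_n\sim(c/\varsigma)(2h_0(a(\tau_n))/a(\tau_n))^{1/2}\to0$; hence $1-\tfrac12\exp(\vartheta_n)\to\tfrac12$. Also $\sigma_n^2\leq(\varsigma+\delta)a(\tau_n)$ eventually for every $\delta>0$, so $D_n^2/\sigma_n^2\geq2h_0(a(\tau_n))/(\varsigma+\delta)$. Combining, for every $\delta>0$ and all large $n$,
\begin{equation*}
\mmp\{Y_n\leq-cD_n\}\leq\exp\Big(-\frac{(1-\delta)c^2h_0(a(\tau_n))}{\varsigma+\delta}\Big)=O\big(n^{-(1-\delta)c^2(1+\gamma)/(\varsigma+\delta)}\big),
\end{equation*}
the last equality being Lemma~\ref{lem:equality} in the form $\exp(-h_0(a(\tau_n)))=O(n^{-(1+\gamma)})$. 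Letting $\delta\downarrow0$, the right-hand side is summable as soon as $c^2>\varsigma/(1+\gamma)$, so Borel--Cantelli gives $\liminf_{n\to\infty}Y_n/D_n\geq-c$ a.s.\ for every such $c$, i.e.\ $\liminf_{n\to\infty}Y_n/D_n\geq-(\varsigma/(1+\gamma))^{1/2}$ a.s.

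Finally one compares the two exponents, and this is exactly where the hypothesis $\gamma\in(0,(\sqrt{5}-1)/2)$ is used: it is equivalent to $(1-\gamma)(1+\gamma)^2>1$, hence to $\varsigma/(1+\gamma)\leq\varsigma(1-\gamma^2)$, so $\liminf_{n\to\infty}Y_n/D_n\geq-(\varsigma/(1+\gamma))^{1/2}\geq-(\varsigma(1-\gamma^2))^{1/2}$ a.s., which is the claim. For the boundary case $\varsigma=0$ one has $\sigma_n^2=o(a(\tau_n))$, so $Y_n$ is strongly concentrated and $\mmp\{Y_n\leq-cD_n\}$ should be summable for every $c>0$, giving $\liminf_{n\to\infty}Y_n/D_n\geq0$; the only new point is that $\vartheta_n=cD_n/\sigma_n^2$ need no longer tend to $0$, which I would fix by replacing it with $\min(cD_n/\sigma_n^2,\vartheta_0)$ for a fixed $\vartheta_0\in(0,\log2)$ (so that $1-\tfrac12\exp(\vartheta_0)>0$) and treating the two resulting regimes separately — in the truncated regime using that $D_n\to\infty$ at a polynomial rate in $n$, and in the untruncated one using $\sigma_n^2/a(\tau_n)\to0$ to push $D_n^2/\sigma_n^2$ well beyond $2h_0(a(\tau_n))$ before invoking Lemma~\ref{lem:equality}. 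The only genuinely delicate point in the whole argument is this calibration of the Chernoff parameter: it must be small enough that the factor $\exp(|\vartheta_n|)$ from Lemma~\ref{lem:ineq} is asymptotically harmless, yet large enough that Lemma~\ref{lem:equality} converts the resulting Gaussian-type bound into a summable negative power of $n$.
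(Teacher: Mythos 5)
Your proposal is correct and follows essentially the same route as the paper's proof: an exponential (Chernoff) bound for the complement sum obtained by rerunning the proof of Lemma \ref{lem:ineq}, converted into a summable negative power of $n$ via the first part of Lemma \ref{lem:equality}, and finished with the Borel--Cantelli lemma. The only differences are in calibration of the Chernoff parameter — the paper fixes $u=2(1-\gamma^2)^{1/2}h_0(a(\tau_n))$ together with a small $\rho$ satisfying \eqref{eq:choiceofrho2}, while you take $\vartheta_n=cD_n/\sigma_n^2$ (truncated at a fixed $\vartheta_0<\log 2$ when $\varsigma=0$), which even yields the marginally sharper constant $(\varsigma/(1+\gamma))^{1/2}$ and handles the degenerate case $\varsigma=0$ cleanly.
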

\begin{proof}
Fix any $\gamma\in (0, (\sqrt{5}-1)/2)$. Arguing in the same way as we did when justifying \eqref{eq:choiceofrho}, we prove that there exists a positive $\rho=\rho(\gamma)$ satisfying
\begin{equation}\label{eq:choiceofrho2}
(1-\gamma^2)(1+\gamma)(2-\exp(2(1-\gamma^2)^{1/2}\rho))>1.
\end{equation}
For large $t$, put $a^\ast_\varsigma(t):={\rm Var}\,\Big(\sum_{k\in (R_\varsigma(t))^c}\1_{A_k(t)}\Big)=\sum_{k\in (R_\varsigma(t))^c} \me (\eta_k(t))^2$ and then $$Z_{1,\,\varsigma}(t):=\frac 1{(2a^\ast_\varsigma(t)h_0(a(t)))^{1/2}} \sum_{k\in (R_\varsigma(t))^c} \eta_k(t).$$ In view of \eqref{eq:onevar} or \eqref{eq:one} we have to show that
\begin{equation*}
\limsup_{n\to \infty}\, (-Z_{1,\,\varsigma}(\tau_n))\leq (1-\gamma^2)^{1/2} \quad\text{{\rm a.s.}}
\end{equation*}
According to the Borel-Cantelli lemma, the latter is ensured by
\begin{equation}\label{eq:w1}
\sum_{n\ge n_1} \P\{-Z_{1,\,\varsigma}(\tau_n)>(1-\gamma^2)^{1/2}\} <\infty
\end{equation}
for some $n_1$ large enough. To this end, we obtain a counterpart of \eqref{eq:7}, for $u\in\mr$ and $t$ large enough to ensure $h_0(a(t))/a^\ast_\varsigma(t)\le 2\rho^2$, where $\rho$ is as in \eqref{eq:choiceofrho2},
\begin{equation*}\label{eq:eZ}
\E {\rm e}^{u (-Z_{1,\,\varsigma}(t))} \le \exp \Big( \frac{u^2}{4h_0(a(t))}\exp \Big( \frac{\rho|u|}{h_0(a(t))} \Big)\Big).
\end{equation*}
Invoking the Markov inequality with $u =2(1-\gamma^2)^{1/2} h_0(a(\tau_n))$ and the first part of Lemma \ref{lem:equality} yields, for large $n$,
\begin{multline*}
\P\{-Z_{1,\,\varsigma}(\tau_n)>(1-\gamma^2)^{1/2}\} \le {\rm e}^{-u(1-\gamma^2)^{1/2}} \E {\rm e}^{u(-Z_{1,\,\varsigma}(\tau_n))}\\ \le \exp\big(-(1 - \gamma^2)(2-\exp(2(1-\gamma^2)^{1/2}\rho))h_0(a(\tau_n))\big)=O\Big(\frac{1}{n^{(1-\gamma^2)(1+\gamma)(2-\exp(2(1-\gamma^2)^{1/2}\rho))}}\Big).
\end{multline*}
In view of \eqref{eq:choiceofrho2} this entails \eqref{eq:w1}.
\end{proof}

\begin{lemma}\label{lem:new}
Suppose (A1), (A3), (B1) and either (B2.1) or (B2.2)
and let $\mu\geq 1$ be as given in \eqref{eq:infim}. For sufficiently small $\delta >0$, pick $\gamma\in (0, (\sqrt{5}-1)/2) >0$ satisfying $(1+\gamma)(1-\delta^2/8)<1$. Then
$$\limsup_{n\to \infty} \frac 1{(2a(\tau_n)h_0(a(\tau_n)))^{1/2}} \sum_{k\ge 1} \eta_k(\tau_n) \ge 1-\delta \quad\text{{\rm a.s.}},$$ where $\tau_n=\tau_n(\gamma,\mu)$
is as defined in \eqref{eq:tau}.
\end{lemma}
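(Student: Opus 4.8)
The plan is to prove the lower bound along the subsequence $(\tau_n)=(\tau_n(\gamma,\mu))$ by applying the converse Borel--Cantelli lemma to a family of \emph{independent} events and then to account for the difference between the full sum and its independent part through Lemma \ref{lem:l1}. First I would pick $\varsigma\in(0,\varsigma_0)$ small (and $\varsigma=0$ if we are in case (B22)), let $R:=R_\varsigma$ be the family furnished by (B21) or (B22), and decompose, for $n\ge n_0=n_0(\varsigma,\gamma)$,
\[
\sum_{k\ge1}\eta_k(\tau_n)=Y_n+Z_n,\qquad Y_n:=\sum_{k\in R(\tau_n)}\eta_k(\tau_n),\quad Z_n:=\sum_{k\in(R(\tau_n))^c}\eta_k(\tau_n).
\]
By the first parts of (B21)/(B22) the sets $R(\tau_{n_0}),R(\tau_{n_0+1}),\dots$ are pairwise disjoint, so $(Y_n)_{n\ge n_0}$ are independent (being functions of disjoint sub-collections of the independent families $(A_k(\cdot))_k$), and ${\rm Var}\,Y_n=\sum_{k\in R(\tau_n)}\me\eta_k(\tau_n)^2\sim(1-\varsigma)a(\tau_n)$ by \eqref{eq:onevar} or \eqref{eq:one}. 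Writing $c_n:=(2a(\tau_n)h_0(a(\tau_n)))^{1/2}$, Lemma \ref{lem:l1} gives $\liminf_n Z_n/c_n\ge-(\varsigma(1-\gamma^2))^{1/2}$ a.s.\ (when $R(\tau_n)=\mn$ this is trivial, since then $Z_n\equiv0$). Since $\limsup(x_n+y_n)\ge\limsup x_n+\liminf y_n$ for real sequences, it then suffices to prove
\[
\limsup_{n\to\infty}\frac{Y_n}{c_n}\ \ge\ \Big(\frac{1-\varsigma}{1+\gamma}\Big)^{1/2}\quad\text{a.s.},
\]
after which, for $\delta$ small, the choice of $\gamma$ (together with the choice of $\varsigma$ small under (B21)) makes $((1-\varsigma)/(1+\gamma))^{1/2}-(\varsigma(1-\gamma^2))^{1/2}\ge1-\delta$; under (B22) this reduces to $(1+\gamma)^{-1/2}\ge1-\delta$, which is implied by $(1+\gamma)(1-\delta^2/8)<1$ and $\sqrt{1-\delta^2/8}\ge1-\delta$.

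For the displayed $\limsup$, fix $\theta<((1-\varsigma)/(1+\gamma))^{1/2}$. As the $Y_n$ are independent, the converse Borel--Cantelli lemma reduces the task to showing $\sum_n\mmp\{Y_n>\theta c_n\}=\infty$. Here I would use a moderate-deviation lower bound for the sum $Y_n$ of independent, centered, uniformly bounded ($|\eta_k(\tau_n)|\le1$) random variables: since ${\rm Var}\,Y_n\asymp a(\tau_n)\to\infty$ while $\theta c_n\asymp(a(\tau_n)h_0(a(\tau_n)))^{1/2}$ satisfies $\theta c_n/({\rm Var}\,Y_n)^{1/2}\to\infty$ and $\theta c_n=o({\rm Var}\,Y_n)$ (because $h_0(a(\tau_n))=O(\log a(\tau_n))=o(a(\tau_n))$), one has
\[
\mmp\{Y_n>\theta c_n\}\ \ge\ \exp\Big(-\frac{(\theta c_n)^2}{2\,{\rm Var}\,Y_n}\,(1+o(1))\Big)=\exp\Big(-\frac{\theta^2}{1-\varsigma}\,h_0(a(\tau_n))\,(1+o(1))\Big),
\]
where the second equality also absorbs the $o(1)$ coming from ${\rm Var}\,Y_n\sim(1-\varsigma)a(\tau_n)$. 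Because $\theta^2(1+\gamma)/(1-\varsigma)<1$, for large $n$ the exponent is bounded above by $(1+\gamma)^{-1}h_0(a(\tau_n))$, so $\mmp\{Y_n>\theta c_n\}\ge\exp(-(1+\gamma)^{-1}h_0(a(\tau_n)))$. By Lemma \ref{lem:equality} (this is where (B1) enters), $\exp(-h_0(a(\tau_n)))\sim n^{-(1+\gamma)}$, hence $\exp(-(1+\gamma)^{-1}h_0(a(\tau_n)))\sim n^{-1}$ and the series diverges. Thus $\{Y_n>\theta c_n\}$ occurs for infinitely many $n$ a.s., i.e.\ $\limsup_n Y_n/c_n\ge\theta$ a.s., and letting $\theta\uparrow((1-\varsigma)/(1+\gamma))^{1/2}$ along a countable set completes this step.

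The one genuinely analytic ingredient is the moderate-deviation bound, which I would obtain by the classical exponential change of measure: with $\psi_n(h):=\log\me e^{hY_n}$ (finite for all $h$ by Lemma \ref{lem:ineq}), choose $h_n$ with $\psi_n'(h_n)=\theta c_n$; a Taylor expansion of $\psi_n$ at $0$ (the cubic and higher terms being negligible since $h_n\to0$ and $|\eta_k(\tau_n)|\le1$) gives $h_n\sim\theta c_n/{\rm Var}\,Y_n\to0$ and $-h_n\theta c_n+\psi_n(h_n)=-\tfrac{(\theta c_n)^2}{2{\rm Var}\,Y_n}(1+o(1))$. Tilting $\mmp$ by $e^{h_nY_n}$ yields a law $\Q_n$ under which $Y_n$ has mean $\theta c_n$ and variance $\sim{\rm Var}\,Y_n$, so $\Q_n\{\theta c_n<Y_n<\theta c_n+({\rm Var}\,Y_n)^{1/2}\}$ is bounded away from $0$ by Chebyshev's inequality, and $\mmp\{Y_n>\theta c_n\}\ge e^{-h_n(\theta c_n+({\rm Var}\,Y_n)^{1/2})+\psi_n(h_n)}\,\Q_n\{\theta c_n<Y_n<\theta c_n+({\rm Var}\,Y_n)^{1/2}\}\ge\exp(-\tfrac{(\theta c_n)^2}{2{\rm Var}\,Y_n}(1+o(1)))$. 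The main obstacle is precisely controlling this $(1+o(1))$ in the exponent uniformly in $n$---in particular ensuring the cubic correction to $\psi_n$ is $o(h_n^2{\rm Var}\,Y_n)$---and it is essential to keep $\theta$ \emph{strictly} below the threshold, since at the threshold the $(1+o(1))$-correction could turn the borderline-divergent series $\sum n^{-1}$ into a convergent one. The remaining points---the disjointness in (B21)/(B22) holding only for $n\ge n_0$, so all i.o.\ assertions concern the tail, and the elementary verification of $((1-\varsigma)/(1+\gamma))^{1/2}-(\varsigma(1-\gamma^2))^{1/2}\ge1-\delta$ for the chosen $\varsigma,\gamma$---are routine.
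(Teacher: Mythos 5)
Your proposal follows essentially the same route as the paper's proof: the same decomposition into the sum over $R_\varsigma(\tau_n)$ and its complement, Lemma \ref{lem:l1} for the complement, independence of the blocks coming from the first parts of (B21)/(B22) plus the converse Borel--Cantelli lemma, a tail lower bound obtained by exponential tilting, and Lemma \ref{lem:equality} (this is where (B1) enters) to turn $h_0(a(\tau_n))$ into $(1+\gamma)\log n+o(1)$ so that the relevant series diverges; your optimization over $\theta$ and $\varsigma$ is equivalent bookkeeping to the paper's explicit choice $u=(1-\delta/2)g(t)$ followed by the limit $\varsigma\to0+$.

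The one step that does not hold as written is the claim that $\Q_n\{\theta c_n<Y_n<\theta c_n+(\var Y_n)^{1/2}\}$ is bounded away from $0$ \emph{by Chebyshev's inequality}: knowing only that $Y_n$ has $\Q_n$-mean $\theta c_n$ and $\Q_n$-variance $\sim\var Y_n$ does not bound a one-sided window of one standard deviation above the mean away from zero (the tilted law could place nearly all of its mass just below the mean). The paper closes exactly this point by proving a central limit theorem under the tilted measure, via convergence of moment generating functions, which gives the window probability the lower bound $1/3$ (see the computation leading to \eqref{eq:p12}). Alternatively, you can keep Chebyshev by using the symmetric window $\{|Y_n-\theta c_n|<2(\var_{\Q_n}Y_n)^{1/2}\}$, whose $\Q_n$-probability is at least $3/4$, and lowering the threshold in the event to $\theta c_n-2(\var_{\Q_n}Y_n)^{1/2}$; since $(\var Y_n)^{1/2}=o(c_n)$ and $h_n(\var Y_n)^{1/2}=O((h_0(a(\tau_n)))^{1/2})=o(h_0(a(\tau_n)))$, this changes neither the exponential rate nor the conclusion $\limsup_n Y_n/c_n\ge\theta$ a.s. With that repair, the remaining points you flag (existence of the tilt $h_n$, negligibility of the cubic term, keeping $\theta$ strictly below the threshold) are handled correctly and the argument goes through.
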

\begin{proof}
We intend to show that
\begin{equation}\label{eq:relat}
\limsup_{n\to \infty} \frac 1{(2a(\tau_n)h_0(a(\tau_n)))^{1/2}} \sum_{k\ge 1} \eta_k(\tau_n) \ge -(\varsigma (1-\gamma^2))^{1/2}+(1-\varsigma)^{1/2}
(1-\delta)\quad\text{{\rm a.s.}}
\end{equation}
with either any $\varsigma>0$ close to $0$ which appears in condition (B2.1) or $\varsigma=0$ if condition (B2.2)
holds. In the former case the claim of the lemma follows on sending $\varsigma$ to $0+$.

For large $t$, put $a_\varsigma(t):={\rm Var}\,\Big(\sum_{k\in R_\varsigma(t)}\eta_k(t)\Big)$. In view of Lemma \ref{lem:l1} and $$\limsup_{n\to\infty}\,(x_n+y_n)\geq \liminf_{n\to\infty}\, x_n+\limsup_{n\to\infty}\, y_n$$ which holds true for any numeric sequences $(x_n)$ and $(y_n)$, relation \eqref{eq:relat} follows if we can check that
\begin{equation}\label{eq:ss9}
\limsup_{n\to\infty}\, Z_{2,\,\varsigma}(\tau_n) \ge 1-\delta\quad\text{a.s.},
\end{equation}
where, for large $t$, $$Z_{2,\,\varsigma}(t)= \frac 1{(2a_\varsigma(t)h_0(a(t)))^{1/2}}  \sum_{k\in R_\varsigma(t)}\eta_k(t).$$ We shall prove that, for large $t$,
\begin{equation}\label{eq:r3}
\P\{Z_{2,\,\varsigma}(t)>1-\delta \}\ge  3^{-1} \eee^{-(1-\delta^2/8) h_0(a(t))}.
\end{equation}
By Lemma \ref{lem:equality}, conditions (A1) and (B1) ensure that $\eee^{-(1-\delta^2/8) h_0(a(\tau_n))}\sim n^{-(1+\gamma)(1-\delta^2/8)}$ as $n\to\infty$.
As a consequence, $$\sum_{n\ge n_1}\P\{Z_{2,\,\varsigma}(\tau_n)>1-\delta\}= \infty$$ for some $n_1 \ge n_0$ large enough. In view of the first part of (B2.1) or (B2.2), the random variables $Z_{2,\,\varsigma}(\tau_{n_0})$, $Z_{2,\,\varsigma}(\tau_{n_0+1}),\ldots$ are independent. Hence, divergence of the series entails \eqref{eq:ss9} by the converse part of the Borel-Cantelli lemma.
		
As a preparation for the proof of \eqref{eq:r3}, consider the event
$$U^{(\varsigma)}_t: = \big\{ 1-\delta < Z_{2,\,\varsigma}(t) \le 1 \big\} = \big\{(1-\delta)g(t)<  W_\varsigma(t)/(a_\varsigma(t))^{1/2} \le g(t)\big\},$$
where $g(t): =\big(2h_0(a(t)) \big)^{1/2}$ and $$W_\varsigma(t):= \sum_{k\in R_\varsigma(t)} \eta_k(t).
$$
Recall that $s_0$ is a positive number appearing in the definition of $R_\varsigma(t)$, see assumption (B2.1). Given $u\in\mr$ and large $t$ (that is, $t>s_0$ such that $a(t)>0$ and $a_\varsigma(t)>0$) introduce a new probability measure $\Q_{t,u}$ on $(\Omega,\mathcal{F})$ by the equality
$$\Q_{t,u}(A) = \frac{\E \left({\rm e}^{u  W_\varsigma (t)/ (a_\varsigma(t))^{1/2}}\1_{A}\right)}{\E {\rm e}^{u  W_\varsigma(t)/(a_\varsigma(t))^{1/2}}}=
\frac{\int_{A}{\rm e}^{u  W_\varsigma(t)/(a_\varsigma(t))^{1/2}}{\rm d}\mmp}{\int_{\Omega}{\rm e}^{u W_\varsigma(t)/(a_\varsigma(t))^{1/2}}{\rm d}\mmp},\quad A\in\mathcal{F}.
$$
Then, for $u\ge 0$,
\begin{multline}\label{eq:ss10}
\left(\E {\rm e}^{u  (W_\varsigma(t) /(a_\varsigma(t))^{1/2} -g(t))}\right)\Q_{t,u}(U^{(\varsigma)}_t)={\rm e}^{-ug(t)}\int_{U^{(\varsigma)}_t}{\rm e}^{uW_\varsigma(t)/(a_\varsigma(t))^{1/2}}{\rm d}\mmp\\\leq {\rm e}^{-ug(t)}\int_{U^{(\varsigma)}_t}{\rm e}^{ug(t)}{\rm d}\mmp=\mmp(U^{(\varsigma)}_t)\leq \P\{Z_{2,\,\varsigma}(t)>1-\delta\}.
\end{multline}
We shall show that, upon choosing an appropriate positive $u=u(t)=O((h_0(a(t)))^{1/2})$, the expectation on the left-hand side is bounded by ${\rm e}^{-(1-\delta^2/8)h_0(a(t))}$ from below and also prove that $\Q_{t,u(t)}(U^{(\varsigma)}_t)\geq 1/3$, thereby deriving~\eqref{eq:r3}.
		
First, we check that, with $u(t)=O((h_0(a(t)))^{1/2})$, $u(t)\in\mr$ (thus, for the time being, we do not assume that $u(t)$ takes positive values),
\begin{equation}\label{eq:p1}
\E  {\rm e}^{u(t)  W_\varsigma(t)/(a_\varsigma(t))^{1/2}} = {\rm e}^{(u(t))^2/2 + (u(t))^2 h(t)}, \quad t\to \infty
\end{equation}
for some function $h$ satisfying $\lim_{t\to \infty} h(t) = 0$. To this end, put $$\xi_k(t):= \frac{\eta_k(t)}{(a_\varsigma(t))^{1/2}}$$
and recall that $\eta_k(t):=\1_{A_k(t)}-\mmp(A_k(t))$. Since $|\eta_k(t)|\le 1$ a.s., $$|u(t) \xi_k(t)|=O\Big(\Big(\frac{h_0(a(t))}{a_\varsigma(t)}\Big)^{1/2}\Big)= o(1), \quad t\to \infty\quad\text{a.s.}$$ uniformly in $k$. Recalling the asymptotic expansions
$${\rm e}^x = 1 + x + x^2/2 + o(x^2)\quad\text{and}\quad\log(1+x)=x+O(x^2),\quad x\to 0,$$
we infer
\begin{multline}\label{eq:ss3}
\E \eee^{u(t) W_\varsigma(t)/(a_\varsigma(t))^{1/2}} = \prod_{k\in R_\varsigma(t)} \E \exp( u(t) \xi_k(t))\\ = \prod_{k\in R_\varsigma(t)} \E \big(1+u(t)\xi_k(t)+ (u(t))^2 (\xi_k(t))^2\big(1/2 + o(1)\big)\big)\\= \exp \Big(\sum_{k\in R_\varsigma(t)} \log \big(1+u(t)^2\E (\xi_{k}(t))^2 \big(1/2 + o(1)\big) \big)\Big)\\ = \exp \Big(  (u(t))^2\big(1/2 + o(1)\big)\sum_{k\in R_\varsigma(t)} \E (\xi_k(t))^2+(u(t))^4 O \Big( \sum_{k\in R_\varsigma(t)} (\E (\xi_k(t))^2 )^2\Big)\Big).
\end{multline}
Since, for large $t$,
\begin{equation*}
\sum_{k\in R_\varsigma(t)} \E (\xi_k(t))^2=\frac{\sum_{k\in R_\varsigma(t)} \E (\eta_k(t))^2}{a_\varsigma(t)}=1
\end{equation*}
and $\me (\eta_k(t))^2 \le 1$, we conclude that
\begin{equation*}
(u(t))^2 \sum_{k\in R_\varsigma(t)} (\E (\xi_k(t))^2)^2= O(h_0(a(t)))\frac{\sum_{k\in R_\varsigma(t)} \E (\eta_k(t))^2}{(a_\varsigma(t))^2}=o(1), \quad t\to\infty
\end{equation*}
having utilized \eqref{eq:onevar} or \eqref{eq:one} for the last equality. Now \eqref{eq:ss3} entails \eqref{eq:p1}.
		
Observe that, with $u\in\mr$ fixed, formula \eqref{eq:p1} reads $$\lim_{t\to \infty}\E \eee^{u  W_\varsigma(t)/(a_\varsigma(t))^{1/2}}= \eee^{u^2/2},$$ which implies a central limit theorem $W_\varsigma(t)/(a_\varsigma(t))^{1/2}{\overset{{\rm d}}\longrightarrow} {\rm Normal} (0,1)$ as $t\to \infty$. Here, as before, ${\rm Normal} (0,1)$ denotes a random variable with the standard normal distribution.
		
We are ready to prove \eqref{eq:r3}. Recall that
$g(t)=\big(2h_0(a(t)) \big)^{1/2}$ and put
$$u(t)= (1-\delta/2) g(t)=O((h_0(a(t)))^{1/2}).$$ Formula \eqref{eq:p1} implies that
\begin{equation}\label{eq:p11}
\E {\rm e}^{u(t) (W_\varsigma(t)/(a_\varsigma(t))^{1/2}-g(t))}={\rm e}^{-(1-\delta^2/4) h_0(a(t)) + o(h_0(a(t)))} \ge {\rm e}^{-(1-\delta^2/8) h_0(a(t))}
\end{equation}
for large $t$. Next, we intend to show the $\Q_{t,u(t)}$-distribution of $W_\varsigma(t)/(a_\varsigma(t))^{1/2}-(u(t)  + 2u(t) h(t))$ converges weakly as $t\to \infty$ to the $\mmp$-distribution of ${\rm Normal} (0,1)$. To this end, we prove convergence of the moment generating functions. Let $\E_{\Q_{t,u(t)}}$ denote the expectation with respect to the probability measure $\Q_{t,u(t)}$. Using \eqref{eq:p1} we obtain, for $r\in\mr$,
\begin{align*}
&\hspace{-0.4cm}\E_{\Q_{t,u(t) }} {\rm e}^{r(W_\varsigma(t)/(a_\varsigma(t))^{1/2}-(u(t)  + 2 u(t) h(t)))}= \frac{\E {\rm e}^{(r+u(t) )  W_\varsigma(t)/(a_\varsigma(t))^{1/2}}}{\E {\rm e}^{u(t)   W_\varsigma(t)/(a_\varsigma(t))^{1/2}}}{\rm e}^{-r(u(t)  + 2u(t) h(t))}\\
&= \exp\big( (1/2)(r+u(t) )^2  + (r+u(t))^2 h(t) - (1/2) u^2(t)  -   u^2(t) h(t) -   r(u(t)  + 2u(t)h(t))
\big)\\	&=\exp\big((1/2+h(t))r^2\big)\to \exp((1/2)r^2)=\me \exp(r\, {\rm Normal} (0,1)),\quad t\to \infty.
\end{align*}
The weak convergence ensures that
\begin{multline*}
{\limsup}_{t\to \infty} \Q_{t,u(t)} \big\{ W_\varsigma(t)/(a_\varsigma(t))^{1/2}\leq (1-\delta)g(t)\big\}\\
\le \lim_{t\to \infty}\Q_{t,u(t)} \big\{W_\varsigma(t)/(a_\varsigma(t))^{1/2}\leq u(t) + 2u(t)h(t) \big\}
=\mmp\{{\rm Normal}\,(0,1)\leq 0\}= 1/2.
\end{multline*}
Since $\lim_{t\to \infty} (g(t) - (u(t) + 2u(t)h(t)))=+\infty$, we also have $$\lim_{t\to \infty} \Q_{t,u(t)} \big\{W_\varsigma(t)/(a_\varsigma(t))^{1/2}\leq g(t)\big\}=1.$$
Summarizing,
\begin{equation}\label{eq:p12}
\Q_{t,u(t)} (U^{(\varsigma)}_t)=\Q_{t,u(t)} \big\{W_\varsigma(t)/(a_\varsigma(t))^{1/2} \leq g(t)\big\}-\Q_{t,u(t)} \big\{W_\varsigma(t)/(a_\varsigma(t))^{1/2}\leq (1- \delta)g(t)\big\}\ge 1/3
\end{equation}
for large $t$. Now \eqref{eq:r3} follows from \eqref{eq:ss10}, \eqref{eq:p11} and \eqref{eq:p12}. The proof of Lemma \ref{lem:new} is complete.
\end{proof}

\begin{proof}[Proof of Proposition \ref{lilhalf2}]
Given sufficiently small $\delta>0$ pick $\gamma\in (0, (\sqrt{5}-1)/2)$ satisfying $(1+\gamma)(1-\delta^2/8)<1$. Then
\begin{equation*}
{\limsup}_{t\to\infty}\frac{X^\ast(t)}{(2a(t)h_0(a(t)))^{1/2}}\\\geq {\limsup}_{n\to \infty} \frac{X^\ast(\tau_n)}{(2a(\tau_n)h_0(a(\tau_n)))^{1/2}} \ge 1-\delta  \quad\text{{\rm a.s.}},
\end{equation*}
where the second inequality is secured by Lemma \ref{lem:new}. Now \eqref{lil11} follows upon letting $\delta$ tend to $0+$.
\end{proof}

\section{Proof of Theorem \ref{thm:Ginibre}}\label{sec:Ginibre}

In view of \eqref{eq:distr} it suffices to prove that $$\limsup_{t\to\infty}\frac{N(t)-t}{t^{1/4}(\log t)^{1/2}}=\frac{1}{\pi^{1/4}}\quad\text{{\rm a.s.}}$$ Recalling \eqref{eq:var11} and that $\me N(t)=t$ for $t>0$ this is equivalent to
\begin{equation}\label{eq:gin}
\limsup_{t\to\infty}\frac{N(t)-\me N(t)}{(2{\rm Var}\,N(t)\log {\rm Var}\,N(t))^{1/2}}=1\quad\text{{\rm a.s.}}
\end{equation}

Since $N(t)$ is the sum of independent indicators, we intend to apply Theorem \ref{thm:main}. While condition (A4) holds trivially, conditions (A1), (A2) and (A3), with $\mu=2$, are secured by \eqref{eq:var11}. Since the function $b$ ($b(t)=t$) is strictly increasing and continuous, condition (A5) holds by a sufficient condition given in Remark \ref{suff}. The first part of condition (B1) is secured by continuity of $t\mapsto {\rm Var}\,N(t)=\sum_{k\geq 1}\mmp\{\Gamma_k\leq t\}\mmp\{\Gamma_k>t\}$.

Checking condition (B2.1) requires some preparation. Recall that a random variable $Y_{\nu,\,t}$ has a discrete Bessel distribution with parameters $\nu>-1$ and $t>0$ if $$\P\{Y_{\nu,\,t}=n\} = \frac 1{I_\nu(t) n! \Gamma(n+\nu+1)} \bigg(\frac t2\bigg)^{2n+\nu}, \quad n\in\mn_0,$$ where $\Gamma$ is the Euler gamma function and $I_\nu$ is the modified Bessel function of the first kind given by
$$I_\nu(t) =  \sum_{n\geq 0} \frac 1{n! \Gamma(n+\nu+1)} \bigg(\frac t2\bigg)^{2n+\nu}, \quad \nu > -1, \ t>0.$$ We shall need a central limit theorem for $Y_{\nu,\,t}$, properly normalized and centered.
\begin{lemma}\label{lem:cltY}
The variables $2t^{-1/2}(Y_{\nu,\,t}-t/2)$ converge in distribution as $t\to\infty$ to a random variable with the standard normal distribution.   	
\end{lemma}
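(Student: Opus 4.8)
The plan is to compute the probability generating function of $Y_{\nu,\,t}$ in closed form, convert it into the moment generating function of the normalized variable, and then reduce the statement to the classical large‑argument asymptotics of $I_\nu$.

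First I would observe that, directly from the definition of $\mmp\{Y_{\nu,\,t}=n\}$ and of $I_\nu$, for every $z>0$
$$\me z^{Y_{\nu,\,t}}=\frac{1}{I_\nu(t)}\sum_{n\geq 0}\frac{z^n (t/2)^{2n+\nu}}{n!\,\Gamma(n+\nu+1)}=\frac{z^{-\nu/2}}{I_\nu(t)}\sum_{n\geq 0}\frac{(t\sqrt z/2)^{2n+\nu}}{n!\,\Gamma(n+\nu+1)}=\frac{z^{-\nu/2}\,I_\nu(t\sqrt z)}{I_\nu(t)},$$
where the middle equality uses $z^n(t/2)^{2n+\nu}=z^{-\nu/2}(t\sqrt z/2)^{2n+\nu}$ and the absolute convergence is automatic since $I_\nu$ is entire. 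Recalling that $Z_t:=2t^{-1/2}(Y_{\nu,\,t}-t/2)=2t^{-1/2}Y_{\nu,\,t}-t^{1/2}$ and taking $z=\eee^{2\lambda t^{-1/2}}$, this yields the exact identity
$$\me \eee^{\lambda Z_t}=\eee^{-\lambda t^{1/2}}\,\eee^{-\lambda\nu t^{-1/2}}\,\frac{I_\nu\big(t\,\eee^{\lambda t^{-1/2}}\big)}{I_\nu(t)},\qquad \lambda\in\mr .$$

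Next I would invoke the classical asymptotics $I_\nu(x)=\eee^x(2\pi x)^{-1/2}(1+O(x^{-1}))$ as $x\to\infty$ (it depends only on $\nu$), applicable here because $t\,\eee^{\lambda t^{-1/2}}\to\infty$. Taking logarithms gives
$$\log\frac{I_\nu\big(t\,\eee^{\lambda t^{-1/2}}\big)}{I_\nu(t)}=t\big(\eee^{\lambda t^{-1/2}}-1\big)-\tfrac12\lambda t^{-1/2}+O(t^{-1}),$$
and combining with the elementary expansion $t(\eee^{\lambda t^{-1/2}}-1)=\lambda t^{1/2}+\lambda^2/2+O(t^{-1/2})$ we obtain
$$\log\me \eee^{\lambda Z_t}=-\lambda t^{1/2}-\lambda\nu t^{-1/2}+\lambda t^{1/2}+\frac{\lambda^2}{2}-\frac{\lambda}{2}t^{-1/2}+O(t^{-1})=\frac{\lambda^2}{2}+O(t^{-1/2})\longrightarrow\frac{\lambda^2}{2}$$
as $t\to\infty$, for each fixed $\lambda\in\mr$. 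Since $\lambda\mapsto\eee^{\lambda^2/2}$ is the everywhere finite moment generating function of the standard normal distribution, pointwise convergence of $\me \eee^{\lambda Z_t}$ on a neighbourhood of the origin forces $Z_t$ to converge in distribution to ${\rm Normal}(0,1)$, by the continuity theorem for moment generating functions. This proves the lemma.

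There is no serious obstacle here: the only point requiring care is the bookkeeping of the error terms from the Bessel asymptotics and checking that the expansion of $I_\nu$ is applied at the diverging argument $t\,\eee^{\lambda t^{-1/2}}$, both of which are routine. A more probabilistic alternative — representing $Y_{\nu,\,t}$, for integer $\nu$, as one of two independent ${\rm Poisson}(t/2)$ variables conditioned on the value of their difference and then applying a local limit theorem — is possible but does not extend cleanly to non‑integer $\nu>-1$, so the generating‑function computation above is the more economical route.
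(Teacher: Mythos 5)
Your proposal is correct and follows essentially the same route as the paper: both compute the moment generating function of $Y_{\nu,\,t}$ exactly as $\eee^{-\nu p/2}I_\nu(\eee^{p/2}t)/I_\nu(t)$, apply the large-argument asymptotics $I_\nu(x)\sim \eee^x(2\pi x)^{-1/2}$, Taylor-expand the exponent, and conclude by convergence of moment generating functions. The only cosmetic difference is that you track the $\tfrac12\log$ correction from the $\sqrt{x}$ factor explicitly, whereas the paper absorbs it into an asymptotic equivalence.
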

\begin{rem}
The centering and the normalization used in Lemma \ref{lem:cltY} can be replaced with $\me Y_{\nu,\,t}$ and $(\var Y_{\nu,\,t})^{1/2}$, respectively. The equalities
$$\me Y_{\nu,\,t}=tR_\nu(t)/2\quad\text{and}\quad \var Y_{\nu,\,t}=t^2 R_\nu(t)(R_{\nu+1}(t)-R_\nu(t))/4+tR_\nu(t)/2,\quad \nu>-1,\ t>0,$$ with $R_\nu(t):=I_{\nu+1}(t)/I_\nu(t)$, can be checked directly or found in formula (2.1) of \cite{Lin+Kalbfleisch:2000}. Using these in combination with
\begin{equation}\label{eq:corr}
\lim_{t\to\infty}t(1-R_\nu(t))=\nu+1/2
\end{equation}
we infer $$\me Y_{\nu,\,t}= t/2-(\nu+1/2)/2+o(1)\quad\text{and}\quad \var Y_{\nu,\,t}~\sim~ t/4, \quad t\to\infty$$ thereby justifying the claim.

Relation \eqref{eq:corr} can be derived with some efforts from formula (9.13.7) on p.~167 in \cite{Spain+Smith:1970}, which provides a series representation of the modified Bessel function $I_\nu$. We note in passing that formula (A.3) in \cite{Lin+Kalbfleisch:2000} which states that $\lim_{t\to\infty}t(1-R_\nu(t))=2\nu+1$ seems to be incorrect.
\end{rem}
\begin{proof}
Observe that $Y_{\nu,\,t}$ has finite exponential moments of all orders: for $p\in\mr$,
\begin{equation}\label{eq:wr7}
\begin{split}
\E \eee^{pY_{\nu,\,t}} &= \sum_{n\geq 0} \frac{\eee^{pn}}{I_\nu(t)n! \Gamma(n+\nu+1)}\bigg(\frac t2\bigg)^{2n+\nu}\\
& =  \frac{e^{-\nu p/2}}{I_\nu(t)} \sum_{n\geq 0} \frac{1}{n! \Gamma(n+\nu+1)}\Big(\frac{\eee^{p/2} t}2\Big)^{2n+\nu}
= \frac{\eee^{-\nu p/2} I_\nu(\eee^{p/2}t)}{I_\nu(t)}<\infty.
\end{split}
\end{equation}
It is known (see, for instance, formula (9.13.7) on p.167 in \cite{Spain+Smith:1970}) that, for $\nu>-1$,
\begin{equation}\label{eq:equivI}
I_\nu(t)~\sim~ \frac{\eee^t}{\sqrt{2\pi t}}, \quad t\to\infty.
\end{equation}

To prove the lemma, it suffices to show convergence of moment generating functions. To this end, write, for $p\in\mr$,
\begin{multline*}
	\me \exp\Big(2p t^{-1/2} Y_{\nu,\,t}-p t^{1/2}\Big)=\exp(-p(t^{1/2}+\nu t^{-1/2}))\frac{I_\nu(\eee^{p t^{-1/2}}t)}{I_\nu(t)}~\sim~ \exp(-pt^{1/2}+\eee^{p t^{-1/2}}t-t)\\\sim\exp(-pt^{1/2}+(pt^{-1/2}+p^2t^{-1}/2)t)=\exp(p^2/2)=\me\exp(p\,{\rm Normal} (0,1)), \quad t\to\infty
\end{multline*}
having utilized \eqref{eq:equivI} for the first asymptotic equivalence and Taylor's expansion for the second.
\end{proof}

Now we state a lemma designed to check that relation \eqref{eq:onevar} (the second part of condition (B2.1)) holds.  	
\begin{lemma}\label{lem:B1}
Fix any $\varsigma\in(0,1)$. Then there exists the unique $x>0$ such that
$${\rm Var}\,\Big(\sum_{k =\lfloor t-xt^{1/2}\rfloor}^{\lfloor t+xt^{1/2}\rfloor}\1_{\{\Gamma_k\le t\}} \Big)~ \sim~(1-\varsigma) {\rm Var}\, N(t), \quad t\to \infty.$$
\end{lemma}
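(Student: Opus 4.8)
The plan is to use the independence of $\Gamma_1,\Gamma_2,\ldots$, which makes the indicators $\1_{\{\Gamma_k\le t\}}$ independent, so that the variance of the partial sum equals $\sum_k\mmp\{\Gamma_k\le t\}\mmp\{\Gamma_k>t\}$, the sum ranging over $k$ from $\lfloor t-xt^{1/2}\rfloor$ to $\lfloor t+xt^{1/2}\rfloor$. Since $\Gamma_k$ is a sum of $k$ independent unit-mean exponentials, $\me\Gamma_k=\var\Gamma_k=k$, and the Berry--Esseen theorem gives $\mmp\{\Gamma_k\le t\}=\Phi((t-k)/\sqrt{k})+O(k^{-1/2})$ with an absolute constant, where $\Phi$ denotes the standard normal distribution function. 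For $k$ in the block $[t-xt^{1/2},\,t+xt^{1/2}]$ one has $\sqrt{k}\sim\sqrt{t}$, so this error is $O(t^{-1/2})$ uniformly in $k$; moreover, writing $u_k:=(k-t)/t^{1/2}\in[-x,x]$ and using that $\Phi$ is Lipschitz together with the elementary bound $|t-k|\,|k^{-1/2}-t^{-1/2}|=O(x^2t^{-1/2})$, one may replace $\Phi((t-k)/\sqrt{k})$ by $\Phi(-u_k)=1-\Phi(u_k)$ at the cost of a further uniform $o(1)$ error. Hence each summand equals $\Phi(u_k)(1-\Phi(u_k))+o(1)$ uniformly over the block.

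The next step is to recognise the resulting sum as a Riemann sum. The block contains $\sim 2xt^{1/2}$ indices, and the uniform errors accumulate to $o(t^{1/2})$, so
\[
\var\Big(\sum_{k=\lfloor t-xt^{1/2}\rfloor}^{\lfloor t+xt^{1/2}\rfloor}\1_{\{\Gamma_k\le t\}}\Big)=t^{1/2}\Big(t^{-1/2}\sum_k\Phi(u_k)(1-\Phi(u_k))\Big)+o(t^{1/2}).
\]
The points $u_k$ have mesh $t^{-1/2}\to0$ and fill $[-x,x]$, and $u\mapsto\Phi(u)(1-\Phi(u))$ is continuous and bounded, so the bracketed sum converges to $F(x):=\int_{-x}^{x}\Phi(u)(1-\Phi(u))\,\dd u$. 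Combining this with \eqref{eq:var11} yields $\var\big(\sum_k\1_{\{\Gamma_k\le t\}}\big)/\var N(t)\to\pi^{1/2}F(x)$, so the claimed equivalence holds precisely when $\pi^{1/2}F(x)=1-\varsigma$.

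It then remains to show that this equation has a unique positive solution. By the symmetry $\Phi(-u)(1-\Phi(-u))=\Phi(u)(1-\Phi(u))$ we have $F(x)=2\int_0^x\Phi(u)(1-\Phi(u))\,\dd u$, so $F$ is continuous on $[0,\infty)$, $F(0)=0$ and $F'(x)=2\Phi(x)(1-\Phi(x))>0$, i.e.\ $F$ is strictly increasing. Finally, with $Z_1,Z_2$ independent standard normal random variables, $F(\infty)=\int_{\mr}\mmp\{Z_1\le u\}\mmp\{Z_2>u\}\,\dd u=\me\int_{\mr}\1_{\{Z_1\le u<Z_2\}}\,\dd u=\me(Z_2-Z_1)^+=\tfrac12\me|Z_1-Z_2|=\pi^{-1/2}$. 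Thus $F$ maps $(0,\infty)$ increasingly and bijectively onto $(0,\pi^{-1/2})$; since $1-\varsigma\in(0,1)$, the value $(1-\varsigma)\pi^{-1/2}$ lies in $(0,\pi^{-1/2})$, so $\pi^{1/2}F(x)=1-\varsigma$ has exactly one solution $x\in(0,\infty)$.

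The main obstacle is the uniformity of the Gaussian approximation across the whole block of $\sim t^{1/2}$ indices: a pointwise central limit theorem does not suffice, since the sum of $\sim t^{1/2}$ approximation errors must remain $o(t^{1/2})$, which genuinely requires a convergence rate — supplied here by Berry--Esseen. A secondary technical nuisance is the replacement of $\sqrt{k}$ by $\sqrt{t}$ in the argument of $\Phi$ and the rounding in the summation limits; both are harmless but should be handled explicitly.
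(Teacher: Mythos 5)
Your proof is correct, and it takes a genuinely different route from the paper. The paper works with the exact expression $\sum_k \P\{\Gamma_k\le t\}\P\{\Gamma_k>t\}$ written out via Poisson tail sums, decomposes it into several pieces, and evaluates each one using Stirling's formula, the asymptotics $I_\nu(t)\sim \eee^t/\sqrt{2\pi t}$ of the modified Bessel function, a central limit theorem for the discrete Bessel distribution (its Lemma on $Y_{\nu,\,t}$) and the random-walk CLT; this yields the limit of the variance ratio in the closed form $f(x)=\Phi(2^{1/2}x)-\Phi(-2^{1/2}x)-2^{1/2}\eee^{-x^2/2}\Phi(x)+2^{1/2}\eee^{-x^2/2}\Phi(-x)+2\pi^{1/2}x\Phi(-x)\Phi(x)$, whose strict monotonicity is then checked by differentiation. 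You instead exploit independence of the indicators together with a \emph{uniform} Gaussian approximation (Berry--Esseen, which is exactly the right tool, since a pointwise CLT would not control the $\sim t^{1/2}$ accumulated errors), and identify the block variance as a Riemann sum, obtaining the limit in integral form $\pi^{1/2}F(x)$ with $F(x)=\int_{-x}^x\Phi(u)(1-\Phi(u))\,\dd u$; monotonicity, $F(0)=0$ and $F(\infty)=\pi^{-1/2}$ then give existence and uniqueness. The two answers agree: $F'(x)=2\Phi(x)\Phi(-x)=\pi^{-1/2}f'(x)$ and both vanish at $0$, so $F=\pi^{-1/2}f$. Your argument is shorter and avoids the Bessel-function machinery entirely; the paper's computation buys the explicit closed form of the limiting function and exact constants for the partial-sum variances along the way, but for the purpose of this lemma your approach is fully adequate, and the technical points you flag (uniformity over the block, replacing $\sqrt{k}$ by $\sqrt{t}$, endpoint rounding) are precisely the ones that need care and are handled correctly.
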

\begin{proof}
Given $x>0$ put $c(t,x):=\lfloor t-xt^{1/2}\rfloor-1$ for large $t$ ensuring that $c(t,x)\geq 2$.

We first note that
\begin{equation*}
{\rm Var}\,\Big( \sum_{k=1}^{c(t,x)}\1_{\{\Gamma_k\le t\}}\Big)
= \sum_{k=1}^{c(t,x)} \P\{\Gamma_k\le t\}\P\{\Gamma_k > t\}.
\end{equation*}
Using $$\P\{\Gamma_k \le t\}= 1- \eee^{-t}\sum_{j=0}^{k-1}\frac{t^j}{j!},\quad t\geq 0,$$ we obtain
\begin{align*}
{\rm Var}\,\Big( \sum_{k=1}^{c(t,x)}\1_{\{\Gamma_k\le t\}} \Big)
&= \sum_{k=1}^{c(t,x)}  \sum_{j=0}^{k-1}\eee^{-t} \frac{t^j}{j!}\sum_{i\geq k} \eee^{-t}\frac{t^i}{i!}\\
&= \eee^{-2t} \sum_{j=0}^{c(t,x)-1} \sum_{k=j+1}^{c(t,x)} \sum_{i\geq k}\frac{t^{i+j}}{i!j!} \\
&= \eee^{-2t} \sum_{j=0}^{c(t,x)- 1 } \sum_{i\geq j+1} \sum_{k=j+1}^{\min(i,\,c(t,x))}  \frac{t^{i+j}}{i!j!} \\
&= \eee^{-2t} \sum_{j=0}^{c(t,x)- 1 } \sum_{i=j+1}^{c(t,x)} (i-j) \frac{t^{i+j}}{i!j!}
+ \eee^{-2t} \sum_{j=0}^{c(t,x)- 1 } \sum_{i\geq c(t,x)+1} (
c(t,x)-j) \frac{t^{i+j}}{i!j!}\\
& =:
A(t,x)+ B(t,x).
\end{align*}
	
We proceed by analyzing $A$:
\begin{multline*}
A(t,x)
= \eee^{-2t} \sum_{j=0}^{c(t,x)- 1 }\frac{t^j}{j!}\sum_{i=j+1}^{c(t,x)}\frac{t^{i}}{(i-1)!}
- \eee^{-2t}\sum_{j=1}^{c(t,x)-1}\frac{t^j}{(j-1)!}\sum_{i=j+1}^{c(t,x)}\frac{t^{i}}{i!}\\=  \eee^{-2t} \sum_{j=0}^{
c(t,x)- 1}\frac{t^j}{j!}\cdot t \cdot \sum_{i=j}^{
c(t,x)-1}\frac{t^{i}}{i!}- \eee^{-2t}\sum_{j=0}^{
c(t,x)-2} \frac{t^j}{j!} \cdot t \cdot  \sum_{i=j+2}^{
c(t,x)}  \frac{t^{i}}{i!}\\=\eee^{-2t} \frac{t^{
c(t,x)-1}}{(
c(t,x)-1)!} \cdot t \cdot  \frac{t^{
c(t,x)-1}}{(
c(t,x)-1)!}+
\eee^{-2t} \sum_{j=0}^{c(t,x)-2} \frac{t^{2j+1}}{(j!)^2}+\eee^{-2t} \sum_{j=0}^{c(t,x)-2} \frac{t^{2j+2}}{j!(j+1)!}\\- \eee^{-2t}\sum_{j=0}^{c(t,x)-2} \frac{t^{j+1}}{j!} \frac{t^{c(t,x)}}{(c(t,x))!}=: A_1(t,x)+A_2(t,x)+A_3(t,x)-A_4(t,x).
\end{multline*}
By Stirling's formula, the definition of $c$ and Taylor's expansion of $y\mapsto -\log (1-y)$,
\begin{multline}\label{eq:stirling}
\eee^{-t}\frac{t^{c(t,x)-1}}{(c(t,x)-1)!}~\sim~ \frac{\exp(-t+(t-x t^{1/2})\log t-(t-xt^{1/2})\log(t-x t^{1/2})+t-xt^{1/2})}{(2\pi t)^{1/2}}\\
=\frac{\exp(-xt^{1/2}-(t-x t^{1/2})\log(1-xt^{-1/2}))}{(2\pi t)^{1/2}}~\sim~ \frac{\eee^{-x^2/2}}{(2\pi t)^{1/2}},\quad t\to\infty.
\end{multline}
With this at hand, we infer $$A_1(t,x)=O(1)=o(\var N(t)),\quad t\to\infty.$$

As a preparation for what follows, we note that, by Lemma \ref{lem:cltY},
\begin{equation*}
\mmp\{Y_{0,\,2t}\le c(t,x)-2\}=\mmp\big\{ (2t^{-1})^{1/2}(Y_{0,\,2t}-t) \le - 2^{1/2}x+o(1)\big\}~\to~ \Phi(-2^{1/2}x),\quad t\to\infty,
\end{equation*}
where $\Phi$ is the distribution function of ${\rm Normal}(0,1)$. Using this in combination with \eqref{eq:equivI} we obtain
$$A_2(t,x) =\eee^{-2t} t I_0(2t) \,\mmp\{Y_{0,\,2t}\le
c(t,x)-2\}~\sim~ \frac{\Phi(-2^{1/2}x)}{2\pi^{1/2}}t^{1/2},\quad t\to\infty$$ and
$$A_3(t,x)=\eee^{-2t} t I_1(2t) \,\mmp\{Y_{1,\,2t}\le
c(t,x)-2\}~\sim~\frac{\Phi(-2^{1/2}x)}{2\pi^{1/2}}t^{1/2},\quad t\to\infty.$$
By the central limit theorem for standard random walks,
\begin{equation}\label{eq:gammad}
\mmp\{\Gamma_{
c(t,x)}>t\}=\mmp\Big\{\frac{\Gamma_{
c(t,x)}-
c(t,x)}{(
c(t,x))^{1/2}}>\frac{t-
c(t,x)}{(
c(t,x))^{1/2}}\Big\}~\to~ \mmp\{{\rm Normal}(0,1)>x\}=\Phi(-x),\quad t\to\infty.
\end{equation}
This together with \eqref{eq:stirling} yields
$$A_4(t,x)
=t\,\mmp\{\Gamma_{
c(t,x)-1}>t\}\eee^{-t}\frac{t^{
c(t,x)}}{(
c(t,x))!}~\sim~ \frac{\eee^{-x^2/2}\Phi(-x)}{(2\pi)^{1/2}}t^{1/2},\quad t\to\infty.$$

Left with analyzing $B$ we first obtain the following decomposition:
\begin{multline*}
B(t,x)=
 c(t,x)\eee^{-2t} \sum_{j=0}^{
 c(t,x)- 1} \frac{t^j}{j!}\sum_{i\geq
 c(t,x)+1} \frac{t^i}{i!} -t \eee^{-2t} \sum_{j=0}^{
 c(t,x)- 2} \frac{t^j}{j!} \sum_{i\geq
 c(t,x)+1} \frac{t^i}{i!}\\=
 c(t,x)\eee^{-t} \frac{t^{
 c(t,x)-1}}{(
 c(t,x)-1)!}\sum_{i\geq
 c(t,x)+1} \eee^{-t}\frac{t^i}{i!}-(t-
 c(t,x))\sum_{j=0}^{
 c(t,x)- 2}\eee^{-t}\frac{t^j}{j!} \sum_{i\geq
 c(t,x)+1}\eee^{-t}\frac{t^i}{i!}\\=
  c(t,x)\eee^{-t} \frac{t^{
  c(t,x)-1}}{(
  c(t,x)-1)!}\mmp\{\Gamma_{
  c(t,x)+1}\leq t\}-(t-
  c(t,x))\mmp\{\Gamma_{
  c(t,x)-1}>t\}\mmp\{\Gamma_{
  c(t,x)+1}\leq t\}\\=:B_1(t,x)
  -B_2(t,x).
\end{multline*}
By the same argument as used for the analysis of $A_4$
we infer $$B_1(t,x)
~\sim~\frac{\eee^{-x^2/2}\Phi(x)}{(2\pi)^{1/2}}t^{1/2},\quad t\to\infty.$$
Finally, by \eqref{eq:gammad}, $$B_2(t,x)
~\sim~ x\Phi(-x)\Phi(x)t^{1/2},\quad t\to\infty.$$

Combining all the fragments together we arrive at
$${\rm Var}\,\Big( \sum_{k=1}^{
c(t,x)}\1_{\{\Gamma_k\le t\}}\Big)~\sim~ t^{1/2} \Big(\frac{\Phi(-2^{1/2}x)}{\pi^{1/2}}-\frac{\eee^{-x^2/2}\Phi(-x)}{(2\pi)^{1/2}}+\frac{\eee^{-x^2/2}\Phi(x)}{(2\pi)^{1/2}}-x\Phi(-x)\Phi(x)\Big),\quad t\to\infty.$$
Arguing similarly we also conclude that, with $d(t,x)
:=\lfloor t+xt^{1/2}\rfloor$, as $t\to\infty$, $$
{\rm Var}\,\Big(\sum_{k=1}^{
d(t,x)}\1_{\{\Gamma_k\le t\}}\Big)~\sim~ t^{1/2}\Big(\frac{\Phi(2^{1/2}x)}{\pi^{1/2}}-\frac{\eee^{-x^2/2}\Phi(x)}{(2\pi)^{1/2}}+\frac{\eee^{-x^2/2}\Phi(-x)}{(2\pi)^{1/2}}+x\Phi(-x)\Phi(x)\Big)
$$ and thereupon
\begin{multline*}
{\rm Var}\,\Big( \sum_{k =c(t,x)+1}^{
d(t,x)}\1_{\{\Gamma_k\le t\}} \Big)\\
\sim~(t/\pi)^{1/2} \big(\Phi(2^{1/2}x)-\Phi(-2^{1/2}x)-2^{1/2}\eee^{-x^2/2}\Phi(x)+2^{1/2}\eee^{-x^2/2}\Phi(-x)+2\pi^{1/2} x\Phi(-x)\Phi(x)\big).
\end{multline*}

In view of \eqref{eq:var11} it remains to show that given $\varsigma\in(0,1)$ there exists the unique $x>0$ such that
$$f(x):=\Phi(2^{1/2}x)-\Phi(-2^{1/2}x)-2^{1/2}\eee^{-x^2/2}\Phi(x)+2^{1/2}\eee^{-x^2/2}\Phi(-x)+2\pi^{1/2}x\Phi(-x)\Phi(x)=1-\varsigma.$$
Since $f^\prime(x)=2\pi^{1/2}\Phi(-x)\Phi(x)$, the function $f$ is increasing on $[0,\infty)$. Also, it is continuous with $f(0)=0$ and $\lim_{x\to\infty}f(x)=1$. The stated properties of $f$ justify the claim.
\end{proof}

Fix any $\varsigma\in (0,1)$ and put $R_\varsigma(t,x)
:=\{k\in\mn: c(t,x)<k\leq d(t,x)\}$ with $c(t,x)=\lfloor t-xt^{1/2}\rfloor-1$ and $d(t,x)=\lfloor t+xt^{1/2}\rfloor$. Here, $x$ is the unique positive number, for which the asymptotic relation of Lemma \ref{lem:B1} holds. In particular, this verifies \eqref{eq:onevar}, which is one part of condition (B2.1).

Further,
$$c(\tau_{n+1}{, x})+1-d(\tau_n, x)\ge \tau_{n+1}-\tau_n-x(\tau_{n+1}^{1/2}+\tau_n^{1/2})-1=\big(\tau_{n+1}^{1/2}+\tau_n^{1/2}\big)\big(\tau_{n+1}^{1/2}-
\tau_n^{1/2}-x\big)-1.$$
By Proposition B.1 in \cite{Fenzl+Lambert:2021}, $\var N(t)=t\eee^{-2t}(I_0(2t)+I_1(2t))$ for $t>0$. Using formula (9.13.7) on p.~167 in \cite{Spain+Smith:1970} we infer $$\var N(t)=\Big(\frac{t}{\pi}\Big)^{1/2}-\frac{1}{16(\pi t)^{1/2}}+o(t^{-1/2}),\quad t\to\infty.$$ Recalling that $w_n(\gamma, 2)=n^{1+\gamma}$ we infer $\tau_n^{1/2}=(\tau_n(\gamma, 2))^{1/2}= \pi^{1/2} n^{1+\gamma}+O(1)$ as $n\to\infty$ and thereupon $$\tau_{n+1}^{1/2}-\tau_n^{1/2}=\pi^{1/2} (1+\gamma)n^{\gamma}+O(1),\quad n\to \infty.$$ As a consequence,
$$\lim_{n\to\infty} (c(\tau_{n+1}, x)+1-d(\tau_n{, x}))=+\infty,$$ whence $d(\tau_n,x)<c(\tau_{n+1}, x)+1$ for large enough $n$. This proves that there exists $n_0\in\mn$ such that the sets $R_\varsigma(\tau_{n_0},x)$, $R_\varsigma(\tau_{n_0+1},x),\ldots$ are disjoint, that is, the second part of condition (B2.1) holds true.

Now \eqref{eq:gin} is secured by Theorem \ref{thm:main}.

\section{Proofs related to Karlin's occupancy scheme}\label{sec:Karlin}

\subsection{Auxiliary results}
We shall need several preparatory results.
\begin{lemma}\label{lem:reg}
(a) Let a function $x$ be regularly varying at $\infty$ of negative index and $y$ any positive function satisfying $\lim_{t\to\infty}t^{-1}y(t)=\infty$. Then $$\lim_{t\to\infty}\frac{x(y(t))}{x(t)}=0.$$

\noindent (b) Let a function $x$ be regularly varying at $\infty$ of positive index and $y$ any positive function satisfying $\lim_{t\to\infty}t^{-1}y(t)=0$. Then $$\lim_{t\to\infty}\frac{x(y(t))}{x(t)}=0.$$
\end{lemma}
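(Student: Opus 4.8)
The plan is to derive both statements from Potter's bounds for regularly varying functions (see, e.g., Theorem~1.5.6 in \cite{Bingham+Goldie+Teugels:1989}): if an eventually positive function $x$ is regularly varying at $\infty$ of index $\varrho$, then for each $\delta>0$ there exist $C>1$ and $T>0$ with
\begin{equation*}
\frac{x(s)}{x(t)}\le C\max\big\{(s/t)^{\varrho+\delta},\ (s/t)^{\varrho-\delta}\big\},\qquad s,t\ge T.
\end{equation*}
For all the functions $x$ to which the lemma is eventually applied (counting functions and variances) $x$ is in addition locally bounded on $(0,\infty)$, so the threshold $T$ may be taken as small as desired.

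For part (a), I would write $\varrho=-\alpha$ with $\alpha>0$ and fix $\delta\in(0,\alpha)$. Since $t^{-1}y(t)\to\infty$, eventually $y(t)\ge t$, so that $s/t=y(t)/t\ge1$ and the maximum above equals $(y(t)/t)^{-\alpha+\delta}$, while also $y(t),t\ge T$. Hence $0\le x(y(t))/x(t)\le C(y(t)/t)^{-(\alpha-\delta)}$, and the right-hand side tends to $0$ because $\alpha-\delta>0$ and $y(t)/t\to\infty$.

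For part (b), I would write $\varrho=\alpha>0$ (so that $x(t)\to\infty$) and again fix $\delta\in(0,\alpha)$. Now $t^{-1}y(t)\to0$ forces $y(t)<t$ eventually, whence $s/t\le1$ and the maximum equals $(y(t)/t)^{\alpha-\delta}$; on the set of $t$ with $y(t)\ge T$ this gives $0\le x(y(t))/x(t)\le C(y(t)/t)^{\alpha-\delta}\to0$. On the complementary set $\{t:y(t)<T\}$ I would instead invoke local boundedness of $x$, namely $x(y(t))\le\sup_{(0,\,T]}x<\infty$ while $x(t)\to\infty$, so that the ratio tends to $0$ there as well; combining the two regimes yields $\lim_{t\to\infty}x(y(t))/x(t)=0$.

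The argument is essentially routine. The only delicate point is the treatment in part (b) of the values of $t$ for which $y(t)$ fails to be large — equivalently, the need to push the Potter threshold down toward $0$ — which is precisely why local boundedness of $x$ is used; in every concrete application within the paper the function $y$ satisfies $y(t)\to\infty$, so this subtlety does not arise there.
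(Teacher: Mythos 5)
Your proof is correct, but it takes a genuinely different route from the paper. You work directly from Potter's bounds (Theorem 1.5.6 in \cite{Bingham+Goldie+Teugels:1989}), which immediately gives the quantitative estimate $x(y(t))/x(t)\le C\,(y(t)/t)^{-(\alpha-\delta)}$ in (a), respectively $\le C\,(y(t)/t)^{\alpha-\delta}$ in (b), and the conclusion follows since $y(t)/t$ diverges (resp.\ vanishes). The paper instead invokes Theorem 1.5.3 in \cite{Bingham+Goldie+Teugels:1989} to replace $x$ by an asymptotically equivalent monotone function $x_1$: in (a), $y(t)\ge ct$ eventually for every fixed $c>0$, so monotonicity gives $\limsup_t x_1(y(t))/x_1(t)\le\limsup_t x_1(ct)/x_1(t)=c^\beta$, and one lets $c\to\infty$; part (b) is declared analogous. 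Your approach buys an explicit rate and avoids the detour through monotone equivalents, at the price of having to manage the Potter threshold $T$; the paper's approach avoids any threshold in (a) because monotonicity is global, but its omitted ``analogous'' argument for (b) has exactly the small-argument subtlety you address explicitly (the equivalence $x(y(t))\sim x_1(y(t))$ needs $y(t)\to\infty$, which is automatic in (a) but not in (b)). Your remark that in all of the paper's applications of part (b) one has $y(t)\to\infty$, so that either fix suffices, is accurate; with the local-boundedness caveat stated, your treatment of the regime $y(t)<T$ is a legitimate (indeed slightly more careful) completion of the argument.
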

\begin{proof}
(a) By Theorem 1.5.3 \cite{Bingham+Goldie+Teugels:1989}, there exists a nonincreasing function $x_1$ such that $x(t)\sim x_1(t)$ as $t\to\infty$. Also, for each $c>0$, $y(t)\ge ct$ for large enough $t$ and thereupon $$0\le \limsup_{t\to\infty}\frac{x(y(t))}{x(t)}=\limsup_{t\to\infty}\frac{x_1(y(t))}{x_1(t)}\le \limsup_{t\to\infty}\frac{x_1(ct)}{x_1(t)}=c^\beta,$$ where $\beta$ is the (negative) index of regular variation. Sending $c\to\infty$ finishes the proof.

The proof of part (b) is analogous, hence omitted.
\end{proof}

\begin{lemma}\label{eq:thm21}
(a) Conditions \eqref{eq:deHaan} and \eqref{eq:slowly} entail
\begin{equation}\label{eq:rho}
\rho(t)~\sim~(\beta+1)^{-1}(\log t)^{\beta+1}l(\log t),\quad t\to\infty.
\end{equation}

\noindent (b) Conditions \eqref{eq:deHaan} and \eqref{eq:slowly2} entail
\begin{equation}\label{eq:rho2}
\rho(t)~\sim~(\sigma \lambda)^{-1}\exp(\sigma (\log t)^\lambda)(\log t)^{1-\lambda},\quad t\to\infty.
\end{equation}
\end{lemma}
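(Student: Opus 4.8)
The statement to prove is Lemma \ref{eq:thm21}, which asserts two asymptotic relations for $\rho(t)$ derived from the de Haan condition \eqref{eq:deHaan} together with the growth hypotheses \eqref{eq:slowly} or \eqref{eq:slowly2} on the auxiliary function $\ell$.

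\textbf{Overall approach.} The plan is to exploit the representation theorem for de Haan's class $\Pi$: if $\rho\in\Pi_\ell$, then (see Theorem 3.6.6 or 3.7.3 in \cite{Bingham+Goldie+Teugels:1989}) one has $\rho(t)=c(t)+\int_1^t \ell(u)u^{-1}\,{\rm d}u$ where $c(t)$ converges to a finite limit and, crucially, $\ell$ can be taken slowly varying. Equivalently, after the substitution $u=\eee^v$, the integral becomes $\int_0^{\log t}\ell(\eee^v)\,{\rm d}v$, so that $\rho(t)\sim \int_0^{\log t}\tilde\ell(v)\,{\rm d}v$ with $\tilde\ell(v):=\ell(\eee^v)$. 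Thus the whole problem reduces to computing the asymptotics of an antiderivative of $\tilde\ell$, and both parts (a) and (b) follow by plugging in the explicit forms of $\ell$ given in \eqref{eq:slowly} and \eqref{eq:slowly2}.

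\textbf{Part (a).} Here $\ell(t)\sim(\log t)^\beta l(\log t)$, so $\tilde\ell(v)\sim v^\beta l(v)$, a regularly varying function of index $\beta>0$. By Karamata's theorem (Theorem 1.5.11 in \cite{Bingham+Goldie+Teugels:1989}), $\int_0^{x}v^\beta l(v)\,{\rm d}v\sim (\beta+1)^{-1}x^{\beta+1}l(x)$ as $x\to\infty$. Taking $x=\log t$ gives $\rho(t)\sim (\beta+1)^{-1}(\log t)^{\beta+1}l(\log t)$, which is \eqref{eq:rho}. One should note that the convergent term $c(t)$ is negligible since the integral diverges.

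\textbf{Part (b).} Here $\ell(t)\sim\exp(\sigma(\log t)^\lambda)$, so $\tilde\ell(v)\sim\exp(\sigma v^\lambda)$ with $\lambda\in(0,1)$. This is not regularly varying, so Karamata does not apply directly; instead I would estimate $\int_0^x\exp(\sigma v^\lambda)\,{\rm d}v$ by a Laplace-type / integration-by-parts argument. Writing $F(x):=\int_0^x\eee^{\sigma v^\lambda}\,{\rm d}v$ and integrating by parts (or simply differentiating the conjectured answer), one checks that the dominant contribution comes from $v$ near $x$: since $\frac{\rm d}{{\rm d}v}\exp(\sigma v^\lambda)=\sigma\lambda v^{\lambda-1}\exp(\sigma v^\lambda)$, we get $\exp(\sigma x^\lambda)=\sigma\lambda\int_0^x v^{\lambda-1}\eee^{\sigma v^\lambda}\,{\rm d}v + {\rm const}$, and comparing $v^{\lambda-1}$ with its value $x^{\lambda-1}$ near the upper endpoint (the standard Laplace heuristic, made rigorous by splitting the integral at, say, $x/2$ and bounding the lower piece) yields $F(x)\sim (\sigma\lambda)^{-1}x^{1-\lambda}\exp(\sigma x^\lambda)$. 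Substituting $x=\log t$ gives \eqref{eq:rho2}.

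\textbf{Main obstacle.} The routine part is part (a) (a direct appeal to Karamata). The slightly delicate point is part (b): one must justify that the ``tail'' of the integral dominates, i.e. that $\int_0^{x/2}\eee^{\sigma v^\lambda}\,{\rm d}v = o(x^{1-\lambda}\eee^{\sigma x^\lambda})$, which follows from the crude bound $\int_0^{x/2}\eee^{\sigma v^\lambda}\,{\rm d}v\le (x/2)\eee^{\sigma(x/2)^\lambda}$ together with $(x/2)^\lambda-x^\lambda\to-\infty$; and, more carefully, that on $[x/2,x]$ the factor $v^{\lambda-1}$ is within a factor $1+o(1)$ of $x^{\lambda-1}$ uniformly, so that the integration-by-parts identity pins down the constant $(\sigma\lambda)^{-1}$ exactly. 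This is the only place where a genuine (if short) estimate, rather than a citation, is needed; everything else is bookkeeping with the $\Pi$-representation and Karamata's theorem.
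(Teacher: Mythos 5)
Your reduction is essentially the paper's: both replace $\rho(t)$ by $\int_\eee^t y^{-1}\ell(y)\,\dd y$ (you via the $\Pi$-representation theorem, the paper via Theorem 3.7.3 of \cite{Bingham+Goldie+Teugels:1989} -- same content), substitute $y=\eee^v$ to get $\int_1^{\log t}\ell(\eee^v)\,\dd v$, and settle part (a) by Karamata. The only divergence is in part (b): the paper disposes of $\int_1^x\exp(\sigma v^\lambda)\,\dd v$ in one line by L'H\^{o}pital's rule, while you argue by integration by parts plus a Laplace-type localization. Your version as written has one quantitative misstep: on $[x/2,x]$ the weight $v^{\lambda-1}$ is \emph{not} within a factor $1+o(1)$ of $x^{\lambda-1}$ -- it varies by the fixed factor $2^{1-\lambda}>1$ -- so a split at $x/2$ only pins the constant up to that factor, not exactly. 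The fix is immediate: either split at $(1-\varepsilon)x$, note that there $v^{\lambda-1}\in[x^{\lambda-1},\,(1-\varepsilon)^{\lambda-1}x^{\lambda-1}]$ while $\int_{(1-\varepsilon)x}^{x}\sigma\lambda v^{\lambda-1}\eee^{\sigma v^\lambda}\dd v=\eee^{\sigma x^\lambda}(1+o(1))$, and then let $\varepsilon\to 0^+$ (the upper bound $F(x)\le x^{1-\lambda}(\sigma\lambda)^{-1}(\eee^{\sigma x^\lambda}-1)$ is already exact since $v^{1-\lambda}\le x^{1-\lambda}$); or simply use L'H\^{o}pital as the paper does. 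With that adjustment the proof is complete, and otherwise it matches the paper's argument step for step.
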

\begin{proof}
(a) By Theorem 3.7.3 in \cite{Bingham+Goldie+Teugels:1989}, $$\rho(t)~\sim~\int_\eee^t y^{-1}\ell(y){\rm d}y~\sim~\int_\eee^t y^{-1}(\log y)^\beta l(\log y){\rm d}y=\int_1^{\log t}y^\beta l(y){\rm d}y,\quad t\to\infty.$$ Relation \eqref{eq:rho} now follows by an application of Karamata's theorem (Proposition 1.5.8 in \cite{Bingham+Goldie+Teugels:1989}).

\noindent (b) By the same reasoning as above $$\rho(t)~\sim~\int_\eee^t y^{-1}\ell(y){\rm d}y~\sim~\int_\eee^t y^{-1}\exp(\sigma(\log y)^\lambda){\rm d}y=\int_1^{\log t} \exp(\sigma y^\lambda){\rm d}y, \quad t\to\infty.$$ An application of L'H\^{o}pital's rule yields \eqref{eq:rho2}.
\end{proof}

In the case $\alpha=1$ treated by Theorem \ref{thm:Karlin1}, that is, $\rho(t)\sim tL(t)$ as $t\to\infty$, a slowly varying function $L$ cannot be arbitrary. Indeed, according to Lemma 3 in \cite{Karlin:1967}
\begin{equation}\label{eq:raterho}
\lim_{t\to\infty}t^{-1}\rho(t)=0
\end{equation}
and $\int_1^\infty y^{-2}\rho(y){\rm d}y\leq 1$. As a consequence, $\lim_{t\to\infty} L(t)=0$ and, for each $c>0$, $\int_c^\infty y^{-1}L(y){\rm d}y<\infty$. The latter entails that the function $\hat L(t)=\int_t^\infty y^{-1}L(y){\rm d}y$ is well-defined for large $t$. Obviously $\lim_{t\to\infty}\hat L(t)=0$. It is known (see, for instance, Proposition 1.5.9b in \cite{Bingham+Goldie+Teugels:1989}) that $\hat L$ is slowly varying at $\infty$ and that
\begin{equation}\label{eq:rate}
\lim_{t\to\infty}\frac{L(t)}{\hat L(t)}=0.
\end{equation}
\begin{lemma}\label{lem:alone}
If $\rho(t)\sim tL(t)$ as $t\to\infty$ for some slowly varying $L$, then $$t^{-1}\int_0^\infty y^{-2}\eee^{-1/y}\rho(ty){\rm d}y~\sim~\int_t^\infty y^{-1}L(y){\rm d}y,\quad t\to\infty.$$
\end{lemma}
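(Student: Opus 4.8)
The plan is to reduce the claim to a statement about a single Laplace-type integral and then interchange a limit with an integral. Since $\rho(s)=0$ for $s\le 1$, the left-hand side equals $t^{-1}\int_{1/t}^\infty y^{-2}\eee^{-1/y}\rho(ty)\,\dd y$, and the substitution $u=ty$ turns it into $\int_1^\infty u^{-2}\eee^{-t/u}\rho(u)\,\dd u$; so it suffices to prove
\begin{equation*}
\int_1^\infty u^{-2}\eee^{-t/u}\rho(u)\,\dd u~\sim~\hat L(t),\quad t\to\infty.
\end{equation*}
I would record three facts. First, as recalled before the lemma, $\int_1^\infty y^{-2}\rho(y)\,\dd y\le 1$ and $\int_c^\infty y^{-1}L(y)\,\dd y<\infty$ for large $c$. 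Second, since $\rho(u)\sim uL(u)$ implies $u^{-2}\rho(u)\sim u^{-1}L(u)$ and both tails are integrable, the function $V(v):=\int_{\max(v,\,1)}^\infty u^{-2}\rho(u)\,\dd u$ is finite, nonincreasing, bounded by $V(1)\le 1$, and satisfies $V(v)\sim\hat L(v)$ as $v\to\infty$. Third, $\hat L$ being slowly varying, $t^{\varepsilon}\hat L(t)\to\infty$ for every $\varepsilon>0$, so any term of order $\eee^{-\delta t}$ is $o(\hat L(t))$.

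The core step is a Fubini interchange based on $\eee^{-t/u}=\int_0^\infty\eee^{-s}\1_{\{s>t/u\}}\,\dd s$. Integrating first in $u$ and using $\{s>t/u\}=\{u>t/s\}$ gives
\begin{equation*}
\int_1^\infty u^{-2}\eee^{-t/u}\rho(u)\,\dd u=\int_0^\infty\eee^{-s}V(t/s)\,\dd s=\int_0^t\eee^{-s}V(t/s)\,\dd s+V(1)\eee^{-t},
\end{equation*}
the last equality using $V(t/s)=V(1)$ for $s\ge t$. Dividing by $\hat L(t)$, the term $V(1)\eee^{-t}/\hat L(t)\to 0$, and for each fixed $s>0$ one has $V(t/s)/\hat L(t)=\big(V(t/s)/\hat L(t/s)\big)\big(\hat L(t/s)/\hat L(t)\big)\to 1$ as $t\to\infty$ by $V\sim\hat L$ and slow variation of $\hat L$. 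So the claim follows once one shows $\int_0^t\eee^{-s}\big(V(t/s)/\hat L(t)\big)\,\dd s\to\int_0^\infty\eee^{-s}\,\dd s=1$.

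I expect this last limit to be the only delicate point; I would split the integral at $s=t/X$ for suitable fixed $X$. Using Potter's bounds (Theorem 1.5.6 in \cite{Bingham+Goldie+Teugels:1989}) together with $V(v)\le 2\hat L(v)$ for large $v$, one can choose $X$ and a fixed $\delta\in(0,1)$ so that $V(t/s)/\hat L(t)\le 4\max(s^{-\delta},s^{\delta})$ for all $s\le t/X$ and large $t$; on $(0,t/X)$ the integrand is then dominated by the integrable function $s\mapsto 4\max(s^{-\delta},s^{\delta})\eee^{-s}$, and dominated convergence gives $\int_0^{t/X}\eee^{-s}\big(V(t/s)/\hat L(t)\big)\,\dd s\to 1$. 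On $s\in(t/X,t)$ one has $t/s\in(1,X)$, hence $V(t/s)\le V(1)\le 1$ and $\eee^{-s}\le\eee^{-t/X}$, so that part is at most $t\,\eee^{-t/X}/\hat L(t)\to 0$ since $\hat L(t)^{-1}=O(t^{\delta})$. Adding the two pieces yields the desired limit, hence the asymptotic equivalence and the lemma. The main obstacle is organizing this domination, which couples the regime $s\to 0$ (where $V$ is evaluated at large arguments, so Potter's inequality is needed) with the regime $s\asymp t$ (controlled via the growth bound on $\hat L(t)^{-1}$).
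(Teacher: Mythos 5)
Your proof is correct, but it follows a genuinely different route from the paper. The paper splits the original $y$-integral at $y=1$: the piece over $(0,1)$ is bounded by $t^{-1}\rho(t)\int_0^1 y^{-2}\eee^{-1/y}\,\dd y\sim {\rm const}\,L(t)$ using monotonicity of $\rho$, while the defect over $(1,\infty)$ is bounded via $1-\eee^{-1/y}\le 1/y$ and Karamata's theorem for tails (Proposition 1.5.10 in \cite{Bingham+Goldie+Teugels:1989}) by $t\int_t^\infty y^{-3}\rho(y)\,\dd y\sim L(t)$; both errors are then $o(\hat L(t))$ by \eqref{eq:rate}, and the main term $t^{-1}\int_1^\infty y^{-2}\rho(ty)\,\dd y=\int_t^\infty u^{-2}\rho(u)\,\dd u\sim\hat L(t)$ is the same tail-equivalence fact you state as your second preliminary (the paper leaves it implicit). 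You instead rewrite the Laplace-type integral through the layer-cake identity $\eee^{-t/u}=\int_0^\infty\eee^{-s}\1_{\{s>t/u\}}\,\dd s$ and Tonelli as $\int_0^\infty\eee^{-s}V(t/s)\,\dd s$, and then run a dominated-convergence argument with Potter's bounds, plus crude bounds on the ranges $s\in(t/X,t)$ and $s\ge t$. What the paper's argument buys is brevity and very elementary estimates, at the price of invoking Proposition 1.5.10 and the relation $L(t)=o(\hat L(t))$; your argument avoids both of those and is structurally transparent (the answer is literally an $\eee^{-s}\,\dd s$-average of $V(t/s)\sim\hat L(t)$), at the price of the Potter-bound bookkeeping needed to justify the interchange of limit and integral. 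Your use of the contextual facts $\rho\equiv 0$ on $(0,1]$ and $\int_1^\infty y^{-2}\rho(y)\,\dd y\le 1$ is legitimate, since these are recalled in the paragraph preceding the lemma, and all the individual steps (the substitution $u=ty$, the Fubini identity, the pointwise limit $V(t/s)/\hat L(t)\to 1$, the domination by $4\max(s^{-\delta},s^{\delta})\eee^{-s}$ with $\delta\in(0,1)$, and the superpolynomial smallness of $t\eee^{-t/X}/\hat L(t)$) check out.
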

\begin{proof}
It suffices to prove that, as $t\to\infty$, $$t^{-1}\int_0^1 y^{-2}\eee^{-1/y}\rho(ty){\rm d}y=o(\hat L(t))\quad\text{and}\quad t^{-1}\int_1^\infty y^{-2}(1-\eee^{-1/y})\rho(ty){\rm d}y=o(\hat L(t)).$$
The former follows from $$t^{-1}\int_0^1 y^{-2}\eee^{-1/y}\rho(ty){\rm d}y\leq t^{-1}\rho(t)\int_0^1 y^{-2}\eee^{-1/y}{\rm d}y~\sim~ \int_0^1 y^{-2}\eee^{-1/y}{\rm d}y\, L(t)=o(\hat L(t)).$$ Here, the first inequality is implied by monotonicity of $\rho$ and the last asymptotic relation is secured by \eqref{eq:rate}. The latter is justified as follows $$t^{-1}\int_1^\infty y^{-2}(1-\eee^{-1/y})\rho(ty){\rm d}y\leq t^{-1}\int_1^\infty y^{-3}\rho(ty){\rm d}y=t\int_t^\infty y^{-3}\rho(y){\rm d}y \sim L(t)=o(\hat L(t)).$$ The asymptotic equivalence is ensured by Proposition 1.5.10 in \cite{Bingham+Goldie+Teugels:1989} and formula \eqref{eq:rate} entails the last asymptotic relation.
\end{proof}

\begin{lemma}\label{lem:KarlB2}
For any function $\ell$ slowly varying at $\infty$ there exists a positive function $c$ satisfying $$\lim_{t\to\infty} \frac{\ell(c(t))}{\ell(t)}=1 \quad\text{and}\quad \lim_{t\to\infty} \frac{c(t)}{t}=\infty.$$
\end{lemma}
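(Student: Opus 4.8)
The plan is to construct $c$ by a staircase (diagonal) argument built on the uniform convergence theorem for slowly varying functions. Recall from Theorem 1.2.1 in \cite{Bingham+Goldie+Teugels:1989} that, for $\ell$ slowly varying at $\infty$,
$$\lim_{t\to\infty}\sup_{1\le\lambda\le N}\Big|\frac{\ell(\lambda t)}{\ell(t)}-1\Big|=0$$
for every fixed $N\in\mn$. The idea is to let $c(t)$ be an ever-larger \emph{integer} multiple of $t$, chosen so slowly that the uniform estimate above still applies on the relevant block.

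First I would use the displayed limit to pick, for each $n\in\mn$, a threshold $t_n\geq 1$ with the property that $|\ell(\lambda t)/\ell(t)-1|<1/n$ for all $t\geq t_n$ and all $\lambda\in[1,n]$; replacing $t_n$ by $\max(t_n,t_{n-1}+1,n)$ if necessary, I may and do assume that $(t_n)_{n\in\mn}$ is strictly increasing with $t_n\to\infty$. Next I would define $c(t):=nt$ for $t\in[t_n,t_{n+1})$, $n\in\mn$, and $c(t):=t$ for $0<t<t_1$ (the values on $[0,t_1)$ being irrelevant for the two asymptotic assertions). It then remains to verify the two limits. For $c(t)/t\to\infty$: if $t\in[t_n,t_{n+1})$ then $c(t)/t=n$, and $n\to\infty$ as $t\to\infty$. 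For $\ell(c(t))/\ell(t)\to1$: if $t\in[t_n,t_{n+1})$ then $t\geq t_n$ and the dilation factor $n$ lies in the admissible range $[1,n]$, so by the choice of $t_n$ we get $|\ell(c(t))/\ell(t)-1|=|\ell(nt)/\ell(t)-1|<1/n$, which tends to $0$ as $t\to\infty$ because then $n\to\infty$.

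There is essentially no obstacle here: the only point requiring care is the staircase bookkeeping, namely ensuring that whenever $t$ falls in the $n$-th block the inequality supplied by the uniform convergence theorem is genuinely applicable (it is, since $t\geq t_n$ and $n\in[1,n]$). As an alternative one could instead invoke the representation theorem (Theorem 1.3.1 in \cite{Bingham+Goldie+Teugels:1989}), writing $\ell(t)=c_0(t)\exp(\int_a^t \varepsilon(u)u^{-1}\,\dd u)$ with $c_0(t)\to c_0\in(0,\infty)$ and $\varepsilon(u)\to0$, and choosing $\log c(t)=\log t+g(\log t)$ with $g$ growing slowly enough (e.g. proportional to $(\sup_{r\geq s}|\varepsilon(e^r)|)^{-1/2}$) that $\int_t^{c(t)}\varepsilon(u)u^{-1}\,\dd u\to0$ while $c(t)/t\to\infty$; the staircase construction is however more elementary and avoids any measurability fuss, so I would present that one.
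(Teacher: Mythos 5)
Your staircase argument is correct: the uniform convergence theorem gives, for each $n$, a threshold $t_n$ beyond which $|\ell(\lambda t)/\ell(t)-1|<1/n$ uniformly for $\lambda\in[1,n]$, and setting $c(t)=nt$ on $[t_n,t_{n+1})$ then yields $c(t)/t=n\to\infty$ while $|\ell(c(t))/\ell(t)-1|<1/n\to 0$; the bookkeeping (that $t\geq t_n$ and the dilation factor $n$ lies in $[1,n]$) is exactly what is needed, and nothing is missing. However, this is a genuinely different route from the paper. The paper's proof is the alternative you sketch at the end: it invokes the representation theorem, writes $\ell(t)=A(t)\exp\big(\int_a^t u^{-1}\varepsilon(u)\,\dd u\big)$ with $A(t)\to A\in(0,\infty)$ and $\varepsilon(u)\to 0$, reduces the first limit to $\int_t^{c(t)}u^{-1}\varepsilon(u)\,\dd u\to 0$, bounds this integral by $\sup_{u\ge t}|\varepsilon(u)|\,\log(t^{-1}c(t))$, and takes the explicit choice $c(t)=t\exp\big((\sup_{u\ge t}|\varepsilon(u)|)^{-1/2}\big)$. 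What each buys: the paper's version produces a single closed-form $c$ (given the representation) and is very short, at the cost of invoking the representation theorem rather than the (logically prior) uniform convergence theorem; your version is more elementary in that it uses only the UCT and avoids the representation entirely, at the cost of a piecewise construction and the choice of the sequence $(t_n)$, which is not canonical. Both are complete proofs of the lemma.
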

\begin{proof}
According to the representation theorem for slowly varying function (Theorem 1.3.1 in \cite{Bingham+Goldie+Teugels:1989}),
$$\ell(t)=A(t)\exp\Big(\int_a^t u^{-1} \varepsilon(u)\dd u\Big),$$ where $\lim_{t\to\infty} A(t)=A\in(0,\infty)$, $a>0$ and $\lim_{t\to\infty} \varepsilon(t)=0$. Therefore,
the first limit relation in the statement of the lemma is equivalent to
$$\lim_{t\to\infty} \int_t^{c(t)}u^{-1} \varepsilon(u)\dd u=0.$$
In view of $$\Big|\int_t^{c(t)}u^{-1} \varepsilon(u)\dd u\Big|\le \int_t^{c(t)}u^{-1} |\varepsilon(u)| \dd u\le \sup_{u\ge t}|\varepsilon(u)|\log (t^{-1}c(t)),$$ one may take $c(t)=t\exp\big((\sup_{u\ge t}|\varepsilon(u)|)^{-1/2}\big)$. Since $\lim_{t\to\infty} \sup_{u\ge t}|\varepsilon(u)|=0$, both claimed limit relations hold true.
\end{proof}

Recall that we denoted by $\Pi_{\ell,\,\infty}$ the subclass of the de Haan class $\Pi$ with the auxiliary function $\ell$ satisfying $\lim_{t\to\infty}\ell(t)=\infty$. Also, we recall that $K_j(t)$ and $K_j^\ast(t)$ denote the number of boxes at time $t$ in the Poissonized Karlin scheme containing at least $j$ balls and exactly $j$ balls, respectively. For $k\in\mn$ and $t\geq 0$, denote by $\pi_k(t)$ the number of balls in box $k$ at time $t$. By the thinning property of Poisson processes, the processes $(\pi_1(t))_{t\geq 0}$, $(\pi_2(t))_{t\geq 0},\ldots$ are independent and, for $k\in\mn$, $(\pi_k(t))_{t\geq 0}$ is a Poisson process with intensity $p_k$. Now we are ready to exhibit representations of $K_j(t)$ and $K_j^\ast(t)$ as the infinite sums of independent indicators
\begin{equation}\label{eq:sumindic}
K_j(t)=\sum_{k\geq 1}\1_{\{\pi_k(t)\geq j\}}\quad\text{and}\quad K^\ast_j(t)=\sum_{k\geq 1}\1_{\{\pi_k(t)=j\}} ,\quad j\geq 1,~t\geq 0.
\end{equation}

\subsection{Asymptotic behavior of $\me K_j(t)$ and ${\rm Var}\,K_j(t)$}

Passing to the proofs of Theorems \ref{thm:Karlin0}, \ref{thm:Karlin} and \ref{thm:Karlin1} we first show that the asymptotics of $\me K_j(t)$ and ${\rm Var}\,K_j(t)$ are as stated in these theorems. In addition, we find the asymptotics of $\me K_j^\ast(t)$.
\begin{lemma}\label{meanvar}
Assume that $\rho\in \Pi_{\ell,\,\infty}$. Then, for each $j\in\mn$,
\begin{equation}\label{eq:mean0}
\me K_j(t)~\sim~ \rho(t),\quad t\to\infty,
\end{equation}
\begin{equation}\label{eq:var0}
{\rm Var}\,K_j(t)~\sim~ \Big(\log 2- \sum_{k=1}^{j-1}\frac{(2k-1)!}{(k!)^2 2^{2k}}\Big)\ell(t),\quad t\to\infty
\end{equation}
and
\begin{equation}\label{eq:meK*0}
\me K^\ast_j(t)~\sim~ \frac{\ell(t)}{j},\quad t\to\infty.
\end{equation}

Assume that $\rho(t)\sim t^\alpha L(t)$ as $t\to\infty$ for some $\alpha\in(0,1]$ and some $L$ slowly varying at $\infty$. If $\alpha\in(0,1)$ and $j\in\mn$ or $\alpha=1$ and $j\geq 2$, then, as $t\to\infty$,
\begin{equation}\label{eq:momincreas}
\me K_j(t)~\sim~\frac{\Gamma(j-\alpha)}{(j-1)!}\rho(t),
\end{equation}
\begin{equation}\label{eq:var}
\lim_{t\to\infty}\frac{{\rm Var}\, K_j(t)}{\rho(t)}=\Big(\sum_{i=0}^{j-1}\frac{\Gamma(i+j-\alpha)}{i!(j-1)!2^{i+j-1-\alpha}}-\frac{\Gamma(j-\alpha)}{(j-1)!}\Big)>0
\end{equation}
and
\begin{equation}\label{eq:momtons}
\me K_j^\ast(t)~\sim~\frac{\alpha \Gamma(j-\alpha)}{j!}\rho(t),\quad t\to\infty.
\end{equation}

If $\alpha=1$, then
\begin{equation}\label{eq:varalone}
{\rm Var}\, K_1(t)~\sim~ \me K_1(t)~\sim~\me K^*_1(t)~\sim~ t\hat L(t),\quad t\to\infty.
\end{equation}
\end{lemma}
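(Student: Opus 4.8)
The plan is to write each of the three quantities as a Lebesgue--Stieltjes integral against the counting measure $\rho(\dd y)$ of the sequence $(1/p_k)_{k\in\mn}$ and then apply a suitable Abelian theorem. Since, by the thinning property, $\pi_k(t)$ is Poisson distributed with mean $p_kt$, relation \eqref{eq:sumindic} and the identity $\var K_j(t)=\sum_{k\ge 1}\mmp\{\pi_k(t)\ge j\}\,\mmp\{\pi_k(t)<j\}$ give
\begin{equation*}
\me K_j(t)=\int_{(0,\infty)}\!\!\phi_j(t/y)\,\rho(\dd y),\qquad \me K_j^\ast(t)=\int_{(0,\infty)}\!\!\psi_j(t/y)\,\rho(\dd y),\qquad \var K_j(t)=\int_{(0,\infty)}\!\!\phi_j(t/y)\bigl(1-\phi_j(t/y)\bigr)\,\rho(\dd y),
\end{equation*}
where $\phi_j(x):=\mmp\{{\rm Poisson}(x)\ge j\}=\sum_{i\ge j}\psi_i(x)$ and $\psi_j(x):=\mmp\{{\rm Poisson}(x)=j\}=\eee^{-x}x^j/j!$, so that $\phi_j'=\psi_{j-1}$ and $\psi_j'=\psi_{j-1}-\psi_j$. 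Integrating by parts — the boundary terms vanish since $\rho$ vanishes near $0$, $\phi_j(t/y),\psi_j(t/y)=O(t/y)$ as $y\to\infty$ and $\rho(y)=o(y)$ in all cases (this uses \eqref{eq:raterho} when $\alpha=1$) — and substituting $y=tu$ reduces each integral to the form $\int_0^\infty\rho(tu)\,w(u)\,\dd u$ with an explicit integrable weight $w$ assembled from $\psi_{j-1}(1/u)u^{-2}$ and $\phi_j(1/u)$; for instance $\me K_j(t)=\int_0^\infty\rho(tu)\psi_{j-1}(1/u)u^{-2}\,\dd u$ and $\var K_j(t)=\int_0^\infty\rho(tu)\psi_{j-1}(1/u)\bigl(1-2\phi_j(1/u)\bigr)u^{-2}\,\dd u$.

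When $\rho(t)\sim t^\alpha L(t)$ with $\alpha\in(0,1)$, or $\alpha=1$ and $j\ge 2$, I would push $\rho(tu)\sim\rho(t)u^\alpha$ through the integral via the uniform convergence theorem for regularly varying functions, dominating near $0$ by the super-polynomial decay of $\psi_{j-1}(1/u)$ and near $\infty$ by Potter's bound $\rho(tu)\le C(tu)^{\alpha+\varepsilon}$ with $\alpha+\varepsilon<j$. This gives $\me K_j(t)\sim\rho(t)\int_0^\infty u^{\alpha-2}\psi_{j-1}(1/u)\,\dd u$; the substitution $v=1/u$ turns the constant into $\int_0^\infty v^{-\alpha}\psi_{j-1}(v)\,\dd v=\Gamma(j-\alpha)/(j-1)!$, and the same computation for $\me K_j^\ast$ and $\var K_j$ — using $\phi_j=\sum_{i\ge j}\psi_i$ for the latter together with $\int_0^\infty v^{j-1+i-\alpha}\eee^{-2v}\,\dd v=\Gamma(i+j-\alpha)/2^{i+j-\alpha}$ — produces the constants in \eqref{eq:momtons} and \eqref{eq:var}.

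For $\rho\in\Pi_{\ell,\infty}$ I would instead use the uniform de Haan convergence $\rho(tu)=\rho(t)+\ell(t)(\log u+o(1))$ on compact $u$-sets, again with an endpoint domination argument. For $\me K_j$ the constant term survives, since $\int_0^\infty\psi_{j-1}(1/u)u^{-2}\,\dd u=\int_0^\infty\psi_{j-1}(v)\,\dd v=1$, so $\me K_j(t)=\rho(t)+O(\ell(t))\sim\rho(t)$ because $\ell(t)=o(\rho(t))$. The crucial point for $\me K_j^\ast$ and $\var K_j$ is that the $\rho(t)$-term is annihilated: $\int_0^\infty(\psi_{j-1}(v)-\psi_j(v))\,\dd v=0$ and $\int_0^\infty\psi_{j-1}(v)\bigl(1-2\phi_j(v)\bigr)\,\dd v=1-2\int_0^\infty\phi_j'(v)\phi_j(v)\,\dd v=1-[\phi_j^2(v)]_0^\infty=0$, so the leading order becomes $\ell(t)\int_0^\infty(-\log v)(\cdots)\,\dd v$; evaluating these logarithmic moments by means of $\int_0^\infty\log v\cdot\eee^{-2v}v^n/n!\,\dd v=2^{-(n+1)}(\psi(n+1)-\log 2)$ (with $\psi$ the digamma function) and $\int_0^\infty\log v\cdot\psi_{j-1}(v)\,\dd v=\psi(j)$, and collecting terms, yields $\ell(t)/j$ in \eqref{eq:meK*0} and the constant $\log 2-\sum_{k=1}^{j-1}(2k-1)!/((k!)^2 2^{2k})$ in \eqref{eq:var0}. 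The last remaining case $\alpha=1$, $j=1$ is handled directly: $\me K_1(t)=t\int_0^\infty y^{-2}\eee^{-t/y}\rho(y)\,\dd y\sim t\hat L(t)$ by Lemma \ref{lem:alone}, its rescaling at $2t$ gives $t\int_0^\infty y^{-2}\eee^{-2t/y}\rho(y)\,\dd y=\tfrac12\me K_1(2t)\sim t\hat L(t)$, whence $\var K_1(t)=t\int_0^\infty y^{-2}(2\eee^{-2t/y}-\eee^{-t/y})\rho(y)\,\dd y\sim t\hat L(t)$, and a similar argument using $\int_0^\infty L(t/w)\eee^{-w}\,\dd w\sim L(t)=o(\hat L(t))$ gives $\me K_1^\ast(t)\sim t\hat L(t)$.

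The main obstacle is the uniform justification of the interchange of limit and integral — controlling $\rho(tu)$ simultaneously for small and for large $u$ by the Potter and representation bounds, and making the convergence $\rho(tu)/\rho(t)\to u^\alpha$ (respectively $(\rho(tu)-\rho(t))/\ell(t)\to\log u$) locally uniform. A secondary point, specific to the $\Pi_\ell$ regime, is recognising the algebraic identity that kills the $\rho(t)$-contribution to $\var K_j$ and $\me K_j^\ast$, which is what pins their leading order at $\ell(t)$ rather than at $\rho(t)$; the ensuing evaluation of the Gamma and $\log$-Gamma integrals is routine but bookkeeping-heavy.
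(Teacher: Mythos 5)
Your proposal is correct in substance, but it follows a genuinely different route from the paper. The paper proves most of the lemma by citation and reduction: \eqref{eq:mean0}, \eqref{eq:meK*0}, \eqref{eq:var0} and \eqref{eq:momtons} are taken from Karlin's Theorem 1 and formulas (23), (26) and from Proposition 2.5 and Corollary 2.8 of Iksanov--Kotelnikova; the only direct Abelian computation is for $\me K_j(t)$ in the case $\alpha=1$, $j\ge 2$ (Potter's bound plus dominated convergence applied to \eqref{eq:meK}); the variance asymptotics \eqref{eq:var} is then obtained not by a limit theorem for the integral but by the algebraic identity $\var K_j(t)=\sum_{i=0}^{j-1}\binom{i+j-1}{i}2^{-(i+j-1)}\me K_{i+j}(2t)-\me K_j(t)$, which reduces it to the already established mean asymptotics; the case $\alpha=1$, $j=1$ is settled through Karlin plus Lemma \ref{lem:alone}, essentially as you do. Your unified representation $\int_0^\infty\rho(tu)w(u)\,\dd u$, treated by a first-order Abelian argument in the regularly varying regime and a second-order (de Haan) expansion in the $\Pi_\ell$ regime, is self-contained: in particular it reproves, rather than quotes, \eqref{eq:var0} and \eqref{eq:meK*0}, and your observation that the $\rho(t)$-coefficient of $\var K_j$ and $\me K_j^\ast$ vanishes (since $\int_0^\infty\psi_{j-1}(v)(1-2\phi_j(v))\,\dd v=0$ and $\int_0^\infty(\psi_{j-1}-\psi_j)=0$) is exactly the mechanism that pins the order at $\ell(t)$. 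I checked that your digamma bookkeeping does land on $\ell(t)/j$ and, using $\sum_{i=0}^{j-1}\binom{i+j-1}{i}2^{-(i+j-1)}=1$, on $\log 2-\sum_{k=1}^{j-1}(2k-1)!/((k!)^2 2^{2k})$ for small $j$; what the paper's route buys is brevity, what yours buys is independence from the external results and one mechanism covering all regimes.

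Two items still need to be supplied. First, \eqref{eq:var} asserts strict positivity of the limit, which you never address and which the paper proves separately (via $\var K_j(t)\ge 2^{-j}\me K_j^\ast(2t)$ for $\alpha\in(0,1)$ and a binomial identity for $\alpha=1$); in your framework it is immediate once you undo the integration by parts in the limiting constant, since $\int_0^\infty v^{-\alpha}\psi_{j-1}(v)\bigl(1-2\phi_j(v)\bigr)\,\dd v=\alpha\int_0^\infty v^{-\alpha-1}\phi_j(v)\bigl(1-\phi_j(v)\bigr)\,\dd v>0$ (the boundary terms vanish because $j>\alpha$), but this should be said. Second, the two places you flag as remaining work are real: the closed-form evaluation of the digamma sums for general $j$ is a nontrivial identity (it is equivalent to the one the paper imports as Lemma 2.3 of Iksanov--Kotelnikova) and must be proved, e.g.\ by induction on $j$; and the interchange of limit and integral in the $\Pi_\ell$ regime needs the global Potter-type bound $|\rho(tu)-\rho(t)|\le C\ell(t)\max(u^{\delta},u^{-\delta})$ for $t,tu$ large (Section 3.8 of Bingham--Goldie--Teugels), together with the trivial bound $\rho(tu)\le\rho(T_0)$ on $\{tu\le T_0\}$, to dominate both endpoints. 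Neither point invalidates the plan; both are of the standard kind you anticipate.
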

\begin{proof}
Assume that $\rho\in\Pi_{\ell,\,\infty}$. By Theorem 1 in \cite{Karlin:1967}, $\me K_1(t)\sim \rho(t)$ as $t\to\infty$. By Proposition 2.5 in \cite{Iksanov+Kotelnikova:2022}, \eqref{eq:meK*0} holds. Relation \eqref{eq:deHaan} entails $\lim_{t\to\infty}(\rho(t)/\ell(t))=\infty$. Hence, for $j\geq 2$, $\me K_j(t)=\me K_1(t)-\sum_{i=1}^{j-1}\me K_i^\ast(t)\sim \rho(t)$ as $t\to\infty$, that is, \eqref{eq:mean0} holds true. Formula \eqref{eq:var0} can be found in Corollary 2.8 of \cite{Iksanov+Kotelnikova:2022}.

By formula (23) in \cite{Karlin:1967}, \eqref{eq:momtons} holds whenever $\alpha\in(0,1)$ and $j\in\mn$ or $\alpha=1$ and $j\geq 2$. Assume that $\alpha\in (0,1)$. By Theorem 1 in \cite{Karlin:1967}, $\me K_1(t)\sim \Gamma(1-\alpha)t^\alpha L(t)$ as $t\to\infty$. Noting that $\me K_{j+1}(t)=\me K_j(t)-\me K_j^\ast(t)$ for $j\in\mn$ and $t\geq 0$, relation \eqref{eq:momincreas} follows with the help of mathematical induction.

Assume that $\alpha=1$ and $j\geq 2$. The previous argument does not apply and we have to resort to direct calculations. Write with the help of the first equality in \eqref{eq:sumindic}, for $t\ge 0$,
$$\me K_j(t)=\me \sum_{k\ge 1} \1_{\{\pi_k(t)\ge j\}}=\sum_{k\ge 1} \Big(1-\eee^{-p_kt}\sum_{i=0}^{j-1}\frac{(p_kt)^i}{i!}\Big)=\int_{(1,\,\infty)} \Big(1-\eee^{-t/x}\sum_{i=0}^{j-1}\frac{(t/x)^i}{i!}\Big){\rm d}\rho(x).$$ Using the inequality
\begin{equation}\label{eq:addineq}
 1-\eee^{-x}\sum_{i=0}^{j-1}\frac{x^i}{i!}\le \frac{x^j}{j!},\quad x\geq 0
\end{equation}
and \eqref{eq:raterho} we infer
\begin{equation}\label{eq:addineq1}
\limsup_{x\to\infty} \Big(1-\eee^{-t/x}\sum_{i=0}^{j-1}\frac{(t/x)^i}{i!}\Big) \rho(x) \le \lim_{x\to\infty} \frac{(t/x)^j}{j!} \rho(x) = 0.
\end{equation}
Hence, integrating by parts and changing the variable $t/x=y$ we obtain
\begin{equation}\label{eq:meK}
\me K_j(t) =\int_0^t \eee^{-y}\frac{y^{j-1}}{(j-1)!} \rho(t/y){\rm d} y,\quad t\geq 0.
\end{equation}
According to Potter's bound (Theorem 1.5.6 in \cite{Bingham+Goldie+Teugels:1989}), given $\delta\in (0,1)$ there exists $x_0>1$ such that, for $y\le t/x_0$ and $t\geq x_0$,
$$\frac{\rho(t/y)}{\rho(t)}\le 2\max(y^{-1-\delta},\, y^{-1+\delta}).$$ With this at hand, write, for $t\geq x_0$,
$$\frac{\me K_j(t)}{\rho(t)}=\int_0^t \eee^{-y}\frac{y^{j-1}}{(j-1)!} \frac{\rho(t/y)}{\rho(t)}{\rm d}y=\int_0^{t/x_0}\ldots +\int_{t/x_0}^t\ldots=:A(t)+B(t).$$ By monotonicity of $\rho$,
$$B(t)\le \frac{\rho(x_0)}{\rho(t)}\int_0^\infty \eee^{-y}\frac{y^{j-1}}{(j-1)!}{\rm d}y=\frac{\rho(x_0)}{\rho(t)}~\to~0,\quad t\to\infty.$$ Observe that $$\1_{[0,\,t/x_0]}(y)\eee^{-y} y^{j-1}\frac{\rho(t/y)}{\rho(t)}\leq \eee^{-y} y^{j-1} \max(y^{-1-\delta},\,y^{-1+\delta}),\quad y>0$$ and that the function on the right-hand side is integrable on $(0,\infty)$. Hence, by Lebesgue's dominated convergence theorem,
$$\lim_{t\to\infty} A(t)=\int_0^\infty \lim_{t\to\infty}\1_{[0,\,t/x_0]}(y)\eee^{-y}\frac{y^{j-1}}{(j-1)!} \frac{\rho(t/y)}{\rho(t)}{\rm d}y=\int_0^\infty \eee^{-y}\frac{y^{j-2}}{(j-1)!}{\rm d}y=\frac{1}{j-1}.$$ This proves \eqref{eq:momincreas} in the case $\alpha=1$ and $j\geq 2$.

Assume that $\alpha=1$. By Theorem 1 and formulae (23) and (26) in \cite{Karlin:1967}, $${\rm Var}\, K_1(t)~\sim~ \me K_1(t)~\sim~\me K_1^*(t)~\sim~ \int_0^\infty y^{-2}\eee^{-1/y}\rho(ty){\rm d}y,\quad t\to\infty.$$ This in combination with Lemma \ref{lem:alone} entails \eqref{eq:varalone}.

Assume that $\alpha\in (0,1)$ and $j\in\mn$ or $\alpha=1$ and $j\geq 2$. Arguing along the lines of the first part of the proof one can show that
\begin{multline}\label{eq:formvar}
\var K_j(t)={\rm Var}\,\sum_{k\ge 1}\1_{\{\pi_k(t)\geq j\}}=\sum_{k\ge 1} \eee^{-p_kt}\sum_{i=0}^{j-1}\frac{(p_kt)^i}{i!}\Big(1-\eee^{-p_kt}\sum_{i=0}^{j-1}\frac{(p_kt)^i}{i!}\Big)\\=2\int_0^t  \eee^{-2y}\sum_{i=0}^{j-1}\frac{y^{i+j-1}}{i!(j-1)!}\rho(t/y){\rm d}y-\int_0^t \eee^{-y}\frac{y^{j-1}}{(j-1)!}\rho(t/y){\rm d}y.
\end{multline}
Changing in the first integral the variable $z=2y$ and invoking equality \eqref{eq:meK} we obtain $$\var K_j(t)=\sum_{i=0}^{j-1}\me K_{i+j}(2t)\frac{\binom{i+j-1}{i}}{2^{i+j-1}}-\me K_j(t).$$
This in combination with the already proved relation \eqref{eq:momincreas} and the fact that $\rho$ is regularly varying of index $\alpha$ yields
\begin{multline}
\lim_{t\to\infty} \frac{\var K_j(t)}{\rho(t)}=\lim_{t\to\infty} \Big(\frac{\sum_{i=0}^{j-1}\me K_{i+j}(2t)\frac{\binom{i+j-1}{i}}{2^{i+j-1}}}{\rho(2t)}\frac{\rho(2t)}{\rho(t)}-\frac{\me K_j(t)}{\rho(t)}\Big)\\
=\sum_{i=0}^{j-1}\frac{\Gamma(i+j-\alpha)}{i!(j-1)!2^{i+j-1-\alpha}}-\frac{\Gamma(j-\alpha)}{(j-1)!}:=c_j.\label{eq:inter1}
\end{multline}

Plainly, the constants $c_j$ are nonnegative. Now we intend to show that $c_j>0$ for each $j\in\mn$ if $\alpha\in (0,1)$ and for each $j\geq 2$ if $\alpha=1$. In the latter case the constants $c_j$ take a simpler form $$c_j=\frac{(2j-3)!}{((j-1)!)^22^{2j-3}},\quad j\geq 2,$$ which immediately proves positivity. Indeed, according to Lemma 2.3 in \cite{Iksanov+Kotelnikova:2022}, with $a=b=1$ and $l=j-1$,
$$\sum_{i=0}^{j-2} \frac{\binom{i+j-2}{j-2}}{2^{i+j-2}}=1,\quad j\geq 2,$$ whence $$\sum_{i=0}^{j-1}\frac{\Gamma(i+j-1)}{i!(j-1)!2^{i+j-2}}-\frac{\Gamma(j-1)}{(j-1)!}=\frac{1}{j-1}\Big(\sum_{i=0}^{j-2} \frac{\binom{i+j-2}{j-2}}{2^{i+j-2}}+\frac{\binom{2j-3}{j-2}}{2^{2j-3}}-1\Big)=\frac{\binom{2j-3}{j-2}}{(j-1)2^{2j-3}}.$$ Assume now that $\alpha\in (0,1)$. Recalling a series representation of $\var K_j(t)$ (see \eqref{eq:formvar}) and using $$\eee^{-x}\frac{x^j}{j!}\le 1- \eee^{-x}\sum_{i=0}^{j-1}\frac{x^i}{i!},\quad x\geq 0,$$ we infer
$$\var K_j(t)\ge \sum_{k\ge 1} \eee^{-2p_kt}\frac{(p_kt)^j}{j!}=2^{-j}\me K^\ast_j(2t),\quad t\geq 0,\,j\in\mn.$$ Here, the equality follows from the second equality in \eqref{eq:sumindic}.
With this at hand, invoking now \eqref{eq:momtons} and \eqref{eq:inter1} we obtain $$c_j\ge 2^{\alpha-j}\frac{\alpha\Gamma(j-\alpha)}{j!}>0,\quad j\in\mn.$$
\end{proof}

\subsection{Proofs of Theorems \ref{thm:Karlin0}, \ref{thm:Karlin} and \ref{thm:Karlin1}}
Since $K_j(t)$ is the sum of independent indicators, Theorems \ref{thm:Karlin0}, \ref{thm:Karlin} and \ref{thm:Karlin1} will be proved by an application of Theorem \ref{thm:main}. As a preparation we prove a lemma designed to check conditions (B2.1) and (B2.2) of Theorem \ref{thm:main} in the present setting.
\begin{lemma}\label{lem:karl}
Assume that either $\rho\in \Pi_{\ell,\,\infty}$ or $\rho$ is regularly varying at $\infty$ of index $\alpha\in(0,1]$. If $\rho\in\Pi_{\ell,\,\infty}$ and $j\in\mn$ or $\rho$ is regularly varying of index $\alpha\in (0,1)$ and $j\in\mn$ or $\alpha=1$ and $j\geq 2$, then for any positive functions $c$ and $d$ satisfying $\lim_{t\to\infty} c(t)=\infty$, $\lim_{t\to\infty}(c(t)/t)=0$ and $\lim_{t\to\infty}(d(t)/t)=\infty$,
\begin{equation}\label{eq:cutvar}
{\rm Var}\Big(\sum_{k\ge 1} \1_{\{c(t)<1/p_k\leq d(t)\}}\1_{\{\pi_k(t)\geq j\}}\Big) ~\sim~ {\rm Var}\,K_j(t),\quad t\to\infty.
\end{equation}
If $\alpha=1$ and $j=1$, then for any fixed $\varsigma\in [0,1)$, any $\varsigma_1\in (0,1]$ and $\varsigma_2\in [0,1)$ satisfying $\varsigma_1-\varsigma_2=1-\varsigma$ and any positive functions $c$ and $d$ satisfying $\lim_{t\to\infty}(\hat L(c(t))/\hat L(t))=\varsigma_1$, $\lim_{t\to\infty}(c(t)/t)=\infty$ provided that $\varsigma_1=1$  and $\lim_{t\to\infty}(\hat L(d(t))/\hat L(t))=\varsigma_2$,
\begin{equation}\label{eq:cutvarvarsigma}
{\rm Var}\Big(\sum_{k\ge 1} \1_{\{c(t)<1/p_k\leq d(t)\}}\1_{\{\pi_k(t)\geq j\}}\Big) ~\sim~ (1-\varsigma) {\rm Var}\,K_j(t),\quad t\to\infty.
\end{equation}
\end{lemma}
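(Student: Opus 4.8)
The plan is to write every variance occurring in the statement as a Lebesgue--Stieltjes integral against ${\rm d}\rho$ and to estimate the contributions of the removed boxes. Put $\phi_j(x):=\eee^{-x}\sum_{i=0}^{j-1}x^i/i!=\mmp\{{\rm Poisson}(x)\le j-1\}$. By the thinning representation \eqref{eq:sumindic} (for $j=1$ the indicator $\1_{\{\pi_k(t)\ge 1\}}$ is the occupancy indicator), independence of the processes $(\pi_k(t))_{k\in\mn}$ and $\rho(x)=0$ for $x\le 1$,
$$\var\Big(\sum_{k\ge 1}\1_{\{c(t)<1/p_k\le d(t)\}}\1_{\{\pi_k(t)\ge j\}}\Big)=\int_{(c(t),\,d(t)]}\phi_j(t/x)\big(1-\phi_j(t/x)\big)\,{\rm d}\rho(x),$$
while $\var K_j(t)=\int_{(1,\infty)}\phi_j(t/x)(1-\phi_j(t/x))\,{\rm d}\rho(x)$; hence the difference of the two splits into a nonnegative ``lower tail'' over $(1,c(t)]$ and a nonnegative ``upper tail'' over $(d(t),\infty)$, and both must be estimated.

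For the first assertion I would bound each tail from above. On $(1,c(t)]$ one has $t/x\ge t/c(t)=:u\to\infty$ and $\phi_j(t/x)(1-\phi_j(t/x))\le\phi_j(t/x)\le C(t/x)^{j-1}\eee^{-t/x}$; partitioning $(1,c(t)]$ according to $t/x\in[2^mu,2^{m+1}u)$, using that $y\mapsto y^{j-1}\eee^{-y}$ is eventually decreasing, and writing the number of boxes in the $m$-th block as $\rho(t/2^mu)-\rho(t/2^{m+1}u)$, I reduce to bounding $\rho$-increments of ratio $2$. For $\rho\in\Pi_{\ell,\,\infty}$ these increments are $\sim(\log 2)\ell(\cdot)$ by \eqref{eq:deHaan}, and for $\rho$ regularly varying of index $\alpha$ they are $O\big((t/2^mu)^\alpha L(\cdot)\big)$; in both cases Potter's bound (Theorem 1.5.6 in \cite{Bingham+Goldie+Teugels:1989}, applied to the slowly varying $\ell$, respectively to $\rho$) produces a majorant of the form $C\ell(t)(2^mu)^\delta$, respectively $C\rho(t)(2^mu)^{-\alpha+\delta}$, with $\delta>0$ small, valid for all blocks except the $O(\log t)$ high-index ones, on which $\phi_j(2^mu)$ is super-exponentially small and the contribution is negligible. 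Summing the resulting series in $m$ (dominated by $m=0$) gives bounds of order $\ell(t)u^{j-1+\delta}\eee^{-u}$, respectively $\rho(t)u^{j-1-\alpha+\delta}\eee^{-u}$, which are $o(\ell(t))$, respectively $o(\rho(t))$, since $u\to\infty$. An entirely symmetric dyadic estimate on $(d(t),\infty)$, where $t/x<t/d(t)=:1/v\to 0$ and $1-\phi_j(t/x)\le(t/x)^j/j!$, yields tails of order $\ell(t)v^{\delta-j}$, respectively $\rho(t)v^{\alpha+\delta-j}$, which vanish as $v\to\infty$ (here $\alpha<j$ in every case covered). Invoking the asymptotics $\var K_j(t)\sim C_j\ell(t)$, respectively $\var K_j(t)\sim c_j\rho(t)$ with $c_j>0$, from Lemma \ref{meanvar} then gives \eqref{eq:cutvar}.

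For the second assertion ($\alpha=1$, $j=1$, where $\phi_1(x)=\eee^{-x}$), the crucial ingredient is the precise relation
$$\frac{1}{t\hat L(t)}\int_{(A(t),\,B(t)]}\eee^{-t/x}\big(1-\eee^{-t/x}\big)\,{\rm d}\rho(x)~\longrightarrow~\varsigma_A-\varsigma_B,\quad t\to\infty,$$
whenever $A(t)\ge t$ eventually, $A(t)\le B(t)$, $\hat L(A(t))/\hat L(t)\to\varsigma_A$ and $\hat L(B(t))/\hat L(t)\to\varsigma_B$. I would prove this via the substitution $y=t/x$, the expansion $\eee^{-y}(1-\eee^{-y})\sim y$ as $y\to 0$, integration by parts and a Karamata-type argument along the lines of the proof of Lemma \ref{lem:alone}, the range $1/p_k\asymp t$ contributing only $O(tL(t))=o(t\hat L(t))$ by \eqref{eq:rate} and the far lower tail $1/p_k\ll t$ being absorbed by the same dyadic bound as above. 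Applying the relation to the intervals $(c(t),d(t)]$, $(1,c(t)]$ (when $\varsigma_1<1$, so that $c(t)/t\to\infty$ automatically, after splitting off the part $1/p_k\le t$ which is $o(t\hat L(t))$) and $(d(t),\infty)$, and disposing of the degenerate case $\varsigma_1=1$, $c(t)/t\to\infty$ via the lower-tail estimate together with $\hat L(c(t))\sim\hat L(t)$, one obtains that the truncated variance is asymptotic to $(\varsigma_1-\varsigma_2)\,t\hat L(t)=(1-\varsigma)\var K_1(t)$ by \eqref{eq:varalone}, which is \eqref{eq:cutvarvarsigma}.

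The main obstacle is the lower-tail estimate on $(1,c(t)]$ for an arbitrary admissible $c$: a single integration by parts only yields the bound $\phi_j(t/c(t))\,\rho(c(t))$, which is genuinely insufficient, since it need not be $o(\ell(t))$ when $c(t)/t\to 0$ slowly; one is therefore forced to exploit the super-exponential decay of $\phi_j$ across the entire tail through the dyadic decomposition, which in turn rests on uniform (Potter-type) control of both the increments of $\rho$ and the auxiliary function $\ell$. The analogous difficulty in the case $\alpha=1$, $j=1$ is to extract the exact constant $1-\varsigma$ rather than a mere inequality, which is what forces the sharp $\hat L$-asymptotic above.
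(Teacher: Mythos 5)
Your proposal is sound, and its first half takes a genuinely different route from the paper, while its second half is essentially the paper's own computation. For \eqref{eq:cutvar} the paper also reduces to showing that the two removed tails are $o(\var K_j(t))$, but it proceeds by integration by parts of $\int\phi_j(t/x)(1-\phi_j(t/x))\,{\rm d}\rho(x)$ and, in the de Haan case, by an exact rewriting of the integrand through the distribution function of $\min(\xi_j,\xi_j')$ (two independent Gamma$(j,1)$ variables), which turns the integral into one involving the increments $\rho(2t/y)-\rho(t/y)$ (see \eqref{eq:A} and the analogous decomposition of \eqref{eq:cutvarB}); these increments are of order $\ell$, and Potter's bound plus dominated convergence, together with sign arguments for the remaining terms, finish the job. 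Your dyadic decomposition of the removed ranges, with the uniform bound $\rho(2s)-\rho(s)\le(\log 2+1)\ell(s)$ (the paper's \eqref{eq:assump}) and Potter applied blockwise, attacks exactly the same difficulty --- that in the $\Pi_{\ell,\,\infty}$ case one needs $o(\ell(t))$ rather than $o(\rho(t))$ --- and is a legitimate, arguably more hands-on, alternative; the price is the bookkeeping of the $O(\log t)$ blocks near the boundary of the range of regular variation, which you correctly flag and dispose of. One small correction to your closing remark: in the regularly varying case the ``insufficient'' bound $\phi_j(t/c(t))\rho(c(t))$ is in fact sufficient, since monotonicity gives $\rho(c(t))\le\rho(t)$ and $\phi_j(t/c(t))\to 0$; the paper exploits precisely this (and an analogous bound for the integral term in \eqref{eq:cutvarB}), so the dyadic machinery is only really needed when $\rho\in\Pi_{\ell,\,\infty}$. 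For \eqref{eq:cutvarvarsigma} your scheme coincides with the paper's: since the indicators are independent, the truncated variance is the difference of the one-sided variances over $(c(t),\infty)$ and $(d(t),\infty)$, each of which is shown to be $\sim\varsigma_i\, t\hat L(t)$ by the small-$y$ behaviour $\eee^{-y}(1-\eee^{-y})\sim y$, the sandwich giving the factor $\hat L(h(t))/\hat L(t)$, and \eqref{eq:rate}, \eqref{eq:varalone}; your preliminary observation that $\varsigma_2<1$ (respectively $\varsigma_1<1$) forces $d(t)/t\to\infty$ (respectively $c(t)/t\to\infty$) is also the paper's first step in that case.
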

\begin{rem}\label{rem:exist}
If $\varsigma_1=1$, the existence of $c$ appearing in the second part of Lemma \ref{lem:karl} is secured by Lemma \ref{lem:KarlB2}. If $\varsigma_1<1$, one may take $c(t):=\hat{L}^{-1}(\varsigma_1\hat{L}(t))$ for large $t$, where $\hat{L}^{-1}$ is the inverse function of $\hat L$ which exists because $\hat{L}$ is decreasing and continuous for large $t$. The same argument applies also to $d$ if $\varsigma_2>0$. If $\varsigma_2=0$, the existence of $d$ is clear.
\end{rem}
\begin{proof}
Assume first that $\rho$ is regularly varying of index $\alpha\in (0,1)$ and $j\in\mn$ or $\alpha=1$ and $j\geq 2$. In view of \eqref{eq:var} it is enough to show that
\begin{equation}\label{eq:a}
{\rm Var}\,\Big(\sum_{k\ge 1} \1_{\{1/p_k> d(t)\}}\1_{\{\pi_k(t)\geq j\}}\Big)= o(\rho(t)), \quad t\to\infty
\end{equation}
and
\begin{equation}\label{eq:b}
{\rm Var}\,\Big(\sum_{k\ge 1} \1_{\{1/p_k\leq c(t)\}}\1_{\{\pi_k(t)\geq j\}}\Big)=o(\rho(t)), \quad t\to\infty.
\end{equation}

Recalling the first equality in \eqref{eq:sumindic} we start by writing
\begin{multline*}
{\rm Var}\,\Big(\sum_{k\ge 1} \1_{\{1/p_k> d(t)\}}\1_{\{\pi_k(t)\geq j\}}\Big)=\sum_{k\ge 1} \1_{\{1/p_k> d(t)\}}\eee^{-p_kt}\sum_{i=0}^{j-1}\frac{(p_kt)^i}{i!}\Big(1-\eee^{-p_kt}\sum_{i=0}^{j-1}\frac{(p_kt)^i}{i!}\Big)\\=\int_{(d(t),\,\infty)}\eee^{-t/x}\sum_{i=0}^{j-1}\frac{(t/x)^i}{i!}\Big(1-\eee^{-t/x}
\sum_{i=0}^{j-1}\frac{(t/x)^i}{i!}\Big){\rm d}\rho(x).
\end{multline*}
An integration by parts in combination with \eqref{eq:addineq1} yields
\begin{multline}
{\rm Var}\,\Big(\sum_{k\ge 1} \1_{\{1/p_k> d(t)\}}\1_{\{\pi_k(t)\geq j\}}\Big) =-\eee^{-t/d(t)}\sum_{i=0}^{j-1}\frac{(t/d(t))^i}{i!}\Big(1-\eee^{-t/d(t)}\sum_{i=0}^{j-1}\frac{(t/d(t))^i}{i!}\Big) \rho(d(t))\\+\int_0^{t/d(t)} \rho(t/y) \eee^{-y}\frac{y^{j-1}}{(j-1)!}\Big(2\eee^{-y}\sum_{i=0}^{j-1}\frac{y^i}{i!}-1\Big){\rm d}y=:-A_1(t)+A_2(t).\label{eq:cutvarA}
\end{multline}
In view of \eqref{eq:addineq}, $$0\leq \frac{A_1(t)}{\rho(t)}\leq \Big(\frac{t}{d(t)}\Big)^j \frac{\rho(d(t))}{\rho(t)}~\sim~\Big(\frac{L(d(t))}{(d(t))^{j-\alpha}}\Big)/\Big(\frac{L(t)}{t^{j-\alpha}}\Big),\quad t\to\infty.$$ By Lemma \ref{lem:reg}(a), the right-hand side converges to $0$ as $t\to\infty$ because $j-\alpha>0$ and $\lim_{t\to\infty}t^{-1}d(t)=\infty$.

Now we intend to show that $\lim_{t\to\infty} (A_2(t)/\rho(t))=0$. As a preparation, observe that, for each $y>0$, $$\lim_{t\to\infty}\1_{(0,\,t/d(t))}(y)\frac{\rho(t/y)}{\rho(t)} \eee^{-y}\frac{y^{j-1}}{(j-1)!}\Big(2\eee^{-y}\sum_{i=0}^{j-1}\frac{y^i}{i!}-1\Big)=0.$$ It remains to find an integrable majorant for the function under the limit. Let $y^*$ be the unique root of the equation $2\eee^{-y}\sum_{i=0}^{j-1}\frac{y^i}{i!}=1$. According to Potter's bound (Theorem 1.5.6 in \cite{Bingham+Goldie+Teugels:1989}), given $\delta>0$ there exists $x_0>1$ such that, for $y\le \frac{t}{x_0}$ and $t\geq x_0$,
$$
\frac{\rho(t/y)}{\rho(t)}\le 2\max(y^{-\alpha-\delta},\, y^{-\alpha+\delta}).
$$
Increasing $t$ if needed we can ensure that, together with the previous inequality, we also have $t/d(t)\leq y^\ast\wedge 1$. With this at hand, choosing $\delta>0$ such that $\alpha+\delta<j$ we conclude that
$$y\mapsto \1_{(0,\,y^\ast\wedge 1)}(y) 2\eee^{-y}\frac{y^{j-\alpha-\delta-1}}{(j-1)!}\Big(2\eee^{-y}\sum_{i=0}^{j-1}\frac{y^i}{i!}-1\Big),\quad y>0$$
is an integrable majorant. By Lebesgue's dominated convergence theorem $A_2/\rho$ does indeed vanish. The proof of \eqref{eq:a} is complete.

Passing to the proof of \eqref{eq:b} we obtain
\begin{multline}
\var \sum_{k\ge 1} \1_{\{1/p_k\le c(t)\}}\1_{\{\pi_k(t)\geq j\}}\\= \eee^{-t/c(t)}\sum_{i=0}^{j-1}\frac{(t/c(t))^i}{i!}\Big(1-\eee^{-t/c(t)}\sum_{i=0}^{j-1}\frac{(t/c(t))^i}{i!}\Big) \rho(c(t))\\
-\int_{t/c(t)}^t \rho(t/y) \eee^{-y}\frac{y^{j-1}}{(j-1)!}\Big(1-2\eee^{-y}\sum_{i=0}^{j-1}\frac{y^i}{i!}\Big){\rm d}y=:B_1(t)-B_2(t)\label{eq:cutvarB}
\end{multline}
Since $\lim_{t\to\infty}(t/c(t))=\infty$, we infer $$\lim_{t\to\infty}\eee^{-t/c(t)}\sum_{i=0}^{j-1}\frac{(t/c(t))^i}{i!}\Big(1-\eee^{-t/c(t)}\sum_{i=0}^{j-1}\frac{(t/c(t))^i}{i!}\Big)=0,$$ whence $B_1(t)=o(\rho(c(t)))=o(\rho(t))$ as $t\to\infty$. We have used the fact that $\rho$ is nondecreasing for the last asymptotic relation. Using monotonicity of $\rho$ once again yields, for large $t$,
\begin{multline*}
0\leq B_2(t)\le \rho(c(t)) \int_{t/c(t)}^t \eee^{-y}\frac{y^{j-1}}{(j-1)!}\Big(1-2\eee^{-y}\sum_{i=0}^{j-1}\frac{y^i}{i!}\Big){\rm d} y = o(\rho(c(t)))=o(\rho(t)), \quad t\to\infty,
\end{multline*}
and \eqref{eq:b} follows.

Consider now the case $\alpha=1$ and $j=1$. Since $\varsigma_1>\varsigma_2$ and $\hat{L}$ is nonincreasing, the assumptions of the lemma guarantee that, for large enough $t$, $c(t)<d(t)$. Thus, the sum $\sum_{k\geq 1}\1_{\{c(t)<1/p_k\leq d(t)\}}$ is nonzero for large $t$. Observe that
\begin{equation}\label{eq:aux}
\lim_{t\to\infty}(c(t)/t)=\lim_{t\to\infty}(d(t)/t)=\infty
\end{equation}
and particularly $\lim_{t\to\infty} c(t)=\lim_{t\to\infty}d(t)=\infty$. Indeed, assume on the contrary that $d(t_n)/t_n\leq c$ for a constant $c>0$ and large $n$, where $(t_n)_{n\in\mn}$ is some sequence of positive numbers diverging to $\infty$. Since the function $\hat L$ is nonincreasing we conclude that $\hat L(d(t_n))/\hat L(t_n)\geq \hat L(ct_n)/\hat L(t_n)\to 1$ as $n\to\infty$, a contradiction. The claim about $c$ follows analogously if $\varsigma_1<1$ and holds true by assumption if $\varsigma_1=1$.

For simplicity of presentation we assume that $\rho(t)=tL(t)$ for $t>0$ for some slowly varying $L$ rather than $\rho(t)\sim tL(t)$ as $t\to\infty$. In view of \eqref{eq:varalone} it is sufficient to prove that
\begin{equation}\label{eq:b1}
{\rm Var}\,\Big(\sum_{k\ge 1} \1_{\{1/p_k> c(t)\}}\1_{\{\pi_k(t)\geq 1\}}\Big)~\sim~ \varsigma_1 t\hat L(t), \quad t\to\infty.
\end{equation}
and
\begin{equation}\label{eq:a1}
{\rm Var}\,\Big(\sum_{k\ge 1} \1_{\{1/p_k> d(t)\}}\1_{\{\pi_k(t)\geq 1\}}\Big)~\sim~ \varsigma_2 t\hat L(t), \quad t\to\infty.
\end{equation}
The right-hand side of \eqref{eq:a1} reads $=o(t\hat L(t))$ if $\varsigma_2=0$. We shall prove \eqref{eq:b1} and \eqref{eq:a1} simultaneously. Let $(h, \varsigma^\ast)$ denote either $(c, \varsigma_1)$ or $(d, \varsigma_2)$. Using the $A_1$ and $A_2$ defined in \eqref{eq:cutvarA}, putting $j=1$ and replacing $d$ with $h$ we obtain $$0\leq \frac{A_1(t)}{t\hat L(t)}=(\eee^{-t/h(t)}-\eee^{-2t/h(t)})\frac{\rho(h(t))}{t\hat L(t)}\leq \frac{L(h(t))}{\hat L(t)}=\frac{L(h(t))}{\hat L(h(t))}\frac{\hat L(h(t))}{\hat L(t)}\to 0,\quad t\to\infty.$$ Here, while the first factor on the right-hand side converges to $0$ according to \eqref{eq:rate}, the second converges to $\varsigma^\ast$. Further, for $t$ large enough to ensure $t/h(t)\leq \log 2$ (this is possible in view of \eqref{eq:aux}), $$0\leq \frac{A_2(t)}{t\hat L(t)}=\int_0^{t/h(t)} \frac{\rho(t/y)}{t\hat L(t)} \eee^{-y}(2\eee^{-y}-1){\rm d}y\leq \frac{1}{\hat L(t)}\int_0^{t/h(t)}y^{-1}L(t/y){\rm d}y=\frac{\hat L(h(t))}{\hat L(t)}.$$ Given $\varepsilon\in (0,1)$ there exists $\delta>0$ such that, whenever $t/h(t)\leq \delta$,
$$\frac{A_2(t)}{t\hat L(t)}=\int_0^{t/h(t)} \frac{\rho(t/y)}{t\hat L(t)} \eee^{-y}(2\eee^{-y}-1){\rm d}y\geq (1-\varepsilon)\int_0^{t/h(t)} \frac{\rho(t/y)}{t\hat L(t)}{\rm d}y=(1-\varepsilon)\frac{\hat L(h(t))}{\hat L(t)}.$$ Sending first $t\to\infty$ and then $\varepsilon\to 0+$ we obtain \eqref{eq:b1} and \eqref{eq:a1}.

Assume now that $\rho\in\Pi_{\ell,\,\infty}$ and $j\in\mn$. In view of \eqref{eq:var0} it is enough to show that
\begin{equation}\label{eq:a121}
{\rm Var}\,\Big(\sum_{k\ge 1} \1_{\{1/p_k> d(t)\}}\1_{\{\pi_k(t)\geq j\}}\Big)= o(\ell(t)), \quad t\to\infty
\end{equation}
and
\begin{equation}\label{eq:b121}
{\rm Var}\,\Big(\sum_{k\ge 1} \1_{\{1/p_k\leq c(t)\}}\1_{\{\pi_k(t)\geq j\}}\Big)=o(\ell(t)), \quad t\to\infty.
\end{equation}

We intend to represent the right-hand side of \eqref{eq:cutvarA} in a form which is more suitable for the present proof. Let $\xi_j$ and $\xi_j^\prime$ be independent random variables having a gamma distribution with parameters $j$ and $1$, that is,
\begin{equation}\label{eq:pxi}
\mmp\{\xi_j\leq x\}=1-\eee^{-x}\sum_{i=0}^{j-1}\frac{x^i}{i!}=\int_0^x \eee^{-y}\frac{y^{j-1}}{(j-1)!}{\rm d}y,\quad x\geq 0.
\end{equation}
Then
\begin{multline*}
\mmp\{\min(\xi_j, \xi_j^\prime)\leq x\}=1-\eee^{-2x}\Big(\sum_{i=0}^{j-1}\frac{x^i}{i!}\Big)^2=2\int_0^x \eee^{-2y}\frac{y^{j-1}}{(j-1)!}\sum_{i=0}^{j-1}\frac{y^i}{i!}{\rm d}y\\=\int_0^{2x} \eee^{-y}\frac{(y/2)^{j-1}}{(j-1)!}\sum_{i=0}^{j-1}\frac{(y/2)^i}{i!}{\rm d}y,\quad x\geq 0.
\end{multline*}
This proves that
\begin{multline*}
-\eee^{-t/d(t)}\sum_{i=0}^{j-1}\frac{(t/d(t))^i}{i!}\Big(1-\eee^{-t/d(t)}\sum_{i=0}^{j-1}\frac{(t/d(t))^i}{i!}\Big) =\int_0^{t/d(t)} \eee^{-y}\frac{y^{j-1}}{(j-1)!}{\rm d}y\\-\int_0^{2t/d(t)} \eee^{-y}\frac{(y/2)^{j-1}}{(j-1)!}\sum_{i=0}^{j-1}\frac{(y/2)^i}{i!}{\rm d}y.
\end{multline*}
Using this we obtain an alternative representation of \eqref{eq:cutvarA} that we were aimed at:
\begin{multline}\label{eq:A}
{\rm Var}\, \sum_{k\ge 1} \1_{\{1/p_k>d(t)\}}\1_{\{\pi_k(t)\geq j\}}=\int_0^{t/d(t)} (\rho(2t/y)-\rho(t/y))\eee^{-y}\frac{y^{j-1}}{(j-1)!} \dd y\\+\int_{t/d(t)}^{2t/d(t)} (\rho(2t/y)-\rho(d(t)))\eee^{-y}\frac{y^{j-1}}{(j-1)!} \dd y\\+\int_0^{2t/d(t)} (\rho(2t/y)-\rho(d(t))) \eee^{-y}\frac{y^{j-1}}{(j-1)!}\Big(2^{1-j} \sum_{i=0}^{j-1}\frac{(y/2)^i}{i!}-1\Big)\dd y=:C_1(t)+C_2(t)+C_3(t).
\end{multline}

Since $\rho\in \Pi_\ell$,
there exists $x_1>0$ such that, for all $y\le t/x_1$ and large $t$,
\begin{equation}\label{eq:assump}
\frac{\rho(2t/y)-\rho(t/y)}{\ell(t/y)}\le \log2+1.
\end{equation}
According to Potter's bound (Theorem 1.5.6 in \cite{Bingham+Goldie+Teugels:1989}), given $\delta\in (0,1)$ there exists $x_0>1$ such that, for $y\le t/x_0$ and large $t$,
\begin{equation}\label{eq:potter}
\ell(t/y)/\ell(t) \le 2\max(y^{-\delta}, y^{\delta}).
\end{equation}
For $t$ so large that $d(t)\geq x^\ast:=\max(x_0, x_1)$ and $t/d(t)\leq 1$, we infer
\begin{equation*}
\frac{C_1(t)}{\ell(t)}\leq 2(\log 2+1)\int_0^{t/d(t)} y^{-\delta}\eee^{-y}\frac{y^{j-1}}{(j-1)!} \dd y \to 0,\quad t\to\infty.
\end{equation*}

Recalling that $\rho$ is a nondecreasing function we conclude that,
\begin{multline*}
\frac{C_2(t)}{\ell(t)}\le \frac{\rho(2d(t))-\rho(d(t))}{\ell(d(t))}\frac{\ell(d(t))}{\ell(t)}\int_{t/d(t)}^{2t/d(t)} \eee^{-y}\frac{y^{j-1}}{(j-1)!}\dd y\\\leq \eee^{-(j-1)}\frac{(j-1)^{j-1}}{(j-1)!}\frac{\rho(2d(t))-\rho(d(t))}{\ell(d(t))}\frac{\ell(d(t))/d(t)}{\ell(t)/t}\to 0,\quad t\to\infty.
\end{multline*}
Here, while the last factor converges to $0$ by an application of Lemma \ref{lem:reg}(a) to $f$ defined by $f(t)=t^{-1}\ell(t)$, the penultimate factor converges to $\log 2$ because $\rho\in \Pi_l$. Finally, $C_3(t)\leq 0$ for large $t$ satisfying $2t/d(t)\leq 1$, as a consequence of
\begin{equation*}
2^{1-j}\sum_{i=0}^{j-1} \frac{(y/2)^i}{i!}\le 2^{1-j} \sum_{i=0}^{j-1} 2^{-i}\le 2^{2-j}\le 1,\quad y\in [0,1],\ j\geq 2
\end{equation*}
and, for $j=1$,
\begin{equation*}
2^{1-j}\sum_{i=0}^{j-1} \frac{(y/2)^i}{i!}=1.
\end{equation*}
The proof of \eqref{eq:a121} is complete.

Mimicking the argument leading to \eqref{eq:A} we obtain an alternative representation of formula \eqref{eq:cutvarB}:
\begin{multline*}
{\rm Var}\,\Big(\sum_{k\ge 1} \1_{\{1/p_k\leq c(t)\}}\1_{\{\pi_k(t)\geq j\}}\Big)=\int_{t/c(t)}^{2t/c(t)} (\rho(c(t))-\rho(t/y)) \eee^{-y}\frac{y^{j-1}}{(j-1)!}\dd y\\+\int_{2t/c(t)}^{2t}(\rho(2t/y)-\rho(t/y)) \eee^{-y}\frac{y^{j-1}}{(j-1)!}\dd y+\int_{2t/c(t)}^{2t} (\rho(2t/y)-\rho(c(t)))\eee^{-y}\frac{y^{j-1}}{(j-1)!}\Big(2^{1-j} \sum_{i=0}^{j-1}\frac{(y/2)^i}{i!}-1 \Big)\dd y\\+\eee^{-2t}\Big(\sum_{i=0}^{j-1}\frac{(2t)^i}{i!}-\Big(\sum_{i=0}^{j-1}\frac{t^i}{i!}\Big)^2\Big)\rho(c(t))=:\sum_{k=1}^4 D_k(t).
\end{multline*}

Using the fact that $\rho$ is nondecreasing we obtain
\begin{multline*}
\frac{D_1(t)}{\ell(t)}=\int_{t/c(t)}^{2t/c(t)} \frac{\rho(c(t))-\rho(t/y)}{\ell(c(t))}\frac{\ell(c(t))}{\ell(t)} \eee^{-y}\frac{y^{j-1}}{(j-1)!}\dd y\\\le
 \frac{\rho(c(t))-\rho(c(t)/2)}{\ell(c(t))}\frac{c(t)\ell(c(t))}{t\ell(t)}\frac{t}{c(t)}\mmp\{\xi_j>t/c(t)\}\to 0,\quad t\to\infty.
\end{multline*}
Here, the first factor on the right-hand side converges to $\log 2$ because $\rho\in \Pi_{\ell,\,\infty}$, the second factor vanishes by Lemma \ref{lem:reg}(b) with $f$ given by $f(t)=t\ell(t)$, and the remainder vanishes because the tail of a gamma distribution decays exponentially fast, see \eqref{eq:pxi}.

Further, for $t$ so large that $2t/c(t)\geq 1$ and $c(t)\geq x^\ast$ with $x^\ast$ defined right after \eqref{eq:potter},
$$\frac{D_2(t)}{\ell(t)}=\int_{2t/c(t)}^{2t}\frac{\rho(2t/y)-\rho(t/y)}{\ell(t/y)}\frac{\ell(t/y)}{\ell(t)}\eee^{-y}\frac{y^{j-1}}{(j-1)!}\dd y=\int_{2t/c(t)}^{t/x^\ast}\ldots+\int_{t/x^\ast}^{2t}\ldots=:D_{21}(t)+D_{22}(t).$$ An application of \eqref{eq:assump} and \eqref{eq:potter} yields
$$D_{21}(t)\le 2\big(\log 2+1\big)\int_{2t/c(t)}^{t/x^*} y^{\delta}\eee^{-y}\frac{y^{j-1}}{(j-1)!} \dd y\to 0,\quad t\to\infty.$$ Since $\rho$ is nondecreasing,
$$D_{22}(t)\le \frac{\rho(2x^*)}{\ell(t)}\int_{t/x^*}^\infty \eee^{-y} \frac{y^{j-1}}{(j-1)!} \dd y\to 0,\quad t\to\infty.$$

Observe now that $\rho(2t/y)-\rho(c(t))\leq 0$ under the integral in $D_3$ and that $2^{1-j}\sum_{i=0}^{j-1}\frac{(y/2)^i}{i!}\ge 1$ for $y>0$ if $j=1$ and for $y\geq 2^j$ if $j\geq 2$. This demonstrates that $D_3(t)\leq 0$ for large $t$ satisfying $2t/c(t)\geq 2^j$. Finally, $D_4(t)\leq 0$ for all $t>0$ as a consequence of $$\sum_{i=0}^{j-1}\frac{(2t)^i}{i!}-\Big(\sum_{i=0}^{j-1}\frac{t^i}{i!}\Big)^2=-\sum_{i=j}^{2(j-1)}\frac{t^i}{i!}\sum_{k=i-(j-1)}^{j-1}\binom{i}{k}\leq 0,\quad t>0.$$
The proof of \eqref{eq:b121} is complete.
\end{proof}

We shall check that, under the assumptions of Theorems \ref{thm:Karlin0}, \ref{thm:Karlin} and \ref{thm:Karlin1} (for the latter provided that either $j\geq 2$ or $j=1$ and
\eqref{eq:exotic} holds), all the conditions of Theorem \ref{thm:main} are satisfied. Also, we shall show that if under the assumptions of Theorem \ref{thm:Karlin1} relation \eqref{eq:exotic} fails to hold in the case $j=1$, then all the conditions of Proposition \ref{lilhalf1} are satisfied.

\noindent {\sc Condition (A1)} is secured by \eqref{eq:var0}, \eqref{eq:var} and \eqref{eq:varalone}, respectively.

\noindent {\sc Condition (A2)}. Invoking \eqref{eq:var0} in combination \eqref{eq:slowly} or \eqref{eq:slowly2} secures (A2) under the assumptions of Theorem \ref{thm:Karlin0}. By Theorem 1.5.3 in \cite{Bingham+Goldie+Teugels:1989}, formulae \eqref{eq:var} and \eqref{eq:varalone} ensure (A2) under the assumptions of Theorems \ref{thm:Karlin} and \ref{thm:Karlin1}, respectively.

\noindent {\sc Condition (A4)} holds trivially.

\noindent {\sc Conditions (A5) and (B1)}. Since the function $b$ given by $$b(t)=\me K_j(t)=\sum_{k\geq 1}(1-\eee^{-p_kt}\sum_{i=0}^{j-1}((p_kt)^i/(i!))),\quad t\geq 0$$ is strictly increasing and continuous, condition (A5) holds by a sufficient condition given in Remark \ref{suff}. Since the function $a$ given by $$a(t)=\var K_j(t)=\sum_{k\geq 1}\eee^{-p_kt}\sum_{i=0}^{j-1}\frac{(p_kt)^i}{i!}\Big(1-\eee^{-p_kt}\sum_{i=0}^{j-1}\frac{(p_kt)^i}{i!}\Big),\quad t\geq 0$$ is continuous, the first part of condition (B1) also holds.

\noindent {\sc Condition (A3)}. If in the setting of Theorem \ref{thm:Karlin0} relation \eqref{eq:slowly} prevails, then condition (A3) holds, with $\mu=1/\beta+1$, according to \eqref{eq:mean0}, \eqref{eq:var0} and \eqref{eq:rho}. If in the setting of Theorem \ref{thm:Karlin0} relation \eqref{eq:slowly2} prevails, then condition (A3) holds, with $\mu=1$ and $q=1/\lambda-1$, according to \eqref{eq:mean0}, \eqref{eq:var0} and \eqref{eq:rho2}. Condition (A3), with $\mu=1$ and $q=0$, is secured by \eqref{eq:momincreas} and \eqref{eq:var} in the setting of Theorem \ref{thm:Karlin}; by \eqref{eq:varalone} if $j=1$ and by \eqref{eq:momincreas} and \eqref{eq:var} if $j\ge 2$ in the setting of Theorem \ref{thm:Karlin1}.

Assume that in the setting of Theorem \ref{thm:Karlin1} condition \eqref{eq:exotic} fails to hold. Then Proposition \ref{lilhalf1} ensures \eqref{eq:LILkar200}. It will be explained in Remark \ref{rem:counterex} that, under failure of \eqref{eq:exotic}, relation \eqref{eq:LILkar200} is the best one can hope for as far as an application of Theorem \ref{thm:main} is concerned.

Now we shall prove that condition (B2.1)
holds in the case $j=1$ under the assumptions of Theorem \ref{thm:Karlin1} when \eqref{eq:exotic} prevails, and that condition (B2.2) holds under the assumptions of Theorems \ref{thm:Karlin0}, \ref{thm:Karlin} and \ref{thm:Karlin1} (the latter with $j\geq 2$).

\noindent {\sc Condition (B2.1)}. Assume that $\rho$ is regularly varying at $\infty$ of index $\alpha=1$ and $j=1$. For each $\varsigma\in (0,1)$, define the set $R_\varsigma(t)$ appearing in condition (B2.1)
by $$R_\varsigma(t)=\{k\in\mn: c(t)<1/p_k\leq d(t)\},\quad t\geq 1$$ with any positive functions $c$ and $d$ satisfying
$\lim_{t\to\infty}(\hat L(c(t))/\hat L(t))=1$, $\lim_{t\to\infty}(c(t)/t)=\infty$ and $\lim_{t\to\infty}(\hat L(d(t))/\hat L(t))=\varsigma$ (according to Remark \ref{rem:exist}, such functions $c$ and $d$ do exist). Then,~\eqref{eq:cutvarvarsigma} entails \eqref{eq:onevar}.

Recall that condition (A3) holds with $\mu=1$ and $q=0$, so that $w_n(\gamma, 1)=\exp(n^{1+\gamma})$. In view of \eqref{eq:varalone}, $\exp(n^{1+\gamma})/2\leq \tau_n\leq 2\exp(n^{1+\gamma})$ for large $n$. This together with \eqref{eq:exotic}, slow variation and monotonicity of $\hat L$ proves $\lim_{n\to\infty}(\hat L(\tau_{n+1})/\hat L(\tau_n))=0$. We intend to show that $d(\tau_n)<c(\tau_{n+1})$ for large $n$. Since $\hat L$ is nonincreasing, it suffices to check that, for large $n$,
\begin{equation}\label{eq:intermed}
\frac{\hat L(d(\tau_n))}{\hat L(\tau_n)}>\frac{\hat L(c(\tau_{n+1}))}{\hat L(\tau_{n+1})}\frac{\hat L(\tau_{n+1})}{\hat L(\tau_n)}.
\end{equation}
The latter holds true, for the left-hand side converges as $n\to\infty$ to $\varsigma$, whereas the right-hand side converges to $1\times 0=0$. Thus, there exists $n_0\in\mn$ such that the sets $R_\varsigma(\tau_{n_0})$, $R_\varsigma(\tau_{n_0+1}),\ldots$ are disjoint, and condition (B2.1)does indeed holds.

\noindent {\sc Condition (B2.2)}. Assume that either \eqref{eq:slowly} or \eqref{eq:slowly2} holds true (so that $\rho\in\Pi_{\ell,\,\infty}$) and $j\in\mn$, or $\rho$ is regularly varying at $\infty$ of index $\alpha\in (0,1)$ and $j\in\mn$, or $\rho$ is regularly varying of index $\alpha=1$ and $j\geq 2$. Put $c(t):=t/\log t$ and $d(t):=t\log t$ and define the set $R_0(t)$ appearing in condition (B2.2) by $$R_0(t)=\{k\in\mn: c(t)<1/p_k\leq d(t)\},\quad t\geq 1.$$ With this choice relation \eqref{eq:one} holds according to \eqref{eq:cutvar}.

If in the setting of Theorem \ref{thm:Karlin0} relation \eqref{eq:slowly} prevails then $\mu=1/\beta+1>1$, whence $w_n(\gamma, \mu)=n^{(1+\gamma)\beta}$. In view of \eqref{eq:var0} and \eqref{eq:slowly} $\tau_n$ is equal asymptotically to $\exp(n^{1+\gamma})$ times terms of smaller orders. If in the setting of Theorem \ref{thm:Karlin0} relation \eqref{eq:slowly2} prevails then $\mu=1$ and $q=1/\lambda-1>0$, whence $w_n(\gamma, 1)=\exp(n^{(1+\gamma)\lambda})$. In view of \eqref{eq:var0} and \eqref{eq:slowly2} $\tau_n$ is equal asymptotically to $\exp(\sigma^{-1/\lambda} n^{1+\gamma})$ times terms of smaller orders. In the settings of Theorems \ref{thm:Karlin} and \ref{thm:Karlin1} (the latter with $j\ge 2$) $\mu=1$ and $q=0$, so that $w_n(\gamma, 1)=\exp(n^{1+\gamma})$. According to \eqref{eq:var}, $\tau_n$ is equal asymptotically to $\exp(\alpha^{-1} n^{1+\gamma})$ times terms of smaller orders. Thus, in all the considered cases while $\lim_{n\to\infty}(\tau_{n+1}/\tau_n)=\infty$ superexponentially fast, $\log \tau_n$ exhibits a polynomial growth. As a consequence, the inequality $d(\tau_n)<c(\tau_{n+1})$ holds for large enough $n$, that is, there exists $n_0\in\mn$ such that the sets $R_0(\tau_{n_0})$, $R_0(\tau_{n_0+1}),\ldots$ are disjoint. We have shown that condition (B2.2)
holds true unless $\alpha=1$ and $j=1$.

The proofs of Theorems \ref{thm:Karlin0}, \ref{thm:Karlin} and \ref{thm:Karlin1} are complete.

\begin{rem}\label{rem:counterex}
Assume that $\rho(t)\sim tL(t)$ as $t\to\infty$ for some $L$ slowly varying at $\infty$ and that, for some $\gamma>0$ small enough,
\begin{equation}\label{eq:nonexot}
\liminf_{n\to\infty}\frac{\hat L(\exp(n+1)^{1+\gamma})}{\hat L(\exp(n^{1+\gamma}))}>0,
\end{equation}
that is, condition \eqref{eq:exotic} fails to hold. Then the second parts of conditions (B2.1) and (B2.2), with appropriately chosen $R_\varsigma(t)$ and $R_0(t)$, hold, whereas the first parts of conditions (B2.1) and (B2.2) fail to hold.

For each $\varsigma\in [0,1)$, define $$R_\varsigma(t)=\{k\in\mn: c(t)<1/p_k\leq d(t)\},\quad t\geq 1$$ with any positive functions $c$ and $d$ satisfying
$\lim_{t\to\infty}(\hat L(c(t))/\hat L(t))=\varsigma_1$, $\lim_{t\to\infty}(c(t)/t)=\infty$ provided that $\varsigma_1=1$ and $\lim_{t\to\infty}(\hat L(d(t))/\hat L(t))=\varsigma_2$, where $\varsigma_1\in (0,1]$ and $\varsigma_2\in [0,1)$ are any fixed numbers such that $\varsigma_1-\varsigma_2=1-\varsigma$. If $\varsigma=0$, then $\varsigma_1=1$ and $\varsigma_2=0$ and \eqref{eq:cutvarvarsigma}
entails \eqref{eq:one}. If $\varsigma\in (0,1)$, then \eqref{eq:cutvarvarsigma} secures \eqref{eq:onevar}.

Arguing as in the previous proof one can check that \eqref{eq:nonexot} guarantees that $$\liminf_{n\to\infty}\frac{\hat L(\tau_{n+1})}{\hat L(\tau_n)}>0.$$ We claim that inequality \eqref{eq:intermed} cannot hold for all $n$ large enough and neither can $d(\tau_n)<c(\tau_{n+1})$ because $\hat L$ is nonincreasing. This is obvious if $\varsigma=0$, for the limit of the left-hand side of \eqref{eq:intermed} is then $0$, whereas the limit inferior of the right-hand side is positive. If $\varsigma\in (0,1)$ is close to $0$, then $\varsigma_1$ is close to $1$ and $\varsigma_2$ is close to $0$. Hence, \eqref{eq:intermed} fails to hold for large $n$ in this case, too.
\end{rem}

\subsection{Proofs of Proposition \ref{equivvar} and Theorem \ref{thm:depoiss}}

We start with an auxiliary result which is an important ingredient of our proof
of Proposition \ref{equivvar}.
\begin{lemma}\label{cutsum}
Assume that either $\rho\in \Pi_{\ell,\,\infty}$ or $\rho$ is regularly varying at $\infty$ of index $\alpha\in(0,1]$.
Then for $l, j\ge 2$ and a positive constant $C_{\alpha,\,l,\,j}$,
\begin{equation*}
\lim_{n\to\infty}\frac{\sum_{i\ge 1}\binom{n}{l}p_i^l(1-p_i)^{n}}{\var K_j(n)}=C_{\alpha,\,l,\,j}.
\end{equation*}
\end{lemma}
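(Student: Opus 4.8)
The plan is to recognize the numerator $N(n):=\sum_{i\ge 1}\binom{n}{l}p_i^l(1-p_i)^n$ as a small perturbation of the expected number of boxes carrying exactly $l$ balls at time $n$ in the \emph{deterministic} Karlin scheme. Since box $i$ receives a $\mathrm{Binomial}(n,p_i)$ number of balls after $n$ throws, $\me\mathcal{K}_l^\ast(n)=\sum_{i\ge 1}\binom{n}{l}p_i^l(1-p_i)^{n-l}$, whereas $N(n)=\sum_{i\ge 1}\binom{n}{l}p_i^l(1-p_i)^{n-l}(1-p_i)^l$ (note $p_k<1$ for every $k$). First I would use $p\le 1-(1-p)^l\le lp$ for $p\in[0,1]$ to obtain
\begin{equation*}
0\le \me\mathcal{K}_l^\ast(n)-N(n)=\sum_{i\ge1}\binom{n}{l}p_i^l(1-p_i)^{n-l}\bigl(1-(1-p_i)^l\bigr)\le l\sum_{i\ge1}\binom{n}{l}p_i^{l+1}(1-p_i)^{n-l},
\end{equation*}
and then, using $\binom{n}{l}=\tfrac{l+1}{n-l}\binom{n}{l+1}$ and $(1-p_i)^{n-l}\le(1-p_i)^{n-l-1}$, bound the last sum by $\tfrac{l+1}{n-l}\me\mathcal{K}_{l+1}^\ast(n)$. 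This yields the sandwich $0\le \me\mathcal{K}_l^\ast(n)-N(n)\le \tfrac{l(l+1)}{n-l}\me\mathcal{K}_{l+1}^\ast(n)$ for $n>l$.

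Next I would de-Poissonize the means. Since $\me\mathcal{K}_m^\ast(n)=\me\mathcal{K}_m(n)-\me\mathcal{K}_{m+1}(n)$ and $\me K_m^\ast(n)=\me K_m(n)-\me K_{m+1}(n)$, relation \eqref{eq:means} yields $\me\mathcal{K}_m^\ast(n)=\me K_m^\ast(n)+o(1)$ as $n\to\infty$ for every $m\in\mn$. Write $g:=\ell$ when $\rho\in\Pi_{\ell,\,\infty}$ and $g:=\rho$ when $\rho$ is regularly varying of index $\alpha\in(0,1]$; in both cases $g(n)\to\infty$. By Lemma \ref{meanvar}, namely \eqref{eq:meK*0} and \eqref{eq:momtons}, one has $\me K_m^\ast(n)\sim \varkappa_m g(n)$ with a strictly positive constant $\varkappa_m$ for every admissible index $m$ (in the de Haan case $\varkappa_m=1/m$; in the regular variation case $\varkappa_m=\alpha\Gamma(m-\alpha)/m!$, and \eqref{eq:momtons} is available for all $m\in\mn$ if $\alpha\in(0,1)$ and for $m\ge 2$ if $\alpha=1$). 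Since $l\ge 2$, both $m=l$ and $m=l+1$ are admissible, so $\me\mathcal{K}_l^\ast(n)\sim\varkappa_l g(n)$ and $\me\mathcal{K}_{l+1}^\ast(n)=O(g(n))$; therefore the right-hand side of the sandwich is $O(g(n)/n)=o(g(n))$, and consequently $N(n)\sim\me K_l^\ast(n)$, i.e. $N(n)\sim\ell(n)/l$ in the de Haan case and $N(n)\sim \tfrac{\alpha\Gamma(l-\alpha)}{l!}\rho(n)$ in the regular variation case.

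To conclude I would divide by $\var K_j(n)$ and invoke Lemma \ref{meanvar} once more: $\var K_j(n)\sim v_j\ell(n)$ with $v_j=\log 2-\sum_{k=1}^{j-1}\tfrac{(2k-1)!}{(k!)^2 2^{2k}}$ in the de Haan case by \eqref{eq:var0}, and $\var K_j(n)\sim c_j\rho(n)$ with the constant $c_j$ of \eqref{eq:var} in the regular variation case; the hypothesis $j\ge 2$ is precisely what makes \eqref{eq:var} applicable with $c_j>0$ when $\alpha=1$. As proportionality constants in the asymptotics of variances that diverge to $\infty$, both $v_j$ and $c_j$ are strictly positive. Hence $\lim_{n\to\infty}N(n)/\var K_j(n)=C_{\alpha,\,l,\,j}$ with $C_{0,\,l,\,j}=1/(lv_j)>0$ and $C_{\alpha,\,l,\,j}=\alpha\Gamma(l-\alpha)/(l!\,c_j)>0$ for $\alpha\in(0,1]$.

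I do not anticipate a genuine obstacle: once Lemma \ref{meanvar} is in hand, everything reduces to the elementary combinatorial sandwich above plus routine bookkeeping with de Haan and regularly varying functions. The only point requiring attention is that passing from $\me\mathcal{K}_l^\ast$ to $\me\mathcal{K}_{l+1}^\ast$ shifts the order index by one, and one must check that the shifted index still lies in the range where \eqref{eq:momtons} (and the positivity part of \eqref{eq:var}) hold; this is ensured exactly by the standing hypotheses $l\ge 2$ and $j\ge 2$. If one prefers to bypass \eqref{eq:means}, an alternative is to compare $N(n)$ directly with the Poissonized mean $\me K_l^\ast(n)$ by writing $N(n)=\binom{n}{l}\int_{(1,\infty)}x^{-l}(1-1/x)^n\,\dd\rho(x)$, integrating by parts, substituting $y=n/x$, and applying dominated convergence with Potter's bound along the lines of the proof of \eqref{eq:momincreas} in Lemma \ref{meanvar}.
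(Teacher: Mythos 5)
Your proposal is correct and follows essentially the same route as the paper: identify the numerator with the deterministic-scheme mean of exact counts $\me\mathcal{K}_l^\ast$, de-Poissonize the means via Lemma 1 of \cite{Gnedin+Hansen+Pitman:2007} (equivalently \eqref{eq:means}), and then divide by $\var K_j(n)$ using the asymptotics \eqref{eq:var0}, \eqref{eq:meK*0}, \eqref{eq:var}, \eqref{eq:momtons} of Lemma \ref{meanvar}, the hypotheses $l,j\ge 2$ guaranteeing applicability when $\alpha=1$. The only difference is cosmetic: the paper absorbs the exponent mismatch through the exact identity $\sum_{i\ge 1}\binom{n}{l}p_i^l(1-p_i)^n=\me\mathcal{K}_l^\ast(n+l)\binom{n}{l}\big/\binom{n+l}{l}$ and then uses regular variation of $t\mapsto\var K_j(t)$ to pass from $n+l$ back to $n$, whereas you stay at time $n$ and control the factor $(1-p_i)^l$ by your sandwich involving $\me\mathcal{K}_{l+1}^\ast(n)/n$; both devices are valid.
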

\begin{proof}
Using \eqref{eq:var0} and \eqref{eq:meK*0} or \eqref{eq:var} and \eqref{eq:momtons} we infer, for $l\ge 2$ and a positive constant $C_{\alpha,\,l,\,j}$,
$$\lim_{n\to\infty} \frac{\me K_{l}^*(n+l)}{\var K_{j}(n)}=\lim_{n\to\infty} \frac{\me K_{l}^*(n+l)}{\var K_{j}(n+l)}=C_{\alpha,\,l,\,j}.$$
Here, the first equality is a consequence of $\lim_{n\to\infty}(\var K_{j}(n+l)/\var K_j(n))=1$, which is justified by the regular variation of $t\mapsto\var K_j(t)$.
According to Lemma 1 in \cite{Gnedin+Hansen+Pitman:2007},
for $l\in\mn$,
\begin{equation*}
\lim_{n\to\infty}|\me \mathcal{K}_l^*(n)-\me K_l^*(n)|=0
\end{equation*}
(the asymptotic relation holds true for arbitrary discrete probability distribution $(p_k)_{k\in\mn}$, not necessarily satisfying the regular variation assumption).
This entails $$\sum_{i\ge 1}\binom{n}{l}p_i^l(1-p_i)^{n}=\me \mathcal{K}_{l}^*(n+l) \binom{n}{l}\Big/ \binom{n+l}{l}~\sim~\me K_{l}^*(n+l),\quad n\to\infty.$$ The proof of Lemma \ref{cutsum} is complete.
\end{proof}

\begin{proof}[Proof of Proposition \ref{equivvar}]
If $j=1$, then, according to \eqref{eq:var0} and the assumption $\lim_{t\to\infty}\ell(t)=\infty$ or \eqref{eq:var}, or \eqref{eq:varalone}, $\lim_{n\to\infty}\var K_1(n)=\infty$. With this at hand, relation
\eqref{eq:ratio} follows from Lemma 4 in \cite{Gnedin+Hansen+Pitman:2007}, as has already been mentioned.

From now on we are concerned with the case $j\geq 2$ (however, some formulae that follow are valid for $j\in\mn$). Then, by \eqref{eq:var0} or \eqref{eq:var},
\begin{equation}\label{eq:zero_var}
\lim_{t\to\infty}\frac{\var K_j(t)}{t}=0.
\end{equation}
For $k,j,n\in\mn$, denote by $A_k(j,n)$ the event $\{\text{the box}~ k~\text{contains at least}~j~ \text{balls out of}~ n\}$. Then $$
\var \mathcal{K}_j(n)=\sum_{k\ge 1}\mmp(A_k(j,n))(1-\mmp(A_k(j,n)))+\sum_{i\neq k}(\mmp(A_i(j,n)\cap A_k(j,n))-\mmp(A_i(j,n))\mmp(A_k(j,n))).$$ We shall prove that
\begin{equation}\label{eq:variance}
\lim_{n\to\infty} \frac{\sum_{k\ge 1}\mmp(A_k(j,n))(1-\mmp(A_k(j,n)))-\var K_j(n)}{\var K_j(n)}=0
\end{equation}
and
\begin{equation}\label{eq:12}
\lim_{n\to\infty}\frac{\sum_{i\neq k}(\mmp(A_i(j,n)\cap A_k(j,n))-\mmp(A_i(j,n))\mmp(A_k(j,n)))}{\var K_j(n)}=0.
\end{equation}

\noindent {\sc Proof of \eqref{eq:variance}}. Note that $$\mmp(A_k(j,n))=1-\sum_{i=0}^{j-1}\binom{n}{i}p_k^i(1-p_k)^{n-i}$$ and $$\var K_j(n)=\sum_{k\ge 1}\sum_{i=0}^{j-1}\eee^{-p_kn}\frac{(p_kn)^i}{i!}\Big(1-\sum_{i=0}^{j-1}\eee^{-p_kn}\frac{(p_kn)^i}{i!}\Big).$$ Hence, the numerator in \eqref{eq:variance} is equal to
\begin{multline*}
\sum_{k\ge 1}\Big(\sum_{i=0}^{j-1}\Big(\binom{n}{i}p_k^i(1-p_k)^{n-i}-\eee^{-p_kn}\frac{(p_kn)^i}{i!}\Big)\\-\sum_{i=0}^{j-1}\sum_{m=0}^{j-1}\Big(\binom{n}{i}\binom{n}{m}p_k^{i+m}(1-p_k)^{2n-i-m}-\eee^{-2p_kn}\frac{(p_kn)^{i+m}}{i!m!}\Big)\Big).
\end{multline*}
The penultimate inequality in the proof of Lemma 2.13 in \cite{Iksanov+Kotelnikova:2022} states that, for large enough $n$ and all $i\in\mn_0$, $i\le n$,
\begin{equation}\label{eq:ineq1}
	-B_ip_k\le \binom{n}{i}p_k^i(1-p_k)^{n-i}-\eee^{-p_kn}\frac{(p_kn)^i}{i!}\le A_ip_k
\end{equation}
for some positive constants $A_i$ and $B_i$. Therefore,
$$
\sum_{k\ge 1} \sum_{i=0}^{j-1}\Big|\binom{n}{i}p_k^i(1-p_k)^{n-i}-\eee^{-p_kn}\frac{(p_kn)^i}{i!}\Big|\le \sum_{i=0}^{j-1} \max(A_i,B_i)=o(\var K_j(n)),\quad n\to\infty
$$
because $\lim_{n\to\infty}\var K_j(n)=\infty$. Further, write
\begin{multline*}
\eee^{-2p_kn}\frac{(p_kn)^{i+m}}{i!m!}-\binom{n}{i}\binom{n}{m}p_k^{i+m}(1-p_k)^{2n-i-m}\\=\binom{i+m}{i}2^{-(i+m)}\Big(\eee^{-2p_kn}\frac{(2p_kn)^{i+m}}{(i+m)!}-2^{i+m}\frac{(n!)^2}{(n-i)!(n-m)!(i+m)!}p_k^{i+m}(1-p_k)^{2n-i-m}
\Big)\\=\binom{i+m}{i}2^{-(i+m)}\Big(\eee^{-2p_kn}\frac{(2p_kn)^{i+m}}{(i+m)!}-\binom{2n}{i+m}p_k^{i+m}(1-p_k)^{2n-i-m}\Big)\\+\binom{i+m}{i}2^{-(i+m)}
\Big(\binom{2n}{i+m}-2^{i+m}\frac{(n!)^2}{(n-i)!(n-m)!(i+m)!}\Big)p_k^{i+m}(1-p_k)^{2n-i-m}\\=:D_{i,m,k}(n)+E_{i,m,k}(n).
\end{multline*}
By \eqref{eq:ineq1},
$$
\sum_{k\ge 1}\sum_{i=0}^{j-1}\sum_{m=0}^{j-1}|D_{i,m, k}(n)|\le \sum_{i=0}^{j-1}\sum_{m=0}^{j-1}\binom{i+m}{i}2^{-(i+m)}\max(A_{i+m},B_{i+m})=o(\var K_j(n)),\quad n\to\infty.
$$
Observe now that $E_{0,0, k}(n)=E_{0,1, k}(n)=E_{1,0, k}(n)=0$. For $i$ and $m$ such that $i+m\ge 2$, $\binom{2n}{i+m}-2^{i+m}\frac{(n!)^2}{(n-i)!(n-m)!(i+m)!}$ is a polynomial of order at most $i+m$. Since the coefficient in front of $n^{i+m}$ is $0$, we infer
$$\Big|\binom{2n}{i+m}-2^{i+m}\frac{(n!)^2}{(n-i)!(n-m)!(i+m)!}\Big|\le C_{i,m} n^{i+m-1}$$
for a positive constant $C_{i,m}$. This in combination with Lemma \ref{cutsum} (put there $l=i+m$ and replace $n$ by $2n-i-m$) and \eqref{eq:zero_var} yields, for $i,m\leq j-1$ such that $i+m\geq 2$,
$$
\sum_{k\ge 1}|E_{i,m, k}(n)|\le C_{i,m}\binom{i+m}{i}2^{-(i+m)}\sum_{k\ge 1}n^{i+m-1}p_k^{i+m}(1-p_k)^{2n-i-m}~\sim~ {\rm const} \frac{\var K_j(n)}{n}\to 0
$$
as $n\to\infty$. Hence, $\lim_{n\to\infty}\sum_{k\ge 1}\sum_{i=0}^{j-1}\sum_{m=0}^{j-1}|E_{i,m, k}(n)|=0$. The proof of \eqref{eq:variance} is complete.

\noindent {\sc Proof of \eqref{eq:12}}. We start by writing
\begin{multline*}
\mmp(A_i(j,n)\cap A_k(j,n))-\mmp(A_i(j,n))\mmp(A_k(j,n))\\=\mmp((A_i(j,n))^c\cap (A_k(j,n))^c)-\mmp((A_i(j,n))^c)\mmp((A_k(j,n))^c)
\\=\sum_{a=0}^{j-1}\sum_{b=0}^{j-1}\Big(\binom{n}{a}\binom{n-a}{b}p_i^ap_k^b(1-p_i-p_k)^{n-a-b}-\binom{n}{a}\binom{n}{b}p_i^ap_k^b(1-p_i)^{n-a}(1-p_k)^{n-b}\Big)\\
=:\sum_{a=0}^{j-1}\sum_{b=0}^{j-1}C_{a,b}(i,k,n),
\end{multline*}
where $X^c$ denotes the complement of a set $X$. Further, we obtain
\begin{multline*}
C_{a,b}(i,k,n)=\binom{n}{a}\binom{n-a}{b}p_i^ap_k^b\Big((1-p_i-p_k)^{n-a-b}-(1-p_i)^{n-a}(1-p_k)^{n-b}\Big)\\-\binom{n}{a}\Big(\binom{n}{b}-\binom{n-a}{b}\Big)p_i^ap_k^b(1-p_i)^{n-a}(1-p_k)^{n-b}=:
C^{(1)}_{a,b}(i,k,n)+C^{(2)}_{a,b}(i,k,n).
\end{multline*}
To analyze $C^{(1)}_{a,b}$ we argue as on p.~152 in \cite{Gnedin+Hansen+Pitman:2007}. Using an expansion $$(x-y)^m=x^m-mx^{m-1}y+O(m^2x^{m-2}y^2),\quad m\to\infty,$$ which holds for positive $x$ and $y$, $x>y$, with $x=(1-p_i)(1-p_k)$, $y=p_ip_k$ and $m=n-a-b$ for $0\le a,b\le j-1$ yields
\begin{multline*}
(1-p_i-p_k)^{n-a-b}=(1-p_i)^{n-a-b}(1-p_k)^{n-a-b}-(n-a-b)p_ip_k (1-p_i)^{n-a-b-1}(1-p_k)^{n-a-b-1}\\+O(n^2p_i^2 p_k^2(1-p_i)^{n-a-b-2}(1-p_k)^{n-a-b-2}),\quad n\to\infty.
\end{multline*}
Hence,
\begin{multline*}
C^{(1)}_{a,b}(i,k,n)=\binom{n}{a}\binom{n-a}{b}p_i^ap_k^b\Big((1-p_i)^{n-a-b}(1-p_k)^{n-a-b}\Big(1-(1-p_i)^{b}(1-p_k)^{a}
\Big)\\-(n-a-b)p_ip_k(1-p_i)^{n-a-b-1}(1-p_k)^{n-a-b-1}+O(n^2p_i^2p_k^2(1-p_i)^{n-a-b-2}(1-p_k)^{n-a-b-2})\Big)\\=: F_{a,b}(i,k,n)+G_{a,b}(i,k,n)+H_{a,b}(i,k,n).
\end{multline*}
Now we investigate the contributions of the summands to $\sum_{i\neq k}\sum_{a=0}^{j-1}\sum_{b=0}^{j-1}C_{a,b}(i,k,n)$.

\noindent {\sc Analysis of $H_{a,b}$}:
\begin{multline*}
\sum_{i\ne k}\binom{n}{a}\binom{n-a}{b}n^2p_i^{a+2}p_k^{b+2}(1-p_i)^{n-a-b-2}(1-p_k)^{n-a-b-2}\\\le \sum_{i\ge 1}n\binom{n}{a}p_i^{a+2}(1-p_i)^{n-a-b-2}\sum_{k\ge 1}n\binom{n-a}{b}p_k^{b+2}(1-p_k)^{n-a-b-2}\\~\sim~ {\rm const}\,\frac{\big(\var K_j(n)\big)^2}{n^2}~\to~ 0,\quad n\to\infty.
\end{multline*}
Here, ${\rm const}$ denotes a positive constant, the asymptotic equivalence follows from Lemma \ref{cutsum}, and the last limit relation is justified by \eqref{eq:zero_var}.

\noindent {\sc Analysis of $F_{a,b}$}. Assume first that $a\ge 2$ and $b\ge 2$. An application of Bernoulli's inequality $1-(1-x)^m\leq mx$, which holds for $m\in\mn$ and $x\in [0,1]$, yields
\begin{multline*}
F_{a,b}(i,k,n):= \binom{n}{a}\binom{n-a}{b}p_i^ap_k^b(1-p_i)^{n-a-b}(1-p_k)^{n-a-b}\Big(1-(1-p_i)^{b}(1-p_k)^{a}\Big)\\\le
b\binom{n}{a}\binom{n-a}{b}p_i^{a+1}p_k^b(1-p_i)^{n-a-b}(1-p_k)^{n-a-b}+a\binom{n}{a}\binom{n-a}{b}p_i^ap_k^{b+1}(1-p_i)^{n-a-b}(1-p_k)^{n-a-b}\\=:F^{(1)}_{a,b}(i,k,n)+F^{(2)}_{a,b}(i,k,n).
\end{multline*}
By Lemma \ref{cutsum} and \eqref{eq:zero_var},
\begin{multline*}
0\le\sum_{i\neq k}F^{(1)}_{a,b}(i,k,n)\le b\sum_{i\ge 1} \binom{n}{a}p_i^{a+1}(1-p_i)^{n-a-b}\sum_{k\ge1}\binom{n-a}{b}p_k^b(1-p_k)^{n-a-b}\\~\sim~ \text{\rm const}\, \frac{\var K_j(n)}{n} \var K_j(n)=o(\var K_j(n)),\quad n\to\infty.
\end{multline*}
The argument for $F^{(2)}_{a,b}(i,k,n)$ is analogous.

Consider now $F_{a,b}(i,k,n)$ with $a\le 1$ or $b\le 1$. Notice that $F_{0,0}(i,k,n)=0$. Further, $F_{0,1}(i,k,n)
=F_{1,0}(i,k,n)=np_ip_k(1-p_i)^{n-1}(1-p_k)^{n-1}$, whence $F_{0,1}(i,k,n)+G_{0,0}(i,k,n)=0$ and $F_{0,1}(i,k,n)+C^{(2)}_{1,1}(i,k,n)=0$.

For $b\ge2$, using Pascal's rule we obtain
\begin{multline*}
F_{0,b}(i,k,n)+C^{(2)}_{1,b}(i,k,n)\\=\binom{n}{b}p_k^b(1-p_i)^{n-b}(1-p_k)^{n-b}\Big(1-(1-p_i)^{b}\Big)-n\Big(\binom{n}{b}
-\binom{n-1}{b}\Big)p_ip_k^b(1-p_i)^{n-1}(1-p_k)^{n-b}\\=\binom{n}{b}p_k^b(1-p_i)^{n-b}(1-p_k)^{n-b}\Big(1-(1-p_i)^{b}
-bp_i(1-p_i)^{b-1}\Big).
\end{multline*}
Since, for $x\in[0,1]$ and $n\in\mn$, $(1-x)^n+nx(1-x)^{n-1}\le 1$, we conclude that the last expression is nonnegative. Invoking Bernoulli's inequality twice and then Lemma \ref{cutsum} and \eqref{eq:zero_var} we infer
\begin{multline*}
	0\le \sum_{i\ne k} (F_{0,b}(i,k,n)+C^{(2)}_{1,b}(i,k,n))\le \sum_{i\ne k} b\binom{n}{b}p_ip_k^b(1-p_i)^{n-b}(1-p_k)^{n-b}\Big(1-(1-p_i)^{b-1}\Big)\\\le \sum_{i\ne k} b(b-1)\binom{n}{b}p_i^2p_k^b(1-p_i)^{n-b}(1-p_k)^{n-b}\le b(b-1) \sum_{i\ge 1} p_i^2(1-p_i)^{n-b}\sum_{k\ge 1}\binom{n}{b}p_k^b(1-p_k)^{n-b}\\~\sim~ {\rm const}\, \frac{(\var K_j(n))^2}{n^2}\to 0,\quad n\to\infty.
\end{multline*}
Analogously, for $a\ge 2$,
$$\lim_{n\to\infty} \sum_{i\ne k} (F_{a,0}(i,k,n)+C^{(2)}_{a,1}(i,k,n))=0.$$

Consider now $F_{1,1}(i,k,n)=n(n-1)p_ip_k(1-p_i)^{n-2}(1-p_k)^{n-2}(p_i+p_k-p_ip_k)$. The first summand of $F_{1,1}$ satisfies $n(n-1)p_ip_k(1-p_i)^{n-2}(1-p_k)^{n-2}(p_i+p_k)+G_{0,1}(i,k,n)+G_{1,0}(i,k,n)=0$. By Lemma \ref{cutsum} and \eqref{eq:zero_var}, the second summand of $F_{1,1}$ satisfies
\begin{multline*}
	\sum_{i\ne k}n(n-1)p_i^2p_k^2(1-p_i)^{n-2}(1-p_k)^{n-2}\le \sum_{i\ge 1} np_i^2(1-p_i)^{n-2} \sum_{k\ge 1} (n-1)p_k^2(1-p_k)^{n-2}\\~\sim~ {\rm const}\, \frac{(\var K_j(n))^2}{n^2}\to0,\quad n\to\infty.
\end{multline*}
For $b\ge 2$,
\begin{multline*}
F_{1,b}(i,k,n)+G_{0,b}(i,k,n)=	n\binom{n-1}{b}p_ip_k^b(1-p_i)^{n-1-b}(1-p_k)^{n-1-b}\Big(1-(1-p_i)^{b}(1-p_k)\Big)\\-(n-b)\binom{n}{b}p_ip_k^{b+1}
(1-p_i)^{n-1-b}(1-p_k)^{n-1-b}\\=n\binom{n-1}{b}p_ip_k^b(1-p_i)^{n-1-b}(1-p_k)^{n-1-b}\Big(1-(1-p_i)^{b}(1-p_k)-p_k\Big).
\end{multline*}
Since $(1-p_i)^{b}(1-p_k)+p_k\le 1$, the latter expression is nonnegative. By Bernoulli's inequality, Lemma \ref{cutsum} and \eqref{eq:zero_var},
\begin{multline*}
0\le \sum_{i\ne k}(F_{1,b}(i,k,n)+G_{0,b}(i,k,n))\le \sum_{i\ne k} bn\binom{n-1}{b}p_i^2p_k^b(1-p_i)^{n-1-b}(1-p_k)^{n-1-b}\\\le b\sum_{i\ge 1}np_i^2(1-p_i)^{n-1-b}\sum_{k\ge 1}\binom{n-1}{b}p_k^b(1-p_k)^{n-1-b}\\~\sim~ {\rm const}\, \frac{\var K_j(n)}{n} \var K_j(n)=o(\var K_j(n)),\quad n\to\infty.
\end{multline*}
Analogously, for $a\ge 2$,
$$\sum_{i\ne k} (F_{a,1}(i,k,n)+G_{a,0}(i,k,n))=o(\var K_j(n)),\quad n\to\infty.$$

\noindent {\sc Analysis of $G_{a,b}$ (cont)}. We have not investigated $G_{a,b}(i,k,n)$ with $a,b\geq 1$ so far. For such $a,b$, by Lemma \ref{cutsum} and \eqref{eq:zero_var},
\begin{multline*}
\sum_{i\ne k}|G_{a,b}(i,k,n)|=\sum_{i\ne k}(n-a-b)\binom{n}{a}\binom{n-a}{b}p_i^{a+1}p_k^{b+1}(1-p_i)^{n-a-b-1}(1-p_k)^{n-a-b-1}\\\le \sum_{i\ge 1}(n-a-b)\binom{n}{a}p_i^{a+1}
(1-p_i)^{n-a-b-1}\sum_{k\ge 1} \binom{n-a}{b}p_k^{b+1}(1-p_k)^{n-a-b-1}\\~\sim~ {\rm const}\, \var K_j(n) \frac{\var K_j(n)}{n}=o(\var K_j(n)),\quad n\to\infty.
\end{multline*}

\noindent {\sc Analysis of $C^{(2)}_{a,b}$ (cont)}. Observe that $C^{(2)}_{a,b}(i,k,n)=0$ whenever $a=0$ or $b=0$. Thus, we are left with analyzing $C^{(2)}_{a,b}(i,k,n)=0$ for $a,b\geq 2$. Notice that $\binom{n}{b}-\binom{n-a}{b}=O(n^{b-1})$ as $n\to\infty$. Hence, for some constant $C>0$,
\begin{multline*}
\sum_{i\ne k} |C^{(2)}_{a,b}(i,k,n)|
=\sum_{i\ne k}\binom{n}{a}\Big(\binom{n}{b}-\binom{n-a}{b}\Big)p_i^ap_k^b(1-p_i)^{n-a}(1-p_k)^{n-b}\\\le \sum_{i\ge 1}\binom{n}{a}p_i^a(1-p_i)^{n-a}C\sum_{k\ge 1}n^{b-1}p_k^b(1-p_k)^{n-b}~\sim~{\rm const}\, \var K_j(n) \frac{\var K_j(n)}{n}=o(\var K_j(n))
\end{multline*}
as $n\to\infty$. We have used Lemma \ref{cutsum} and \eqref{eq:zero_var} for the asymptotic relations.

Combining all the fragments together we arrive at \eqref{eq:12}. The proof of Proposition \ref{equivvar} is complete.
\end{proof}

With Proposition \ref{equivvar} at hand, we are ready to prove the LIL stated in Theorem \ref{thm:depoiss}.
\begin{proof}[Proof of Theorem \ref{thm:depoiss}.]
We divide the proof into several steps.

\noindent {\sc Step 1}. Under the present assumptions, for $j\in\mn$, $|\me\mathcal{K}_j(n)-\me K_j(n)|\to 0$ and $\var \mathcal{K}_j(n)\sim \var K_j(n)$ as $n\to\infty$ by \eqref{eq:means} and formula \eqref{eq:ratio} of Proposition \ref{equivvar}.

\noindent {\sc Step 2}. Recall that, for $j,n\in\mn$, $\mathcal{K}_j(n)=K_{j}(S_n)$. We are going to prove that
$$\lim_{n\to\infty}\frac{K_{j}(S_n)-K_j(n)}{(\var K_j(n)f(\var K_j(n)))^{1/2}}=0\quad\text{a.s.},$$
where $f(t)=\log t$ under \eqref{eq:deHaan} and \eqref{eq:slowly} and $f(t)=\log\log t$ under the other assumptions of Theorem \ref{thm:depoiss}. This is equivalent to showing that, for all $\varepsilon>0$ and large enough $n_0\in\mn$, $\sum_{n\ge n_0}\1_{A_n(\varepsilon)}<\infty$ a.s., where $$A_n=A_n(\varepsilon):=\Big\{\frac{|K_{j}(S_n)-K_j(n)|}{(\var K_j(n)f(\var K_j(n)))^{1/2}}>\varepsilon\Big\}$$ (the value of $n_0$ is chosen large enough to ensure that $n_0\geq 3$ and $f(\var K_j(n))>0$ for all $n\geq n_0$). For any $\delta>0$ and integer $n\geq 3$, consider the event
$$B_n=B_n(\delta):=\Big\{\frac{|S_n-n|}{(2n\log\log n)^{1/2}}\le 1+\delta\Big\}.$$
Write $$\1_{A_n}=\1_{A_n\cap B_n}+\1_{A_n\cap B_n^c} \le \1_{A_n\cap B_n}+\1_{B_n^c}\quad \text{a.s.},$$
where, as usual, $B_n^c$ is the complement of $B_n$.
According to the LIL for the standard random walk $(S_n)_{n\in\mn}$,
$\sum_{n\ge 3}\1_{B_n^c}<\infty$ a.s. Put $d_n:=(1+\delta)\sqrt{2n\log\log n}$ for $n\geq 3$. We are left with proving that $\sum_{n\ge n_0}\1_{A_n\cap B_n}<\infty$ a.s.
To this end, it is enough to check that $\sum_{n\ge n_0}\mmp(A_n\cap B_n)<\infty$. Observe that
\begin{multline*}
\sum_{n\ge n_0}\mmp(A_n\cap B_n)\le \sum_{n\ge n_0}\mmp\Big\{\frac{K_{j}(n+d_n)-K_j(n)}{(\var K_j(n)f(\var K_j(n)))^{1/2}}>\varepsilon\Big\}\\+\sum_{n\ge n_0}\mmp\Big\{\frac{K_{j}(n)-K_{j}(n-d_n)}{(\var K_j(n)f(\var K_j(n)))^{1/2}}>\varepsilon\Big\}=:I(\varepsilon)+J(\varepsilon).
\end{multline*}
We shall only show that $I(\varepsilon)<\infty$, the proof for $J(\varepsilon)$ being analogous. For $j\in\mn$ and $t\ge 0$, put
$b_j(t):=\me K_j(t)$ and $a_j(t):=\var K_j(t)$. Since, for each $t\geq 0$, $$\sum_{k\ge 1} \eee^{-p_kt}\frac{(p_kt)^{j-1}}{(j-1)!}p_k\leq \sum_{k\ge 1}p_k=1,$$ the series $\sum_{k\ge 1} \eee^{-p_kt}\frac{(p_kt)^{j-1}}{(j-1)!}p_k$ converges uniformly in $t\geq 0$. This fact enables us to differentiate the series termwise:
$$
(b_j(t))^\prime=\sum_{k\ge 1}\Big(1-\eee^{-p_kt}\sum_{i=0}^{j-1}\frac{(p_kt)^i}{i!}\Big)^\prime=\sum_{k\ge 1} \eee^{-p_kt}\frac{(p_kt)^{j-1}}{(j-1)!}p_k=\frac{j\me K^*_j(t)}{t}.
$$
By the mean value theorem for differentiable functions, for $n\geq 3$ and some $\theta_n\in[n,n+d_n]$, $$b_j(n+d_n)-b_j(n)=d_nb_j^\prime(\theta_n).$$
Under the present assumptions, the function $t\mapsto\me K^*_j(t)$, hence also $b_j^\prime$, is regularly varying at $\infty$, see \eqref{eq:meK*0}, \eqref{eq:momtons} and \eqref{eq:varalone}. Since $\lim_{n\to\infty}(\theta_n/n)=1$ we conclude that $b_j^\prime(\theta_n)\sim b_j^\prime(n)$ as $n\to\infty$ by the uniform convergence theorem for regularly varying functions (Theorem 1.5.2 in \cite{Bingham+Goldie+Teugels:1989}) and thereupon
\begin{equation}\label{eq:asymp_a}
b_j(n+d_n)-b_j(n)\sim d_n \frac{j\me K^*_j(n)}{n},\quad n\to\infty.
\end{equation}

Under the assumptions of Theorem \ref{thm:Karlin0}, we infer $$\frac{d_n}{(a_j(n)f(a_j(n)))^{1/2}} \frac{j\me K^*_j(n)}{n}~\sim~ {\rm const}\, \Big(\frac{\ell(n)}{f(\ell(n))}\Big)^{1/2}\Big(\frac{\log\log n}{n}\Big)^{1/2}~\to~ 0 ,\quad n\to\infty$$
having utilized \eqref{eq:var0} and \eqref{eq:meK*0}. Under the assumptions of Theorem \ref{thm:Karlin}, according to \eqref{eq:var} and \eqref{eq:momtons},
\begin{multline}\label{eq:dep2}
\frac{d_n}{(a_j(n)f(a_j(n)))^{1/2}}\frac{j\me K^*_j(n)}{n}=\frac{d_n}{(a_j(n)\log\log a_j(n))^{1/2}}\frac{j\me K^*_j(n)}{n}\\~\sim~ {\rm const}\,n^{(\alpha-1)/2}(L(n))^{1/2}~\to~ 0,\quad n\to\infty
\end{multline}
because $\alpha\in (0,1)$. Under the assumptions of Theorem \ref{thm:Karlin1}, relation \eqref{eq:dep2} still holds whenever $j\geq 2$. Indeed, formula \eqref{eq:raterho} then ensures that $\lim_{n\to\infty} L(n)=0$. For $j=1$, we obtain by virtue of \eqref{eq:varalone} that, as $n\to\infty$,
$$\frac{d_n}{(a_j(n)f(a_j(n)))^{1/2}}\frac{j\me K^*_j(n)}{n}=\frac{d_n}{(a_j(n)\log\log a_j(n))^{1/2}}\frac{j\me K^*_j(n)}{n}~\sim~ {\rm const }\, (\hat{L}(n))^{1/2}~\to~ 0
$$ because $\lim_{t\to\infty} \hat{L}(t)=0$ (see the paragraph preceding Lemma \ref{lem:alone}).

Thus, we have proved that, under the present assumptions, $$\lim_{n\to\infty}\frac{b_j(n+d_n)-b_j(n)}{(a_j(n)f(a_j(n)))^{1/2}}=0.$$
Now we explain that the inequalities $I(\varepsilon)<\infty$ for all $\varepsilon>0$ are secured by the inequalities $$K(\varepsilon):=\sum_{n\ge n_0}\mmp\Big\{\frac{K_{j}(n+d_n)-K_j(n)-b_j(n+d_n)+b_j(n)}{(a_j(n)f(a_j(n)))^{1/2}}>\varepsilon\Big\}<\infty$$
for all $\varepsilon>0$. Indeed, there exists a positive integer $M$ such that $\1_{\big\{\frac{b_j(n+d_n)-b_j(n)}{(a_j(n)f(a_j(n)))^{1/2}}>\varepsilon\big\}}=0$ for $n\geq M+1$. Hence, for all $\varepsilon>0$,
\begin{multline*}
I(2\varepsilon)=\sum_{n\ge n_0} \mmp\Big\{\frac{K_j(n+d_n)-K_j(n)}{(a_j(n)f(a_j(n)))^{1/2}}>2\varepsilon, \frac{b_j(n+d_n)-b_j(n)}{(a_j(n)f(a_j(n)))^{1/2}}>\varepsilon\Big\}\\+\sum_{n\ge n_0} \mmp\Big\{\frac{K_{j}(n+d_n)-K_j(n)}{(a_j(n)f(a_j(n)))^{1/2}}>2\varepsilon, \frac{b_j(n+d_n)-b_j(n)}{(a_j(n)f(a_j(n)))^{1/2}}\le\varepsilon\Big\}\\\le \sum_{n\ge n_0}\1_{\big\{ \frac{b_j(n+d_n)-b_j(n)}{(a_j(n)f(a_j(n)))^{1/2}}>\varepsilon\big\}}\\+\sum_{n\ge n_0} \mmp\Big\{\frac{K_{j}(n+d_n)-K_j(n)}{(a_j(n)f(a_j(n)))^{1/2}}>\varepsilon+ \frac{b_j(n+d_n)-b_j(n)}{(a_j(n)f(a_j(n)))^{1/2}}\Big\}\leq M+K(\varepsilon).
\end{multline*}
To prove that $K(\varepsilon)<\infty$, note that
\begin{multline*}
	X_j^*(n):=K_{j}(n+d_n)-K_j(n)-b_j(n+d_n)+b_j(n)\\=\sum_{k\ge 1}\Big(\1_{\{\pi_k(n)<j,\,\pi_k(n+d_n)\ge j\}}-\me\1_{\{\pi_k(n)<j,\,\pi_k(n+d_n)\ge j\}}\Big).
\end{multline*}
An application of \eqref{eq:ss1} yields, for $\theta \in\mr$,
\begin{multline}\label{eq:depois}
	\me \exp\Big(\frac{\theta X_j^*(n)}{(a_j(n)f( a_j(n)))^{1/2}}\Big)\le \exp\Big(\frac{\theta^2\var X_j^*(n)}{2a_j(n)f( a_j(n))}\eee^{\frac{|\theta|}{(a_j(n)f( a_j(n)))^{1/2}}}\Big)\\\le \exp\Big(\frac{\theta^2(b_j(n+d_n)-b_j(n))}{2a_j(n)f( a_j(n))}\eee^{\frac{|\theta|}{(a_j(n)f( a_j(n)))^{1/2}}}\Big).
\end{multline}
Here, we have used $\var  X_j^*(n)\le b_j(n+d_n)-b_j(n)$ for $n\geq 3$.

Under the assumptions of Theorem \ref{thm:Karlin0}, pick $\theta:=(\log n)^{1+c/2}$ with $c:=\beta/2$ in the setting of \eqref{eq:slowly} and $c:=\lambda/2$ in the setting of \eqref{eq:slowly2}. By Markov's inequality and \eqref{eq:depois}, $$K(\varepsilon)\le \sum_{n\ge n_0}\eee^{-\varepsilon (\log n)^{1+c/2}}\exp\Big(\frac{(\log n)^{2+c}(b_j(n+d_n)-b_j(n))}{2a_j(n)f( a_j(n))}\eee^{\frac{(\log n)^{1+c/2}}{(a_j(n)f( a_j(n)))^{1/2}}}\Big).$$
Recalling \eqref{eq:var0}, \eqref{eq:meK*0} and \eqref{eq:asymp_a}, for any $\Delta\in(0,1/2)$, $$\frac{(\log n)^{2+c}(b_j(n+d_n)-b_j(n))}{2a_j(n)f( a_j(n))}~\sim~ {\rm const}\, \frac{(\log n)^{2+c}(\log\log n)^{1/2}}{n^{1/2}f(\ell(n))}=o(n^{-1/2+\Delta}),\quad n\to\infty.$$ Under \eqref{eq:slowly} $\exp\big(\frac{(\log n)^{1+c/2}}{(a_j(n)f( a_j(n)))^{1/2}}\big)$ diverges to $\infty$ as $n\to\infty$ more slowly than any power, whereas under \eqref{eq:slowly2} it converges to $1$. Hence, there exists a positive integer $N$ such that $$K(\varepsilon)\le N+\eee\sum_{n\ge N} \eee^{-\varepsilon (\log n)^{1+c/2}}<\infty.$$

Under the assumptions of Theorems \ref{thm:Karlin} or \ref{thm:Karlin1}, pick any $\beta\in(0,\alpha/4)$ and put $\theta:=n^{\alpha/4-\beta}$. Appealing to Markov's inequality and \eqref{eq:depois} once again we conclude that
$$
K(\varepsilon)\le \sum_{n\ge n_0}\eee^{-\varepsilon n^{\alpha/4-\beta}}\exp\Big(\frac{n^{\alpha/2-2\beta}(b_j(n+d_n)-b_j(n))}{2a_j(n)f( a_j(n))}\eee^{\frac{n^{\alpha/4-\beta}}{(a_j(n)f(a_j(n)))^{1/2}}}\Big).
$$
Recalling \eqref{eq:var}, \eqref{eq:momtons}, \eqref{eq:varalone} and \eqref{eq:asymp_a} we obtain
$$
\frac{n^{\alpha/2-2\beta}(b_j(n+d_n)-b_j(n))}{a_j(n)f(a_j(n))}\sim\text{\rm const} \frac{n^{(\alpha-1)/2-2\beta}(\log\log n)^{1/2}}{f(n)}~\to~ 0,\quad n\to\infty,
$$
and
$$
\frac{n^{\alpha/4-\beta}}{(a_j(n)f(a_j(n)))^{1/2}}~\sim~ n^{-\alpha/4-\beta}(L^\ast(n)f(n))^{-1/2}~\to~ 0,\quad n\to\infty,
$$
where $L^\ast=\hat L$ if $\alpha=1$ and $j=1$ and $L^\ast=L$ otherwise. Hence, there exists a positive integer $N_1$ such that
$$K(\varepsilon)\le N_1+\eee\sum_{n\ge N_1} \eee^{-\varepsilon n^{\alpha/4-\beta}}<\infty.$$

\noindent {\sc Step 3.} LILs \eqref{eq:LILkar}, \eqref{eq:LILkar1} and \eqref{eq:LILkar2} particularly imply that
$$
\limsup_{n\to\infty}\frac{K_j(n)-\me K_j(n)}{(2\var K_j(n)f(\var K_j(n)))^{1/2}}\le C\quad\text{a.s.},
$$
where the constant $C$ depends on a setting and is equal to the right-hand side of \eqref{eq:LILkar}, \eqref{eq:LILkar1} or \eqref{eq:LILkar2}, respectively.
This taken together with the conclusions of Steps 1 and 2 enables us to conclude that
$$\limsup_{n\to\infty}\frac{\mathcal{K}_j(n)-\me \mathcal{K}_j(n)}{(2\var \mathcal{K}_j(n)f(\var \mathcal{K}_j(n)))^{1/2}}\le C\quad \text{a.s.}$$

It is shown in Lemma \ref{lem:new} that, for any $\delta>0$ and the deterministic sequence $(\tau_n)$ defined in \eqref{eq:tau},
$$\limsup_{n\to\infty}\frac{K_{j}(\tau_n)-\me K_{j}(\tau_n)}{C(2\var K_{j}(\tau_n)f(\var K_j(\tau_n)))^{1/2}}\ge 1-\delta\quad\text{a.s.}$$
The same proof may be used to check that the latter limit relation holds true with $\lfloor \tau_n \rfloor$ replacing $\tau_n$. Combining the resulting inequality with the conclusions of Steps 1 and 2 we obtain
\begin{multline*}
\limsup_{n\to\infty}\frac{\mathcal{K}_{j}(n)-\me \mathcal{K}_{j}(n)}{C(2\var \mathcal{K}_{j}(n)f(\var \mathcal{K}_j(n)))^{1/2}}\geq \limsup_{n\to\infty}\frac{\mathcal{K}_{j}(\lfloor \tau_n\rfloor)-\me \mathcal{K}_{j}(\lfloor \tau_n\rfloor)}{C(2\var \mathcal{K}_{j}(\lfloor \tau_n\rfloor)f(\var \mathcal{K}_j(\lfloor \tau_n\rfloor)))^{1/2}}\\ \ge 1-\delta\quad \text{a.s.}
\end{multline*}
Since the left-hand side does not depend on $\delta$, we infer $$\limsup_{n\to\infty}\frac{\mathcal{K}_{j}(n)-\me \mathcal{K}_{j}(n)}{(2\var \mathcal{K}_{j}(n)f(\var \mathcal{K}_j(n)))^{1/2}}\geq C\quad\text{a.s.}$$
\end{proof}

\noindent {\bf Acknowledgement}.
The authors thank the two anonymous referees for useful comments which helped to improve the presentation, and also Lutz Mattner for a pointer to \cite{Volkova:1996}. DB was supported by the National Science Center, Poland (Opus, grant number 2020/39/B/ST1/00209). The research of AI was supported by UC Berkeley Economics/Haas in the framework of the U4U program. VK acknowledges the support by the National Science Center, Poland under the scholarship programme of the Polish National Science Center for students and researchers from Ukraine without a PhD degree, carried out within the framework of the Basic Research Programme under the 3rd edition of the EEA and Norway Grants 2014-2021.

\end{document}